\renewcommand{\epsilon}{\varepsilon}
\newcommand{\C}{\mathbb{C}}
\renewcommand{\H}{\mathbb{H}}
\newcommand{\D}{\mathbb{D}}
\newcommand{\R}{\mathbb{R}}
\newcommand{\Z}{\mathbb{Z}}
\newcommand{\N}{\mathbb{N}}
\renewcommand{\P}{\mathbb{P}}
\newcommand{\alfab}{\mathscr A^ \mathbb Z}
\DeclareMathOperator{\id}{Id}
\DeclareMathOperator{\aff}{Aff}
\theoremstyle{plain}
\newtheorem{theo}{Theorem}[section]
\newtheorem{lemma}[theo]{Lemma}
\newtheorem{prop}[theo]{Proposition}
\theoremstyle{definition}
\newtheorem{defin}[theo]{Definition}
\newtheorem{rk}[theo]{Remark}
\begin{document}
\begin{otherlanguage}{english}
\titlepage

\begin{otherlanguage*}{french}
\begin{flushleft}
ECOLE POLYTECHNIQUE\\
Master 1\\
PASQUINELLI Irene
\end{flushleft}

\vspace*{\fill}
\begin{minipage}{1\textwidth}
\textbf{
\begin{center}
{\large RAPPORT DE STAGE DE RECHERCHE}\\
\bigskip
{\huge \textit{Cutting sequences in \\ Veech surfaces}}\\
\noindent\makebox[\linewidth]{\rule{\textwidth}{.4pt}}\\
\mbox{}\\
\underline{\textit{NON CONFIDENTIEL}}\\
PUBLICATION 
\end{center}
}
\end{minipage}
\vfill

\begin{flushleft}
\underline{Option}: Département de MATHEMATIQUES\\
\underline{Champ de l'option}: Systèmes Dynamiques\\
\underline{Directeur de l'option}: Charles Favre\\
\underline{Directeur de stage}: Corinna Ulcigrai\\
\underline{Dates du stage}: 08 avril -- 31 ao\^ut 2013\\
\underline{Nom et adresse de l'organisme}:\\
University of Bristol\\
School of mathematics\\
University Walk\\
BS8 1TW\\
Bristol\\
United Kingdom
\end{flushleft}
\end{otherlanguage*}

\clearpage

\begin{abstract}

A cutting sequence is a symbolic coding of a linear trajectory on a translation surface corresponding to the sequence of sides hit in a polygonal representation of the surface.

We characterize cutting sequences in a regular hexagon with opposite sides identified by translations exploiting the same procedure used by Smillie and Ulcigrai in \cite{octagon} for the regular octagon.

In the case of the square, cutting sequences are the well known Sturmian sequences. We remark the differences between the procedure used in the case of the square and the one used in the cases of the regular hexagon and regular octagon. We also show how to adapt the latter to work also in the case of the square, giving a new characterization for this case.

We also show how to create a dictionary to pass from the symbolic coding with respect to the hexagon to the symbolic coding with respect to the parallelogram representing it  in the space of flat tori.

Finally we consider the Bouw-M\"oller surfaces $\mathscr M(3,4)$ and $\mathscr M(4,3)$ and we use their semi-regular polygons representations to prove our main result, which is a theorem analogous to the central step used to characterize cutting sequences in the previous cases.

\end{abstract}

\begin{otherlanguage}{french}
\begin{abstract}

Une suite de codage est un code symbolique d'une trajectoire linéaire dans une surface de translation et elle correspond à la succession des c\^otés croisés dans une représentation polygonale de la surface.

Nous caractérisons les suites de codage dans un hexagone régulier avec c\^otés parallèles identifiés par des translations en utilisant la m\^eme procédure que celle présentée par Smillie et Ulcigrai dans \cite{octagon}. 

Dans le cas du carré, les suites de codage sont les bien connues suites de Sturm. Nous montrons les différences entre la procédure utilisée dans le cas du carré et celle utilisée pour les cas de l'hexagone régulier et de l'octogone régulier. Nous montrons aussi comment adapter la deuxième pour l'appliquer au cas du carré aussi et nous donnons donc une nouvelle caractérisation dans ce cas.

Nous montrons aussi comme créer un dictionnaire pour passer du code symbolique lié à l'hexagone à celui lié au parallélogramme qui représente l'hexagone dans l'espace des tores plats.

Enfin nous considérons les surfaces de Bouw-M\"oller $\mathscr M(3,4)$ et $\mathscr M(4,3)$ et nous utilisons ses représentations en polygones semi-réguliers pour démontrer notre résultat principal, qui est un théorème analogue à l'étape centrale de la démonstration de la caractérisation des suites de codage dans les cas précédents.

\end{abstract}
\end{otherlanguage}

\clearpage

\tableofcontents

\section*{Introduction}

This work deals with cutting sequences of linear trajectories in
translation surfaces.

A translation surface, precisely defined in Paragraph
\ref{Veech}, is obtained by polygons, gluing pairs of parallel
sides by translations. In the representation of the surface by
polygons, linear trajectories are nothing else but straight lines
in the interior of the polygons. When the line hits one side, it
comes out from the corresponding point of the glued side.

We can code trajectories by labelling each pair of identified sides
with a letter and by recording the sequence of letters of sides
hit by the trajectory. Supposing a trajectory does not hit the
vertices, this gives us a bi-infinite sequence of letters in the
alphabet we are using. This is called the \emph{cutting sequence}
of the trajectory.

We want to try to understand which information about the
trajectory we can recover from the cutting sequence. For example,
can we characterize the cutting sequences in the space of
bi-infinite sequences? In other words, given a bi-infinite
sequence is it possible to find out whether it is a cutting
sequence of a certain trajectory or not? And if it is, can we
recover the direction of such trajectory?

The problem has already been studied in many cases. The first well
known and long studied one is a square with opposite sides
identified, treated for example by Caroline Series
in~\cite{square}. Since the square tessellates the plane, the
problem is equivalent to coding straight lines in the plane with
respect to a horizontal and vertical grid. The cutting sequences
obtained in this way are called \emph{Sturmian sequences} and
appear in various areas of mathematics.

The second case studied was the one of the regular octagon,
together with other regular $2n$-gons for $n \geq 4$, described by
John Smillie and Corinna Ulcigrai in \cite{octagon}. There, they
explain that in this case, characterizing cutting sequences is a
bit more complicated but still possible, in a similar way to
Sturmian sequences.

The third case is the one of regular pentagon and in general
regular odd-gons. In this case, since there are not anymore
opposite parallel sides to glue, Veech originally used two copies
of them glued together to build a translation surface. In
\cite{pentagon} Diana Davis analysed this case and showed that the
same procedure as Smillie and Ulcigrai can be applied for the
double regular pentagon and the general case of double regular
odd-gons.

The first part of my work, described in Section \ref{first}, was
to consider the case of the hexagon, ignored until now because by
gluing the opposite parallel sides we obtain a torus, and hence
the same kind of surface as in the case of the square. This case
turns out to be exactly as the case of the octagon and in the
first part of this work we explain the procedure used first in
\cite{octagon}, applying it to the case of the hexagon.

In the first part \ref{second} of the second section of this work,
we compare the methods used for the square and for the hexagon.
Even if the general method is the same, there is a slight
difference between definitions and tools used in the case of the
hexagon (and octagon and pentagon) and the one used in
\cite{square} for the square. We analyse the differences between
the two procedures and we show that the first method can be
adapted to work also for the square and give an other
characterization of Sturmian sequences.

Then, in the second part of the second section (see \ref{third})
since the hexagon and the square in the space of flat tori are not
the same surface, we explain the link between symbolic coding for
the hexagon and the symbolic coding for the correspondent
parallelogram in the space of flat tori.

In the third part of this work, described in Section \ref{fourth}
we analyse a particular surface of a new family of translation
surfaces, the Bouw-M\"oller surfaces, introduced by Irene Bouw
and Martin M\"oller in \cite{BM} and described by Pat Hooper in
\cite{Hooper} as semi-regular polygons glued together.

The problem of studying cutting sequences on these surfaces is
still open, although some first results are obtained by Diana
Davis in \cite{BMDavis}.

Bouw-M\"oller surfaces share an important property with surfaces
obtained by gluing regular polygons: they are all Veech surfaces. The
idea is that a Veech surface is a translation surface which has plenty of affine
symmetries. More precisely, this means that it is a translation
surface whose Veech group (defined in \ref{Veech}) is a lattice in
$SL(2,\R)$. In particular, this means that the quotient of the
space of deformations under its action is a hyperbolic surface
with finite area.

The general idea to characterize cutting sequences is to define a
combinatorial operation on words called \emph{derivation} and show
that cutting sequence are infinitely derivable. The proof is based
on the fact that we can cut and paste the original polygon to
obtain a new presentation as a sheared polygon and show that the
cutting sequence of the same trajectory with respect to the new
sides is exactly the derived sequence of the original sequence. By
sending the sheared polygon back in the original one with a shear,
we obtain a new trajectory. We can repeat the argument and find
out that the derived sequence of a cutting sequence is still a
cutting sequence and hence cutting sequences are infinitely derivable.

Finally, it is well known that the characterization of Sturmian
sequences is intimately connected with the geodesic flow on the
modular surface, as explained by Caroline Series in
\cite{squarehyp}. Similarly, in the case of the octagon and of the
pentagon, as well as in the case of the hexagon, the
characterization is connected with the Teichm\"uller geodesic flow
on the Teichm\"uller disk, as shown from Smillie and Ulcigrai in
\cite{octagonteich} for the octagon and here in Paragraph
\ref{teich} for the hexagon.

This and the brief outline of the central observation, show that
we deeply use the Veech group and hence the property of being a
Veech surface. This is why the case of Bouw-M\"oller surfaces can
be treated similarly to the previous ones.

\newpage

\section{Cutting sequences for the hexagon}\label{first}

\subsection{Main definitions}

Following \cite{octagon} we can provide the same construction for
the hexagon. Consider the hexagon $E$ centred in the origin of
the axis and with one edge parallel to the horizontal axis. We now
label the edges as follow: let's call $A$ the horizontal sides and
continue labelling $B$ and $C$ clockwise. By gluing the opposite
parallel pairs of sides by translations, we obtain a Veech
surface that we will call $S_E$, of genus 1, with two fake
singularities given by the vertices. Let us consider now a linear trajectory in
the surface. Obviously, it projects to a straight line in the
hexagon. For simplicity, we suppose to have just bi-infinite
trajectories, never hitting the fake singularities, which means
that our trajectories never hit vertices of the hexagon.

We can now \emph{code} the trajectory by writing down the letters
of the sides hit by the trajectory. If we consider then the set of
all labels as an alphabet $\mathscr A =\{A,B,C\}$, we have that
the sequence of labels is a bi-infinite word in the alphabet, that
we will call \emph{cutting sequence} and denote as $c(\tau)$ where
$\tau$ is the trajectory.

\subsubsection{Transition diagrams}

We try now to study the cutting sequences of such a surface.

As for the octagon, up to applying a rotation of angle $\pi$ we
can consider $\theta \in [0,\pi]$ because this leaves the labels
unchanged.

The symmetries of the hexagon suggests that we can divide
$[0,\pi]$ in the sectors
$\Sigma_i=[\frac{i\pi}{6},\frac{(i+1)\pi}{6}]$ for $i=0, \dots, 5$
and consider the element of $D_6$ which sends the sector
$\Sigma_i$ back in $\Sigma_0$. They are expressed by the following
matrices:

\begin{align*}
\nu_0&=\id, & \nu_1 &=\rho_{\frac{\pi}{6}} \circ r_1 \circ
\rho_{-\frac{\pi}{6}}=
\begin{pmatrix}
\frac{1}{2} & \frac{\sqrt{3}}{2} \\
\frac{\sqrt{3}}{2} & -\frac{1}{2}
\end{pmatrix},
\\
\nu_2&=\rho_{-\frac{\pi}{3}}=
\begin{pmatrix}
\frac{1}{2} & \frac{\sqrt{3}}{2} \\
-\frac{\sqrt{3}}{2} & \frac{1}{2}
\end{pmatrix},
&\nu_3&=\rho_{-\frac{\pi}{6}} \circ r_2 \circ
\rho_{\frac{\pi}{6}}=
\begin{pmatrix}
-\frac{1}{2} & \frac{\sqrt{3}}{2} \\
\frac{\sqrt{3}}{2} & \frac{1}{2}
\end{pmatrix},
\\
\nu_4&=\rho_{-\frac{2 \pi}{3}}=
\begin{pmatrix}
-\frac{1}{2} & \frac{\sqrt{3}}{2} \\
-\frac{\sqrt{3}}{2} & -\frac{1}{2}
\end{pmatrix},
&\nu_5&=r_2=
\begin{pmatrix}
-1 & 0 \\
0 & 1
\end{pmatrix}.
\end{align*}

Note that we call $\rho_\theta$ the counter-clockwise rotation of
angle $\theta$ and we call $r_1$ and $r_2$ the reflections through
respectively the horizontal axis and the vertical axis.

They act on the labels (and hence on the cutting sequences) by
\emph{permuting} the labels and we can write explicitly the
permutations that they induce:

\begin{align*}
&\pi_0=\id, &\pi_1=(AC), \\
&\pi_2=(ABC), &\pi_3=(BA), \\
&\pi_4=(ACB), &\pi_5=(BC).
\end{align*}

We want now to try to understand which words in the alphabet can
be cutting sequences and which ones cannot.

First of all, we check which transitions can occur in a cutting
sequence, i.e. which couple of letters can be found one after the
other in a cutting sequence.

What we do is to fix a sector, and to try to understand (and
represent in a graph) where else can a trajectory in that range of
directions go, if it just hit a certain side. As we saw, it is
enough to study transitions for trajectories with direction in
$\Sigma_0$ because the others will be obtained by permuting the
labels.

\begin{figure}[h]
\centering
\includegraphics[width=0.3\textwidth]{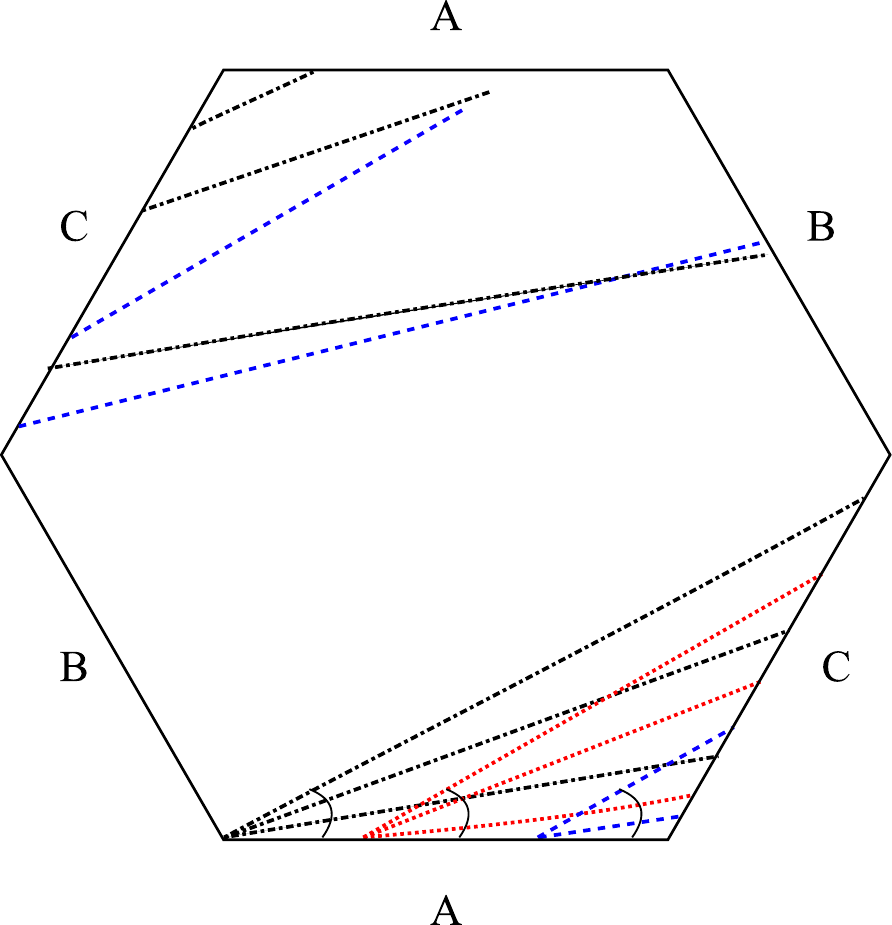}
\caption{Diagram $\mathscr{D}_0$} \label{D0}
\end{figure}

To describe them, we construct the related transition diagram
$\mathscr{D}_0$. A cutting sequence will then give a bi-infinite
path in the diagram.

We start from the bottom horizontal side labelled $A$ and we see in
Figure \ref{D0} that all possible trajectories in a direction in
$\Sigma_0$ necessarily go through $C$ and so on, obtaining the
following diagram.

\[
\mathscr D_0 \qquad \qquad  \xymatrix{ A \ar@/^1pc/[r] & C
\ar@/^1pc/[l] \ar@/^1pc/[r]
 & B \ar@/^1pc/[l] \ar@(ur,dr) }
\]

Similarly, we can construct all the diagrams for the other sectors
of directions by applying the permutations on the labels induced
by the action on the labels of the previous matrices.

The diagrams become hence:

\begin{align*}
\mathscr D_0 \qquad  &\xymatrix{ A \ar@/^1pc/[r] & C \ar@/^1pc/[l]
\ar@/^1pc/[r]
 & B \ar@/^1pc/[l] \ar@(ur,dr) }
&\mathscr D_1 \qquad  &\xymatrix{ C \ar@/^1pc/[r] & A
\ar@/^1pc/[l] \ar@/^1pc/[r]
 & B \ar@/^1pc/[l] \ar@(ur,dr) } \\ \\
\mathscr D_2 \qquad  &\xymatrix{ C \ar@/^1pc/[r] & B \ar@/^1pc/[l]
\ar@/^1pc/[r]
 & A \ar@/^1pc/[l] \ar@(ur,dr) }
&\mathscr D_3 \qquad  &\xymatrix{ B \ar@/^1pc/[r] & C
\ar@/^1pc/[l] \ar@/^1pc/[r]
 & A \ar@/^1pc/[l] \ar@(ur,dr) } \\ \\
\mathscr D_4 \qquad  &\xymatrix{ B \ar@/^1pc/[r] & A \ar@/^1pc/[l]
\ar@/^1pc/[r]
 & C \ar@/^1pc/[l] \ar@(ur,dr) }
&\mathscr D_5 \qquad  &\xymatrix{ A \ar@/^1pc/[r] & B
\ar@/^1pc/[l] \ar@/^1pc/[r]
 & C \ar@/^1pc/[l] \ar@(ur,dr) } \\
\end{align*}

\begin{rk}\label{commutation}
Since each matrix induces a permutation of labels as we just saw, we also have an action of each $\pi_i$ on the space of
bi-infinite words $\alfab$. Moreover, when passing to cutting
sequences, this action commutes with the action of the
corresponding matrices on the trajectory. In facts, for each $0
\leq k \leq 5$ we have
\[
c(\nu_k \tau)=\pi_k \cdot c(\tau).
\]
\end{rk}

By mapping a trajectory in direction $\theta \in [0,\pi]$ to $\Sigma_0$ with one of the $\nu_i$'s and up to permuting the labels which appear in the cutting sequence, we can assume that $\theta \in [0,\frac{\pi}{6}]$.

We will often need to consider a trajectory in some direction
$\theta \in [0,\pi]$ and \emph{send it back} to a trajectory with
direction in $\Sigma_0$ in order to simplify exposition and prove
our results using just our first sector. We will then often consider
the \emph{normal form} of a trajectory defined by

\begin{defin}\label{normaltraj}
The normal form of a trajectory in direction $\theta \in \Sigma_k$
is the trajectory
\[
n(\tau)=\nu_k \tau.
\]
\end{defin}

\subsubsection{Derivation}

Our aim is to give conditions under
which a word in the alphabet is or is not a cutting sequence.
First of all we can see that since we constructed transition
diagrams describing all possible transitions in a cutting
sequence, we have that all cutting sequences can be described as a
path in one of the diagrams $\mathscr D_i$.

To state this more precisely, we need to define that

\begin{defin}
A word in the alphabet is admissible if it describes a bi-infinite
path in one of the $\mathscr D_i$, i.e. if the transitions
appearing in the word are all allowed in one of the diagrams. In
that case we will say that the word is admissible in the diagram
$i$.
\end{defin}

Therefore, a necessary condition for cutting sequences is the following:

\begin{lemma}\label{csadmissible}
If $w \in \alfab$ is a cutting sequence, it is admissible in some
diagram $\mathscr D_k$.
\end{lemma}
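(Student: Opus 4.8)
The plan is to reduce the statement to the construction of the transition diagrams $\mathscr D_i$ that was carried out above. Recall that we showed that for any direction $\theta \in [0,\pi]$ there is a sector $\Sigma_k$ containing $\theta$, and that by applying the matrix $\nu_k$ we send $\theta$ into $\Sigma_0 = [0,\frac{\pi}{6}]$. So I would begin by taking an arbitrary cutting sequence $w = c(\tau)$ of some bi-infinite trajectory $\tau$ that avoids the fake singularities, and letting $k$ be the index with $\theta \in \Sigma_k$, where $\theta$ is the direction of $\tau$.

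The key step is then to look at the normal form $n(\tau) = \nu_k \tau$, whose direction lies in $\Sigma_0$. For a trajectory with direction in $\Sigma_0$, the whole point of the case analysis leading to Figure \ref{D0} and the diagram $\mathscr D_0$ is precisely that \emph{every} transition between consecutive sides crossed by such a trajectory is one of the arrows of $\mathscr D_0$: after crossing a side labelled $A$ the trajectory must next cross $C$, after $C$ it crosses $B$ or $A$, and after $B$ it crosses $B$ or $C$. Hence $c(n(\tau))$ describes a bi-infinite path in $\mathscr D_0$, i.e. it is admissible in diagram $0$. Now I invoke Remark \ref{commutation}: $c(n(\tau)) = c(\nu_k \tau) = \pi_k \cdot c(\tau) = \pi_k \cdot w$. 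Finally I use that, by construction, the diagram $\mathscr D_k$ is obtained from $\mathscr D_0$ by relabelling vertices according to the permutation $\pi_k$ (that is how the list $\mathscr D_0, \dots, \mathscr D_5$ was produced); equivalently, $\pi_k^{-1}$ carries paths in $\mathscr D_0$ to paths in $\mathscr D_k$. Since $\pi_k \cdot w$ is a path in $\mathscr D_0$, applying $\pi_k^{-1}$ shows $w$ is a path in $\mathscr D_k$, so $w$ is admissible in $\mathscr D_k$, which is the claim.

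The only genuine content to be nailed down — and thus the main obstacle — is the assertion that for directions in $\Sigma_0$ the transitions are exactly the arrows of $\mathscr D_0$. This is the geometric step hidden in the sentence "we see in Figure \ref{D0} that all possible trajectories in a direction in $\Sigma_0$ necessarily go through $C$," and to make it rigorous one would verify, side by side and edge by edge, which edge of the hexagon a line of slope in $[0,\tan\frac{\pi}{6}]$ exiting through a given edge can next meet, using the identification of opposite sides. Everything else — the reduction via $\nu_k$, the commutation relation, the relabelling of diagrams — is bookkeeping that follows formally from statements already established in the excerpt (Definition \ref{normaltraj}, Remark \ref{commutation}, and the explicit construction of the $\mathscr D_i$). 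I would therefore present the proof compactly, citing Figure \ref{D0} for the $\Sigma_0$ case and Remark \ref{commutation} for the passage to a general sector, and not belabour the elementary trigonometric check.
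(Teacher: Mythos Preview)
Your proposal is correct and follows the paper's approach. The paper does not give a separate proof of this lemma; it treats it as an immediate consequence of the construction of the transition diagrams (``since we constructed transition diagrams describing all possible transitions in a cutting sequence, we have that all cutting sequences can be described as a path in one of the diagrams $\mathscr D_i$''), and your argument simply unpacks that construction explicitly via the normal form and Remark~\ref{commutation}.
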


In order to get a characterization, one needs a stronger condition
(see Proposition \ref{central} in the next section). In order to
define it, let us first introduce a combinatorial operation on
admissible words.

\begin{defin}
Given a letter $L$ in a word $w \in \alfab$, we say it is
sandwiched if the previous and the following letters are the same
letter $L'$.
\end{defin}

The operation on words is hence:

\begin{defin}
If $w \in \alfab$ is an admissible word, its derived sequence $w'
\in \alfab$ is the word obtained from $w$ keeping all sandwiched
letters and dropping the others.
\end{defin}

\begin{rk}
The condition of admissibility for $w$ is necessary for having a
bi-infinite word. In fact, one can easily check that starting from
every point of a diagram and considering all possible paths, after a finite
number of steps one continues to meet new sandwiched letters. Bi-infinite admissible sequences will then give
bi-infinite derived sequences.

For cutting sequences the admissibility condition is automatically satisfied.
\end{rk}

\begin{rk}\label{invariance}
It will also be useful to notice that the property of being
sandwiched for letters is invariant with respect to permutations.
Precisely:
\[
(\pi \cdot w)'=\pi \cdot w'
\]
where $\pi$ is a permutation.
\end{rk}

We also have a similar construction of a \emph{normal form} as in
\ref{normaltraj} for a certain subset of sequences. In fact we can
define

\begin{defin}\label{normalword}
If $w$ is a word admissible in just one diagram $\mathscr D_k$ for
a certain $k \in \{0,\dots,5\}$, the normal form of $w$ is the
word $n(w)=\pi_k \cdot w$.
\end{defin}

The concept of derivation on sequences allows us to describe an
important subset of sequences:

\begin{defin}
$w \in \alfab$ is derivable if it is admissible and its derived
sequence is still admissible.
\end{defin}

And recursively we have also

\begin{defin}
$w \in \alfab$ is infinitely derivable if it is derivable and
deriving $n$ times we obtain always a derivable sequence, for all
$n$.
\end{defin}

As we will state precisely and prove in the following section, all cutting sequences are
infinitely derivable. We will prove that the derived sequence of a
cutting sequence is again a cutting sequence.

\subsubsection{Veech group}\label{Veech}

What we still need in order to state and prove the condition is to
recall the definition of the affine automorphism group and of the Veech
group. Following the novel approach of \cite{octagon}, we will
allow orientation-reversing affine automorphisms.

We recall following \cite{transurf} that a translation surface is
a finite union (just one in this case) of polygons with
identifications of pairs of parallel sides such that

\begin{itemize}
\item the boundaries of the polygons are oriented so that the
polygon lies to the left;
\item Each side is glued to another parallel side of same length
by translation so that the two sides have opposite orientations.
\end{itemize}

Using this definition our surface $S_E$ is naturally a translation
surface.

Equivalently, a translation surface is a surface $S$ with a set of
singularities $\Sigma$ such that $S \setminus \Sigma$ is covered
by an atlas $\{U_\alpha,\phi_\alpha\}$ for which the changes of
charts on the intersections are translations:
\[
\phi_\alpha \phi_\beta^{-1}(v)=v+c.
\]

The singular points are cone singularities which possibly arises
from the vertices of the polygons (in our case they are just fake
singularities). The surface inherits a flat metric which allows us
to talk about trajectories, which are geodesics with respect to
the metric.

Using this second definition, we can state how to deform a
translation surface:

\begin{defin}
Given $\eta \in GL(2,\R)$ the deformation of a translation
surface $S$ by $\eta$ is the translation surface $S'$ obtained by
applying $\eta$ to the images under the charts, which means considering the atlas $\{U_\alpha, \eta \cdot \phi_\alpha\}$ and fixing the singularities.

If we use the first definition, we can consider the surface
obtained by gluing the images of the sides of the polygons after
applying $\eta$ to the polygons themselves.

A deformation of $S$ by $\eta$ is hence the natural application
\[
\Phi_\eta \colon S \to S'=\eta \cdot S
\]
and with abuse of notations we will sometimes refer to such a map by writing deformation of $S$.
\end{defin}

\begin{rk}
Obviously, it sends linear trajectories $\tau$ in $S$ to linear
trajectories $\eta \tau$ in $S'$ and the set of singularities in
the set of singularities.
\end{rk}

We just explained how to associate a matrix in $GL(2,\R)$ to a
deformation of $S$ which is a particular affine diffeomorphism:

\begin{defin}
An affine diffeomorphism $\Psi$ between $S$ and $S'$ translation
surfaces is a homeomorphism which preserve the set of
singularities i.e. sends $\Sigma$ in $\Sigma'$ andoutside  is a
diffeomorphism such that the derivative $D\Psi_p$ at the
point $p \in S$ between the tangent spaces does not depend on $p$.
\end{defin}

In fact the derivative of the deformation $\Phi_\eta$ is the
matrix $\eta$ in every point of $S \setminus \Sigma$.

Conversely, to each affine diffeomorphism, we can associate, by a map that we
call $V$, a
matrix in $GL(2,\R)$ as the derivative $D\Psi$ .

We will say that $S$ and $S'$ are affinely equivalent if it exists
an affine diffeomorphism between them, isometric if it exists such
affine diffeomorphism and its derivative is in $O(2)$ and translation
equivalent if there exists such an affine diffeomorphism $\Upsilon$ and its
derivative is the identity. In the last case, the map $\Upsilon$, up to cutting the
polygons composing $S$ in smaller pieces, can be defined locally
on each piece as a translation in order to give out the polygons
composing $S'$.

In the case $S=S'$ the affine diffeomorphisms are called affine
automorphisms  and form a group we will call $\aff(S)$.

Let us now consider $V$ that is called the \emph{Veech homomorphism}
for $S=S'$

\begin{align*}
V \colon \aff(S) &\to GL(2,\R) \\
\Psi &\mapsto D\Psi.
\end{align*}

Naturally the image is in $SL_\pm(2,\R)$, which is the subgroup of
matrices with determinant $\pm 1$ and we will call such image the
\emph{Veech group} of $S$ and write $V(S)$. Practically speaking
it is the set of derivatives of affine automorphisms of a surface.

Traditionally, the Veech group is considered to be the
intersection of such a group with $SL(2,\R)$ i.e. the
orientation-preserving elements that we will call $V^+(S)$.
Moreover, sometimes it is considered to be the projection of such
a group in $PSL(2,\R)$ which we will call $V_P^+(S)$ while the
projection in $PGL(2,\R)$ will be denoted by $V_P(S)$.

The Kernel of $V$ is the set of translation equivalences of $S$
and it is trivial if and only if given the derivative, the affine
automorphism is automatically determined because injectivity
guarantees that we can't have two preimages. In that case the most
classical example of affine automorphism is hence the unique one
and it can be described as follow: given $S$ as identified
polygons and its derivative $\eta$ we can compose the canonical
map described above $\Phi_\eta \colon S \to \eta \cdot S$ with a
translation equivalence $\Upsilon \colon \eta \cdot S \to S$
uniquely determined by the derivative if the kernel of $V$ is
trivial. We obtain an affine equivalence $\Psi_\eta \colon S \to
S$.

An other useful way to describe the Veech group of our surface
$S_E$ is to consider the map

\begin{align*}
h \colon SL(2,\R) &\to SL(2,\R) \cdot S_E \\
A &\mapsto A \cdot S_E.
\end{align*}

It's kernel is the set $V(S_E)=\{A \colon A \cdot S_E \doteq
S_E\}$ where "$\doteq$" means that the two surfaces can be
obtained one from the other by a cut and paste map which actually
is a translation equivalence. This means that each time we apply
an element of the Veech group to the hexagon we will always have a
cut and paste map which sends the new polygon back to the previous
one and hence this shows the equivalence with the previous
definition.

Veech proved in \cite{veechlattice} that the
orientation-preserving Veech group of each translation surface
obtained by gluing regular polygons is a lattice in $SL(2,\R)$
which naturally means that our Veech group $V(S)$ is a lattice in
$SL_\pm(2,\R)$. Surfaces satisfying such property are called Veech surfaces.

But we can say something more about the structure of the Veech
group of our surface. First of all we have that each element of
the dihedral group $\eta \in D_6$ determines an affine
automorphism as explained before $\Psi_\eta \colon S_E \to S_E$
whose derivative is exactly $\eta$ which means that the dihedral
group is a subgroup of the Veech group. In particular, we have the
reflection with respect to the horizontal axis and the reflection
with respect to the straight line forming angle $\frac{\pi}{6}$
whith the horizontal axis:
\[
\alpha= \begin{pmatrix}
1 & 0 \\
0 & -1
\end{pmatrix}
\qquad \qquad \beta= \begin{pmatrix}
\frac{1}{2} & \frac{\sqrt 3}{2} \\
\frac{\sqrt 3}{2} & -\frac{1}{2}
\end{pmatrix}
\]
and their counterpart as affine automorphisms $\Psi_\alpha$ and
$\Psi_\beta$.

By adapting the proofs of the orientation-preserving case exactly
as for the octagon in \cite{octagon} we can describe precisely the Veech group for the hexagon and state the following:

\begin{lemma}
The kernel of $V$ from $\aff(S_E)$ to $GL(2,\R)$ is trivial.

Moreover, the affine automorphisms group $\aff(S_E)$ is generated by
$\Psi_\alpha$, $\Psi_\beta$ and $\Psi_\gamma$ and hence the Veech
group is generated by their derivatives $\alpha$, $\beta$,
$\gamma$, where $\gamma = \begin{pmatrix}
-1 & 2 \sqrt 3 \\
0 & 1
\end{pmatrix}$, the Veech shear described in the next
paragraph.
\end{lemma}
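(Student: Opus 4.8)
The plan is to follow the octagon argument of \cite{octagon} step by step, adapting each piece to the hexagon. First I would establish that the kernel of $V$ is trivial. Since $S_E$ is a genus-one surface built from a single hexagon with two marked (fake) singularities, a translation equivalence $\Upsilon\colon S_E\to S_E$ with derivative the identity is determined, up to translation, by where it sends one point; the only subtlety is that it must permute the two fake singularities. One checks directly that no non-trivial translation of the flat torus underlying $S_E$ fixes the pair of marked points, because the two fake singular points are not related by a translation that is itself a symmetry of the hexagonal identification (equivalently, the holonomy/period lattice together with the marked-point data leaves no room for a non-trivial deck-type translation). Hence $\ker V = \{\id\}$, and consequently every affine automorphism is uniquely recovered from its derivative via the canonical construction $\Psi_\eta = \Upsilon\circ\Phi_\eta$ described just before the statement.

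Next I would identify the image $V(S_E)\subset SL_\pm(2,\R)$. Triviality of the kernel means $V$ is injective, so $\aff(S_E)\cong V(S_E)$ and it suffices to pin down the group of matrices. The dihedral group $D_6$ sits inside $V(S_E)$ because each $\eta\in D_6$ is a Euclidean symmetry of the hexagon, hence induces an affine automorphism with derivative $\eta$; in particular $\alpha$ and $\beta$ (the two reflections listed) lie in $V(S_E)$ and generate the image of $D_6$. Separately, the parabolic element $\gamma=\left(\begin{smallmatrix}-1 & 2\sqrt3\\ 0 & 1\end{smallmatrix}\right)$ is realized by the Veech shear: cutting the hexagon into pieces, applying the shear, and reassembling gives a polygon translation-equivalent to the original one, so $\Psi_\gamma\in\aff(S_E)$ (this is the content of the "cut and paste" description of the Veech group via the map $h$). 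Thus $\langle \alpha,\beta,\gamma\rangle\subseteq V(S_E)$.

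For the reverse inclusion I would invoke Veech's theorem (\cite{veechlattice}), already quoted in the excerpt, that $V^+(S_E)$ is a lattice in $SL(2,\R)$, and compute its signature. The hexagon surface is the $(2,3,\infty)$-triangle-group case: the orientation-preserving Veech group $V^+(S_E)$ is (conjugate to) the Hecke group $H_6$, a lattice whose fundamental domain is the $(2,3,\infty)$ hyperbolic triangle. A standard Poincaré-polygon / fundamental-domain argument then shows $V^+(S_E)$ is generated by the elliptic element of order $2$ (rotation by $\pi$), the elliptic element of order $3$ (coming from the order-$6$ rotation symmetry of the hexagon, whose square has order $3$ in $PSL_2$), and the parabolic $\gamma$; allowing orientation-reversing elements adjoins one reflection, and all of these are expressible in $\langle\alpha,\beta,\gamma\rangle$ (the rotations are words in $\alpha,\beta$, being products of two reflections of $D_6$). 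Combining this with the inclusion of the previous paragraph gives $V(S_E)=\langle\alpha,\beta,\gamma\rangle$, and pulling back through the isomorphism $V^{-1}$ gives $\aff(S_E)=\langle\Psi_\alpha,\Psi_\beta,\Psi_\gamma\rangle$.

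The main obstacle is the reverse inclusion: proving there are no extra elements in the Veech group beyond those generated by $\alpha,\beta,\gamma$. This is exactly where one needs a genuine input — either a direct verification that a fundamental domain for $\langle\alpha,\beta,\gamma\rangle$ acting on $\H$ is the appropriate hyperbolic triangle (so that by finite covolume it can contain no larger lattice), or an appeal to the explicit determination of Veech groups of regular-polygon surfaces. The kernel-triviality step, by contrast, is a short finite check, and the forward inclusion ($D_6$ and $\gamma$ lie in the Veech group) is essentially immediate from the geometry once the cut-and-paste picture of the shear $\gamma$ is drawn; those are the routine parts I would not belabour.
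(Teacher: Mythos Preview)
The paper gives no proof of this lemma; it simply says the result follows ``by adapting the proofs of the orientation-preserving case exactly as for the octagon in \cite{octagon}''. Your overall strategy---check $\ker V$ is trivial by hand, verify $\alpha,\beta,\gamma\in V(S_E)$ from the geometry, then pin down the group via a fundamental-domain/Poincar\'e argument---is exactly the shape of that octagon argument, so in spirit you are doing what the paper intends.

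There is, however, a genuine error in your identification of the group. You assert that $V^+(S_E)$ is the Hecke group $H_6$ with a $(2,3,\infty)$ fundamental triangle. It is not: as the paper makes explicit later (Section~\ref{teich}), the full Veech group is the reflection triangle group $\Delta(3,\infty,\infty)$, with fundamental domain $\mathscr F$ a hyperbolic triangle having one finite vertex of angle $\pi/3$ and \emph{two} ideal vertices. The two cusps come from the two inequivalent periodic directions $\theta=0$ and $\theta=\pi/6$ (the two cylinder decompositions in Figure~\ref{C1-2}); there is no order-$2$ elliptic point. Consequently your proposed generating picture ``elliptic of order $2$, elliptic of order $3$, parabolic'' does not match the actual structure, and the reverse-inclusion step as you wrote it does not go through. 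The fix is straightforward once you have the correct triangle: show that $\alpha,\beta,\gamma$ are precisely the reflections in the three sides of the $(\pi/3,0,0)$ triangle $\mathscr F$, invoke Poincar\'e's polygon theorem to identify $\langle\alpha,\beta,\gamma\rangle$ with $\Delta(3,\infty,\infty)$, and then argue as in \cite{octagon} that $V(S_E)$ cannot properly contain this lattice. Your kernel argument is correct in outline; the concrete check is that the two fake singularities differ by a primitive $3$-torsion point of the underlying torus, so no non-trivial translation preserves the pair.
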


\subsection{Necessary condition}

\subsubsection{Veech element}

We will now describe the third generator of the Veech
group $\gamma$.

We know that if we have a decomposition in cylinders with
commensurable moduli, then we can define an affine automorphism
with a shear as derivative which acts as a multiple of a Dehn
twist on each cylinder.

The best thing is to show it on an example. Let us  consider a
rectangle with two parallel sides identified, say the vertical
sides. Its modulus, which we call $m$, is the ratio $\frac
{height}{width}$ and is the slope of the diagonal. If
$\mu=\frac{1}{m}$ we now apply to the rectangular representation
the shear $\begin{pmatrix}
1 & \mu \\
0 & 1
\end{pmatrix}$. The horizontal lines remain horizontal, while the
vertical ones are sent in lines of slope $\frac{1}{\mu}=m$, which
means that the new sides are the horizontal side and the diagonal.
It is clear that the diagonal is somehow the geodesic which goes along
both the previous ones and hence we can describe the related
affine automorphism $\psi$ (with derivative the shear) as in Figure \ref{Dehntwist}, by "twisting" the two extremities of the
cylinder.

\begin{figure}[h]
\centering
\includegraphics[width=0.7\textwidth]{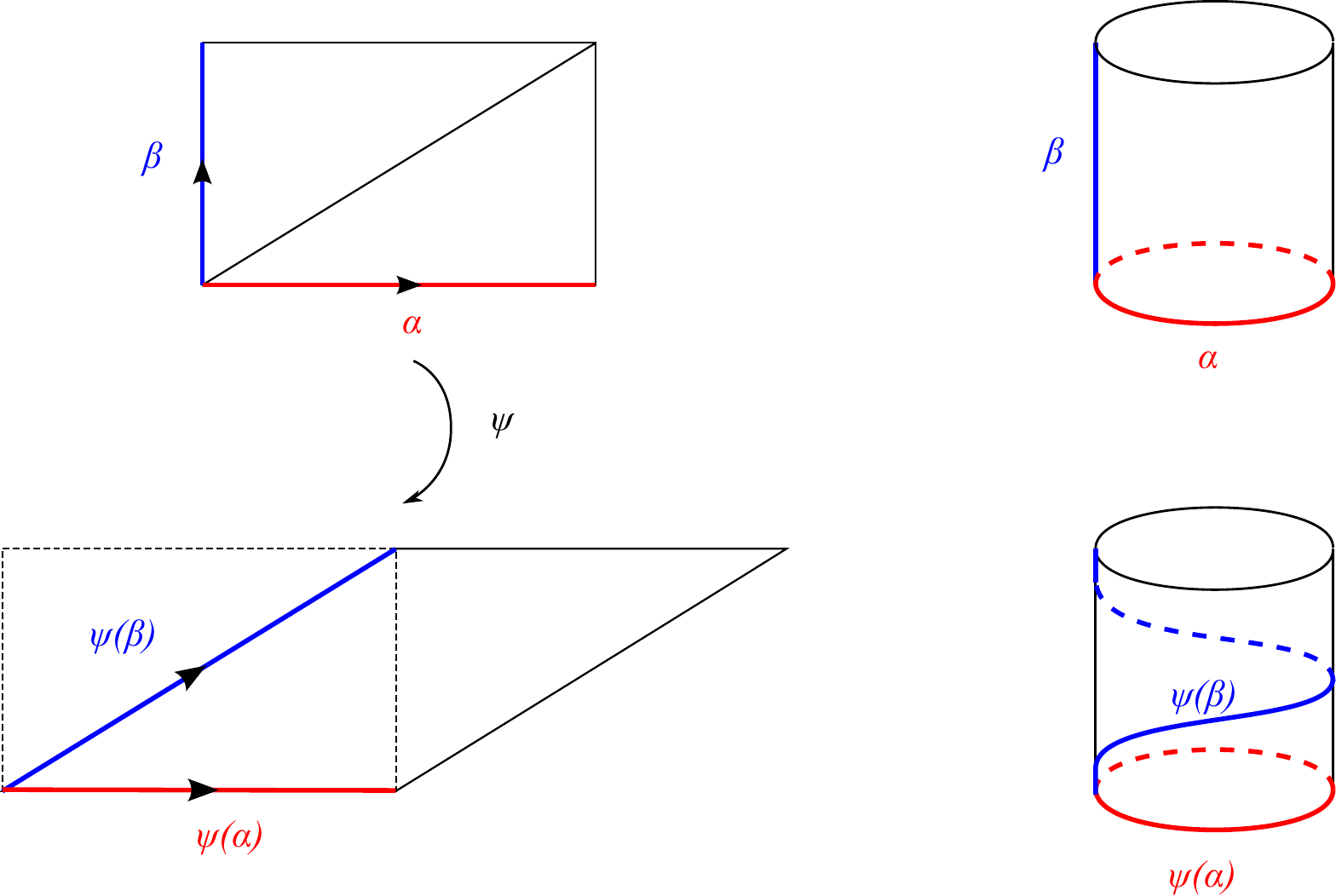}
\caption{Dehn twist on a  cylinder} \label{Dehntwist}
\end{figure}

When we have more than one cylinder, on each
cylinder we know how to do a Dehn twist and obviously a multiple
of it. Suppose hence that $S= \cup_i C_i$ and on each $C_i$ we
know how to apply $\psi_i^n$, for $\psi_i$ basic Dehn twist and $n
\in \N$. We define the affine automorphism
$\psi(x)=\psi_i^{n_i}(x)$ if $x \in C_i$ and $n_i$ is determined
by the condition of being an affine automorphism. In fact it is
equivalent to say that $D\psi_i=A$, a certain matrix independent
on $i$ and since $D(\psi_i^{n_i})=\begin{pmatrix}
1 & \mu_i^{n_i} \\
0 & 1
\end{pmatrix}$, we just need to choose a $\mu$ such that
$\mu=\mu_i^{n_i}=\mu_j^{n_j}$ for all $i,j$ (and it is possible if
the moduli are commensurable) and define $\psi$ such that its
derivative is

\[
A=\begin{pmatrix}
1 & \mu \\
0 & 1
\end{pmatrix}.
\]

It is independent on $i$ and it is a multiple of the standard Dehn
twist on each cylinder. Naturally, the matrix $A$ being the derivative
of an affine automorphism is an element of the Veech group.

In the case of the hexagon the corresponding orientation reversing
element (i.e. the composition of such an element with the
reflection with respect to the vertical axis) is exactly the
element $\gamma$ of the previous section, one of the generators of
the Veech group.

In order to find out explicitly that element, first of all we need
a decomposition in cylinders of the hexagon.

\begin{figure}[h]
\centering
\includegraphics[width=1\textwidth]{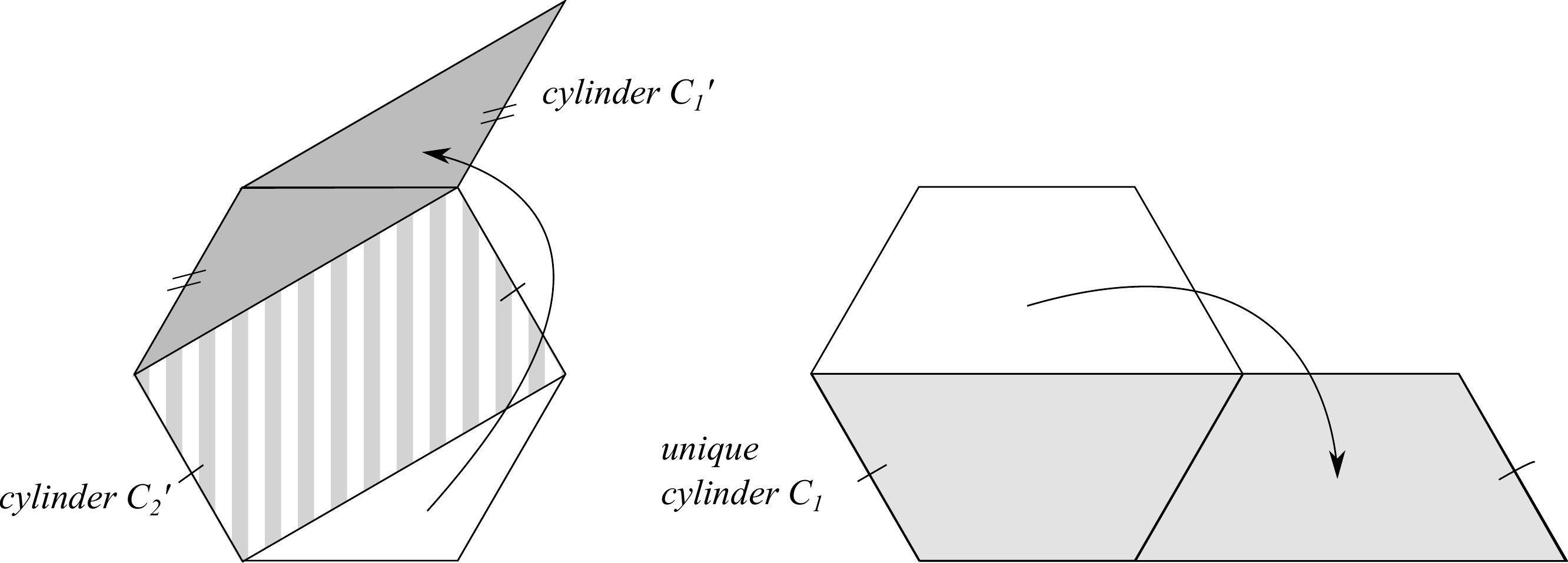}
\caption{cylinder decompositions} \label{C1-2}
\end{figure}

By looking at Figure \ref{C1-2}, we find out that we have two
possible cylinder decompositions. The two decompositions are in
the direction $\theta=0$ and $\theta=\frac{\pi}{6}$ and we
consider the horizontal one. It consists in cutting the hexagon
horizontally, and glueing the upper part to the side $C$, so that we
obtain a cylinder which has height $\frac{\sqrt 3}{2}$ and width
$3$. This means that its inverse modulus is $\mu=2 \sqrt{3}$.
Obviously we don't either have the problem of commensurability and
this gives us straightforwardly an affine diffeomorphism
$\Psi_\sigma$ which acts on the cylinder by a Dehn twist fixing
horizontal lines, sending vertical lines in lines of slope
$\frac{1}{\mu}$ and fixing singularities. Automatically, this
gives the element of the Veech group:
\[
\sigma=
\begin{pmatrix}
1 & 2\sqrt 3 \\
0 & 1
\end{pmatrix},
\]
as derivative of the affine diffeomorphism. The image of such an
element is a new (not regular) hexagon $E'=\sigma E$ shown in
Figure \ref{shear} and by definition of Veech group can be cut and
pasted again in the original hexagon $E$. We call such a cut and
paste map, a piecewise translation, by $\Upsilon_E$. His
derivative is obviously the identity and hence the affine
automorphism $\Psi_\sigma$ of the glued surface $S_E$ named before
is exactly $\Upsilon_E \circ \sigma$.

\begin{figure}[ht]
\centering
\includegraphics[width=1\textwidth]{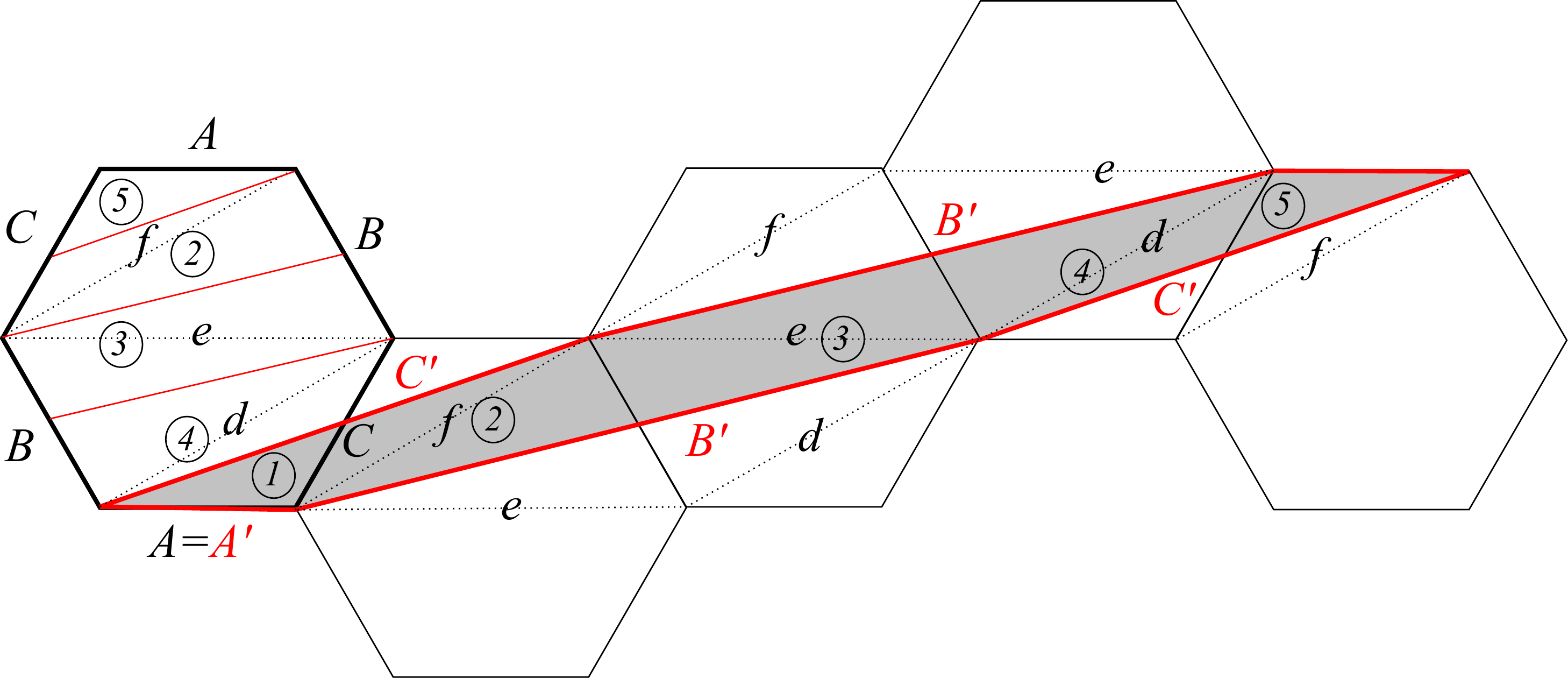}
\caption{Action of the shear} \label{shear}
\end{figure}

In order to simplify the notions that we will see later
on, we consider an other element of the Veech group, which acts in
the same way as $\sigma$ but reversing the orientation of the
labels. We define then
\begin{align*}
\gamma=\begin{pmatrix} -1 & 2\sqrt 3 \\
0 & 1
\end{pmatrix}
=\sigma \circ r_2
\end{align*}
which means that acting by $\gamma$ means reflecting the hexagon
with respect to the vertical axis (i.e. exchanging the labels $B$
and $C$, or equivalently reverting the orientation of the labels)
and then acting by the shear $\sigma$. It is easy to see that
$\gamma$ is an involution because its square is the identity.

\subsubsection{Statement}

The whole discussion about finding a necessary condition for a
sequence in the alphabet for being a cutting sequence is based on a
simple proposition, whose proof is purely combinatorial, but has a
huge geometrical hidden sense. The condition is

\begin{theo}
A cutting sequence is infinitely derivable.
\end{theo}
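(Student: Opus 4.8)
The plan is to prove that derivation corresponds geometrically to the action of the Veech element $\gamma$ on trajectories, and then iterate. The key claim, which would be stated as a proposition, is: if $\tau$ is a trajectory whose cutting sequence $w = c(\tau)$ is admissible in some diagram $\mathscr D_k$, then (after putting $w$ in normal form, i.e. passing to $n(w) = \pi_k \cdot w$ and the corresponding trajectory in $\Sigma_0$) the derived sequence $w'$ is the cutting sequence of the trajectory $\gamma \tau$ with respect to the original hexagon $E$. The point is that $\gamma E$ is a sheared hexagon which can be cut and pasted back to $E$ by the piecewise translation $\Upsilon_E$; reading off the sides of $\tau$ hit in the sheared polygon $\gamma^{-1}E$ (equivalently, tracking which sides of $E$ survive as genuine crossings after the cut-and-paste) is exactly the sandwiched-letter bookkeeping encoded in the definition of $w'$.

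First I would set up the reduction to $\Sigma_0$: given a cutting sequence $w = c(\tau)$, Lemma \ref{csadmissible} tells us $w$ is admissible in some $\mathscr D_k$, so by Remark \ref{commutation} and Remark \ref{invariance} it suffices to treat the case $\theta \in \Sigma_0$, since $c(\nu_k\tau) = \pi_k\cdot c(\tau)$ and $(\pi\cdot w)' = \pi\cdot w'$ commute the whole argument with the permutations. Next I would analyze, directly from the cylinder decomposition in Figure \ref{C1-2} and the action of $\gamma = \sigma\circ r_2$ shown in Figure \ref{shear}, how a trajectory in direction $\theta\in\Sigma_0$ crosses the sheared hexagon: the new edges of $\gamma^{-1}E$ cut the old trajectory into segments, and one checks combinatorially — using the transition diagram $\mathscr D_0$ — that the letters recorded for $\gamma\tau$ in $E$ are precisely those letters of $w$ that are sandwiched (flanked on both sides by the same letter), all others being dropped. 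This is the heart of the matter and amounts to a finite case check over the edges of $\mathscr D_0$, i.e. verifying that the auxiliary "stairs" or intermediate crossings introduced by the shear are exactly the non-sandwiched letters.

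With that proposition in hand, the theorem follows by induction. Since $\gamma\in V(S_E)$ is in the Veech group, $\gamma\tau$ is again a linear trajectory on $S_E$ (Remark after the deformation definition), so $c(\gamma\tau) = w'$ is itself a cutting sequence, hence by Lemma \ref{csadmissible} it is admissible in some diagram — that is, $w$ is derivable. Applying the proposition again to $w'$ (after re-normalizing via whichever $\pi_k$ now applies, which is legitimate by the commutation remarks) shows $w''$ is a cutting sequence, and so on: for every $n$, deriving $n$ times yields $c(\gamma^n \tau')$ for an appropriate trajectory $\tau'$, which is a cutting sequence and therefore admissible. Thus $w$ is infinitely derivable.

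The main obstacle I anticipate is the combinatorial core — establishing that the cut-and-paste map $\Upsilon_E\circ\gamma$ converts the crossing sequence of $\tau$ into exactly the sandwiched-subsequence of $w$. One must be careful that "sandwiched" is the right notion (the $B\ar@(ur,dr)$ self-loop in $\mathscr D_0$ is what makes $B$'s able to be sandwiched), that the normalization by $\pi_k$ does not interfere with which letter plays the role of the sandwiching letter $L'$, and that edge cases where $\theta$ sits near the boundary $\tfrac{\pi}{6}$ of $\Sigma_0$ (where $\gamma$ maps back to a direction again in $[0,\pi]$, to be re-sent to $\Sigma_0$) are handled so that the induction genuinely closes. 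The rest — the reduction to $\Sigma_0$ and the inductive step — is bookkeeping with the already-proven lemmas.
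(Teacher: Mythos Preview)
Your outline is correct and follows the paper's architecture: reduce to $\Sigma_0$ via the $\nu_k$ and Remarks~\ref{commutation} and~\ref{invariance}, prove the Central Proposition $c(\tau)'=c(\Psi_\gamma\tau)$, then iterate using Lemma~\ref{csadmissible}. The one substantive difference is in the combinatorial core. The paper does not do a bare finite case check over the arrows of $\mathscr D_0$; instead it introduces auxiliary diagonals $d,e,f$ in directions $0$ and $\pi/6$, builds an augmented diagram $\tilde{\mathscr D}_0$ recording their crossings, then swaps the roles of original sides and diagonals to obtain a diagram $\tilde{\mathscr D}_0'$, and finally uses a parallelogram trick (each side of the sheared hexagon $E'$ is the diagonal of a parallelogram with sides among the auxiliary edges, so its crossings are determined by the auxiliary cutting sequence) to read off $\overline c(\tau)$. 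Comparing $\tilde{\mathscr D}_0$ with the resulting $\mathscr D_0'$ makes the statement ``keep exactly the sandwiched letters'' transparent rather than a case enumeration, and this is the mechanism that generalizes cleanly to the $2n$-gons. One notational slip to fix: $\gamma$ is an involution ($\gamma^2=\id$), so ``$c(\gamma^n\tau')$'' is not the right expression for the $n$-th derivative; each step is conjugated by a fresh normalization, so the trajectory after $n$ derivations is $\nu_{i_{n-1}}^{-1}\Psi_\gamma\nu_{i_{n-1}}\cdots\nu_{i_0}^{-1}\Psi_\gamma\nu_{i_0}\,\tau$, and these do not collapse. You clearly intend this, since you mention re-normalizing, but the formula as written is misleading.
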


The proof is very simple using the following:

\begin{prop}\label{derivedcs}
The derived sequence of a cutting sequence is again a cutting
sequence.
\end{prop}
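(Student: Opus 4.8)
The plan is to realize the derivation operation on cutting sequences as a genuine geometric change of coordinates, exactly in the spirit of the sketch given in the introduction. Fix a cutting sequence $w = c(\tau)$ of a linear trajectory $\tau$ on $S_E$. By Lemma~\ref{csadmissible}, $w$ is admissible in some diagram $\mathscr{D}_k$; after replacing $\tau$ by its normal form $n(\tau) = \nu_k \tau$ and using Remark~\ref{commutation} together with Remark~\ref{invariance} (which guarantee that both ``taking the cutting sequence'' and ``deriving'' are equivariant under the label permutations $\pi_k$), we may assume the direction $\theta$ of $\tau$ lies in $\Sigma_0 = [0,\tfrac{\pi}{6}]$, so that $w$ is admissible in $\mathscr{D}_0$. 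It therefore suffices to prove that $w'$ is a cutting sequence of some linear trajectory on $S_E$, and then transport the conclusion back by $\pi_k^{-1}$.

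Next I would introduce the sheared hexagon $E' = \gamma E$ of the previous paragraph, where $\gamma = \sigma \circ r_2$ is the Veech element, and label the sides of $E'$ by carrying the labels $A,B,C$ along under $\gamma$. The key geometric step: for a direction $\theta \in \Sigma_0$, the trajectory $\gamma \tau$ in $E'$ has a direction lying in a sector to which $E'$ is adapted, and one checks by a direct cut-and-paste inspection of Figure~\ref{shear} that the sequence of $E'$-sides crossed by $\gamma\tau$ is obtained from the sequence of $E$-sides crossed by $\tau$ by exactly the sandwiching rule — i.e. the $E'$-cutting sequence of $\gamma\tau$ equals $w'$, the derived sequence of $w$. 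This is the combinatorial heart of the argument and, as in \cite{octagon}, it amounts to listing the finitely many admissible transitions in $\mathscr{D}_0$, following each short word across the two presentations, and seeing that precisely the sandwiched letters survive while the non-sandwiched ones record sides of $E$ that are interior to $E'$ (and conversely). This is the step I expect to be the main obstacle: it is elementary but requires care, since one must track the identifications of sides under the cut-and-paste map $\Upsilon_E$ and be sure the bookkeeping of labels (including the orientation-reversal built into $\gamma$) is consistent across all cases of $\mathscr{D}_0$.

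Finally, since $\gamma \in V(S_E)$, the surface $E' = \gamma E$ is, after the cut-and-paste $\Upsilon_E$, translation equivalent to the original hexagon $E$; equivalently $\Psi_\gamma = \Upsilon_E \circ \gamma$ is an affine automorphism of $S_E$. Pushing the trajectory $\gamma\tau$ on $E'$ forward by $\Upsilon_E$ gives a linear trajectory $\tau' = \Psi_\gamma \tau$ on $S_E$ itself, and because $\Upsilon_E$ is a piecewise translation carrying $E'$-sides to $E$-sides label-by-label, the $E$-cutting sequence of $\tau'$ is the same word as the $E'$-cutting sequence of $\gamma\tau$, namely $w'$. Hence $w' = c(\tau')$ is a cutting sequence. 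Undoing the initial normalization by applying $\pi_k^{-1}$ and invoking Remark~\ref{invariance} once more shows that the derived sequence of the original $w$ is a cutting sequence, which is the claim. $_\blacksquare$
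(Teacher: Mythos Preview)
Your overall architecture matches the paper's: normalize to $\Sigma_0$ using Remarks~\ref{commutation} and~\ref{invariance}, prove the key geometric fact for that sector, and then identify the derived sequence with $c(\Psi_\gamma\tau)$. However, the middle paragraph contains a genuine conceptual slip that would make the argument collapse if carried out literally.

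You write that the sequence of $E'$-sides crossed by $\gamma\tau$ is the derived sequence of $c(\tau)$. But you have defined $E'=\gamma E$ with labels \emph{carried along by $\gamma$}. Under that convention, $\gamma$ is a bijection taking $E$-sides to $E'$-sides preserving labels, so the $E'$-cutting sequence of $\gamma\tau$ is \emph{tautologically} equal to the $E$-cutting sequence of $\tau$, namely $c(\tau)$ itself, not $c(\tau)'$. Applying a linear map simultaneously to the trajectory and to the polygon can never change the symbolic coding. What the paper does instead --- and what your parenthetical remark about ``sides of $E$ that are interior to $E'$'' shows you actually have in mind --- is to keep the trajectory $\tau$ fixed and compare its coding with respect to the $E$-sides versus its coding with respect to the $E'$-sides once $E'$ has been overlaid on the same surface $S_E$ via the cut-and-paste $\Upsilon_E$. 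In the paper's notation this second coding is $\overline c(\tau)$, and the content of the proof is that $\overline c(\tau)=c(\tau)'$. Only \emph{after} establishing this does one apply $\gamma$ (using that $\gamma$ is an involution, so $\gamma\colon E'\to E$) to obtain the new trajectory $\tau'=\Psi_\gamma\tau$ in $E$ with $c(\tau')=\overline c(\tau)=c(\tau)'$. You have the two roles of $\gamma$ --- deforming the polygon and transporting the trajectory --- interleaved incorrectly.

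A second, smaller point: the paper does not verify the combinatorial identity $\overline c(\tau)=c(\tau)'$ by a raw case check on transitions as you propose. It introduces auxiliary diagonals $d,e,f$ in directions $0$ and $\tfrac{\pi}{6}$, builds the augmented diagram $\tilde{\mathscr D}_0$ recording crossings of these diagonals, swaps the roles of sides and diagonals to get $\tilde{\mathscr D}_0'$, and then observes that each $E'$-side is the diagonal of a parallelogram with sides among the auxiliary edges, so that crossings of $E'$-sides are determined by the augmented sequence. Comparing $\tilde{\mathscr D}_0$ with the resulting $\mathscr D_0'$ then shows mechanically that exactly the sandwiched letters survive. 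This is more than bookkeeping: the parallelogram observation is what makes the comparison uniform rather than ad hoc, and it is the step you should expect to reproduce rather than bypass.
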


In fact, if the derived sequence is a cutting sequence, it is
admissible in some of the diagrams $\mathscr D_i$ and hence by
definition the original one is derivable. By repeating this
argument, we obtain the infinite derivability.

Our next step will be then to rephrase Proposition
\ref{derivedcs} in a more precise way and prove it.

\begin{prop}[Central Proposition]\label{central}
Given a trajectory $\tau$ in direction $\theta \in \Sigma_0$ with
cutting sequence $c(\tau)$, let us consider the trajectory
$\tau'=\Psi_\gamma \tau$. We have that the cutting sequence of the
new trajectory is exactly the derived sequence of the cutting
sequence of the old one. It means that
\[
c(\tau')=c(\tau)'.
\]
\end{prop}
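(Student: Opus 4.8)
The plan is to read the cutting sequence of $\tau$ off two different systems of labelled edges drawn on the hexagon $E$ and to recognise the passage between the two readings as the combinatorial derivation of words. As with $\sigma$ in the preceding paragraph, write $\Psi_\gamma=\Upsilon\circ\gamma$, where $\gamma\colon E\to\gamma E$ is the linear map carrying $E$ to the sheared hexagon $\gamma E$ of Figure~\ref{shear} and $\Upsilon\colon\gamma E\to E$ is the cut-and-paste piecewise translation furnished by the Veech group; since the kernel of $V$ is trivial and $\gamma^2=\id$, $\Psi_\gamma$ is an involution. By the \emph{$\gamma$-edges} I will mean the images under $\Psi_\gamma$ of the three standard sides of $E$: three families of parallel segments inside $E$, each inheriting a label in $\{A,B,C\}$. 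The reflection $r_2$ built into $\gamma$ is precisely what makes this labelling match up, so that no auxiliary permutation of letters is needed (compare Remark~\ref{commutation}).

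The first step is to reduce the proposition to a statement about crossings. Since $\Psi_\gamma$ is a bijection $E\to E$ sending $\tau$ to $\Psi_\gamma\tau$ and, being an involution, interchanging the standard sides with the $\gamma$-edges label by label, the trajectory $\Psi_\gamma\tau$ meets a standard side labelled $L$ at a point $p$ if and only if $\tau$ meets the $\gamma$-edge labelled $L$ at the point $\Psi_\gamma(p)$. Reading both trajectories in corresponding directions, this gives
\[
c(\Psi_\gamma\tau)=\big(\text{the sequence of $\gamma$-edges met by }\tau\big).
\]
It therefore suffices to show that this last sequence is obtained from $c(\tau)$ by retaining the sandwiched letters and deleting the rest, that is, that it equals $c(\tau)'$. (The trajectory $\Psi_\gamma\tau$ has direction $\gamma\theta$, which is no longer in $\Sigma_0$ since $\gamma$ reverses orientation; this is irrelevant to the present combinatorial identity, and for iterating the proposition one simply renormalises back into $\Sigma_0$ using Remarks~\ref{commutation} and~\ref{invariance}.)

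The core of the argument is then a finite, purely planar analysis, carried out exactly as in~\cite{octagon} for the octagon. I would superimpose the standard sides and the $\gamma$-edges on $E$, having first located the latter precisely from the cut-and-paste description of $\Upsilon$ (that is, from Figure~\ref{shear}), observe that a line of slope in $\Sigma_0$ meets at most one $\gamma$-edge between two consecutive crossings of standard sides, and then go through the finitely many local configurations in which the two edge systems can be crossed — the admissible ones already being restricted by the transition diagram $\mathscr D_0$ of Figure~\ref{D0}. The assertion to be verified is twofold: whenever the standard crossings along an arc of $\tau$ spell $L'LL'$ for a single letter $L'$ (so that the middle $L$ is sandwiched), the trajectory crosses on that arc exactly one $\gamma$-edge, and it carries the label $L$; and in every other local configuration no $\gamma$-edge is crossed. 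Granting this, the $\gamma$-edges met by $\tau$, listed in order, are exactly the sandwiched letters of $c(\tau)$, i.e.\ $c(\tau)'$; combined with the displayed identity, $c(\Psi_\gamma\tau)=c(\tau)'$.

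I expect essentially all of the difficulty to lie in this last step — pinning down the position of the $\gamma$-edges relative to the standard ones and then checking each local picture against the sandwiching criterion; the factorisation of $\Psi_\gamma$, the reduction to crossings, and the reindexing of directions are routine. One further point to watch is the behaviour of trajectories near the vertices of $E$ and at the sector boundaries $\theta=0$ and $\theta=\pi/6$; this causes no trouble because trajectories through the fake singularities have been excluded from the outset.
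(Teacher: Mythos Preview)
Your reduction is sound: recognising that $\Psi_\gamma$ is an involution and hence that $c(\Psi_\gamma\tau)$ is the sequence of $\gamma$-edges met by $\tau$ is exactly the right opening, and it does reduce the proposition to a finite planar check. From there, however, your route and the paper's diverge.

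The paper does not compare the standard sides and the $\gamma$-edges directly. It interposes a third family of segments, the auxiliary diagonals $d,e,f$ drawn in directions $0$ and $\pi/6$, and factors the comparison through them in two stages. First it records, on an augmented transition diagram $\tilde{\mathscr D_0}$, which standard transitions force a crossing of which auxiliary edge. Then comes the \emph{parallelogram trick}: each side of the sheared hexagon $E'$ sits as the positively-sloped diagonal of a parallelogram whose sides lie among $A,d,e,f$, so a line with slope in $\Sigma_0$ crosses that diagonal if and only if it traverses the two parallel sides of the parallelogram in succession. This converts the geometric question into a comparison of two small labelled graphs $\tilde{\mathscr D_0}$ and $\mathscr D_0'$, from which one reads off by inspection that the surviving letters are precisely the sandwiched ones. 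The payoff is that no configuration-by-configuration picture analysis is needed, and the same device ports verbatim to the octagon, the $2n$-gons, and the Bouw--M\"oller surfaces treated later in the paper.

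Your direct approach would also work in principle, but your statement of the criterion is not quite well-posed: three-letter arcs $L'LL'$ overlap one another, so ``exactly one $\gamma$-edge on a sandwiched arc, none on the others'' is not a partition of the trajectory and the two clauses can conflict on the shared segment. What you actually need is a bijection, respecting order and label, between $\gamma$-edge crossings and sandwiched standard crossings. Note too that since $A'=A$ the $\gamma$-edge labelled $A$ coincides with the standard side $A$ and is crossed \emph{at}, not between, a standard crossing---so your ``at most one $\gamma$-edge between two consecutive crossings'' already needs a separate clause for that case. These are repairable, but sorting them out cleanly is exactly where the auxiliary-edge bookkeeping earns its keep.
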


Proving the second proposition, \ref{central}, is enough to obtain
the previous one, \ref{derivedcs}.

In fact suppose we have a trajectory $\tau$ in direction $\theta
\in \Sigma_i$ and call $w=c(\tau)$ its cutting sequence. Using the
second proposition we will show that the derived sequence $w'$ is
again a cutting sequence of a certain trajectory that will be
explicit later. As one immediately sees the problem is in showing
that we can somehow pass from trajectories in whatever direction
to trajectories in $\Sigma_0$ and a similar statement for a
different $\tau'$ is true. In fact by considering the normal form
defined by $n(w)=\nu_i\tau \in \Sigma_0$, we can apply Proposition \ref{central} and obtain that
\[
c(\Psi_\gamma \nu_i\tau)=c(\nu_i \tau)'=(\pi_i \cdot
c(\tau))'=\pi_i \cdot (c(\tau))'
\]
where the last inequalities are respectively Remark
\ref{commutation} and the invariance of the sandwich property
under relabelling. But then we can consider the trajectory
$\nu_i^{-1} \Psi_\gamma \nu_i \tau$ and a similar argument gives
\begin{align*}
c(\nu_i^{-1} \Psi_\gamma \nu_i \tau)&=\pi_i^{-1} \cdot c(
\Psi_\gamma \nu_i \tau)=\text{...previous equalites...}= \\
&=\pi_i^{-1}
\cdot \pi_i \cdot (c(\tau))'=c(\tau)'=w'.
\end{align*}

This means then that the derived sequence of the cutting sequence
of a trajectory $\tau$ in a direction in the $i-th$ sector is
again a cutting sequence and precisely it is the cutting sequence
of the trajectory $\nu_i^{-1} \Psi_\gamma \nu_i \tau$.

\subsubsection{Proof}

It remains now to prove Proposition \ref{central} and we do it
immediately.

\begin{proof}

We divide the proof in four simpler steps.

\begin{figure}[h]
\centering

\includegraphics[width=0.25\textwidth]{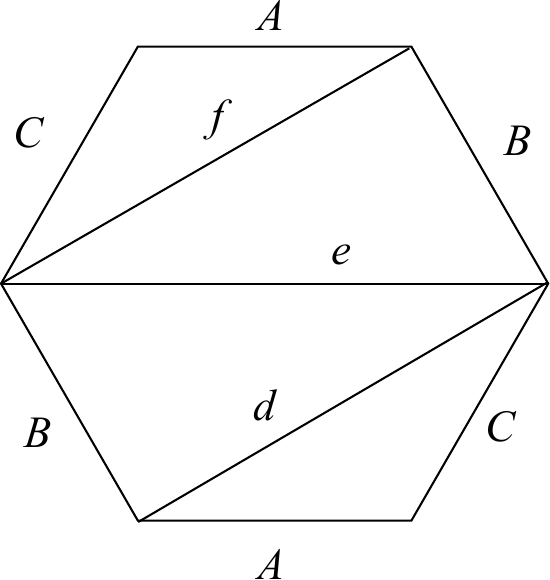}
\label{diagonal}
\end{figure}

\emph{First step}: First of all, we mark in the hexagon the
diagonals in directions $0$ and $\frac{\pi}{6}$ (the auxiliary
edges), and label them $d,e,f$ starting from the bottom, as in
Figure \ref{diagonal}. If now we consider the cutting sequence of
$\tau$, we can record whenever it crosses these diagonals by
writing down what we will call the \emph{augmented cutting
sequence} $\tilde c(\tau) \in \{A,B,C,d,e,f\}^\Z$. If we remember
that the cutting sequence of a trajectory in direction $\theta \in
\Sigma_0$ represents a path in the diagram $\mathscr D_0$, we can
construct an other diagram $\tilde {\mathscr D_0}$ on which the
same path will give us the augmented cutting sequence reading
both the pointed letters and the labels of the arrows. In fact,
we can see that for each transition, we can either cross or not an
auxiliary edge and it is always well determined. The new diagram
becomes hence:
\[
\tilde {\mathscr D_0} \qquad \qquad  \xymatrix{ A \ar@/^1pc/[r] &
C \ar@/^1pc/[l] \ar@/^1pc/[r]^f
 & B \ar@/^1pc/[l]^d \ar@(ur,dr)^e }
\]
and it will allow us to determine univocally the augmented
cutting sequence using $c(\tau)$.

\emph{Second step}: We now try to interchange the roles of edges
and vertices of the new diagram except for $A$, which is somehow
both a normal side and an auxiliary side, since it is a "diagonal"
in direction $0$. What we mean geometrically is to obtain a code
for the trajectory not based on the original sides, but on the
auxiliary sides, recording as before the crossings with normal
sides. $B$ and $C$ become hence labels of arrows and the diagrams
is

\[
\tilde {\mathscr D_0'} \qquad \qquad  \xymatrix{ && f
\ar@/^1pc/[drr]^B \ar@/^1pc/[dd]^B && \\
A \ar@(ul,dl)_C \ar@/^1pc/[urr]^C &&& & e \ar@/^1pc/[dll]^B \ar@(ur,dr)^B \\
&& d \ar@/^1pc/[ull]^C \ar@/^1pc/[uu]^C&&}
\]

The new diagram can determine a new "cutting sequence"
representing a path in $\tilde {\mathscr D_0'}$ and coding the
trajectory with respect to the auxiliary sides. We will call it
$\hat c(\tau) \in \{A,d,e,f\}^\Z$ and is obtained from $\tilde c(\tau)$
by erasing $B$'s and $C$'s.

\emph{Third step}: Let us now apply the cut and paste map
$\Upsilon^{-1}_E=E'$. As we saw, we obtain a new hexagon and we
label its sides with the same letters as the previous hexagon, but
primed (see Figure \ref{shear}). Obviously, our trajectory $\tau$
in $E$ is transported in $E'$ by the cut and paste and we can code
the trajectory with respect to the sides of the new hexagon. We
will call this new cutting sequence $\overline c(\tau) \in
\{A',B',C'\}^\Z$. The key remark is that we can write down $\overline
c(\tau)$ if we know $\hat c(\tau)$. How? It is evident that if we
have a trajectory in direction in $\Sigma_0$, and a parallelogram
with sides in directions $0$ and $\frac{\pi}{6}$, knowing the
cutting sequence with respect to the sides of the parallelogram
gives automatically the intersections with the positive slope
diagonal. Now, we can inscribe all the sides of $E'$ in
parallelograms made up of auxiliary edges (see Figure \ref{shear}). This means that if we
have the cutting sequence with respect to the auxiliary edges, we
can "augment" the sequence with diagonals and erase the useless
ones to obtain the cutting sequence with respect to the new sides.
It worth to draw the relative diagram $\mathscr D_0'$, noticing
that $A=A'$:
\[
\mathscr D_0' \qquad \qquad  \xymatrix{ && f
\ar@/^1pc/[drr] \ar@/^1pc/[dd]^{B'} && \\
A \ar@(ul,dl)_{C'} \ar@/^1pc/[urr] &&& & e \ar@/^1pc/[dll] \ar@(ur,dr)^{B'} \\
&& d \ar@/^1pc/[ull] \ar@/^1pc/[uu]^{C'} &&}
\]
which records when, in a transition between two auxiliary sides
(i.e. two sides of a parallelogram) we cross a primed side (i.e. a
diagonal). This means that if we know the path which determines
the sequence $\hat c(\tau)$, by reading the right letters ($A=A'$
and the labels of the arrows), we obtain $\overline c(\tau)$.

\emph{Fourth step}: Up to changing the labels $A=A'$, $B
\longleftrightarrow B'$ and $C \longleftrightarrow C'$ we have
\[
\overline c(\tau)=c(\tau)'.
\]
In fact, $c(\tau)$ can be read on the diagram $\tilde{\mathscr
D_0}$ by reading the label $A$ and the labels of the arrows.
Meanwhile, $\overline c(\tau)$ can be read on the diagram $\mathscr
D_0'$ in the same way. This means that it is enough to compare the
two diagrams $\tilde{\mathscr D_0}$ and $\mathscr D_0'$, but it is
easy to check that up to changing the letters as we said before,
$\mathscr D_0'$ is obtained from $\tilde{\mathscr D_0}$ erasing
from the arrow's labels exactly those letters which are not
sandwiched.

\emph{Fifth step}: Finally, by considering
\begin{align*}
\gamma \colon E' &\to E \\
B' &\mapsto B \\
C' &\mapsto C \\
\Upsilon ^{-1}_E \tau &\mapsto \tau'=\gamma \Upsilon^{-1}_E
\tau=\Upsilon^{-1}_E \gamma \tau=\Psi_\gamma \tau
\end{align*}
we exchange the labels as we needed in the previous step and we
send the trajectory $\tau$ seen as $\Upsilon ^{-1}_E \tau$ in $E'$
after the cut and paste in the new trajectory $\tau'$ of the
statement which will have cutting sequence with respect to $A=A'$,
$B=\gamma B'$ and $C=\gamma C'$ the sequence $c(\tau')=\overline
c(\tau)=c(\tau)'$ for the previous step and putting all together
we have exactly as in the statement of the central proposition
\[
c(\tau)'=c(\tau').
\]\qedhere
\end{proof}

We wish to point out that the definition of $\gamma$ as the
orientation reversing element is useful in order to have in the
end of the proof the same letters to substitute $C'$ with $C$ and
$B'$ with $B$. Obviously, nothing would change in the proof mixing
the two labels, which are only symbols to identify edges, but
using $\gamma$ instead of $\sigma$ allows us a simpler exposition.

\subsection{Farey map}

\subsubsection{Definition}

As in \cite{square} for the square and in \cite{octagon} for the
$2n$-gons, we want now to describe a way of producing a continued
fraction expansion for the directions. Exactly in the same way and
with the same proof as in \cite{octagon} we will describe it as the
itinerary of a map and use it to recover the direction of a
trajectory given its cutting sequence.

We want to find out a way of acting with matrices on the space of
directions $\R\P^1$. Each sector $\Sigma_i$ corresponds to a
sector $\tilde \Sigma_i \in \R\P^1$ and each $\nu_i$ induces hence
an action $\nu_i \colon \tilde \Sigma_i \to \tilde \Sigma_0$.

We now consider the angle coordinate in $\R\P^1$ given by $\theta
\in [0,\pi]$ and define the \emph {Farey map} as the piecewise map
defined on each $\tilde \Sigma_i$ by the branch $F_i$ given by the
action of $\gamma \nu_i$.

\begin{defin}
The Farey map $F \colon \R\P^1 \to \R\P^1$ is defined in the angle
coordinate $\theta$, for $i=0,\dots,5$ by
\[
F_i=\cot^{-1}\left( \frac{a \cot \theta +b}{c \cot \theta
+d}\right), \quad \text{ if } \theta \in \Sigma_i \text{ and }
\begin{pmatrix}
a & b \\
c & d
\end{pmatrix}=\gamma\nu_i.
\]
\end{defin}

In Figure \ref{fareygraph} is the graph of such a map in the
coordinates.

\begin{figure}[h]
\centering
\includegraphics[width=0.5\textwidth]{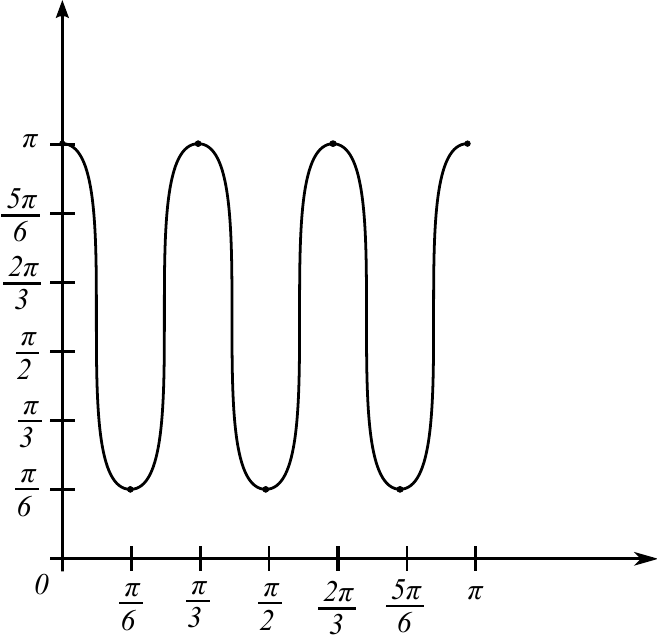}
\caption{Graph of the Farey map} \label{fareygraph}
\end{figure}

\begin{rk}\label{contimage}
As the graph easily shows, it is a continuous map and since each
branch is monotonic, they have an inverse map well defined.
Moreover, the image of the Farey map is always contained in
$[\frac{\pi}{6},\pi]$.
\end{rk}

\begin{rk}
The utility of the Farey map goes much further than we can explain
here. Still, we can't miss to point out an important feature
dealing with cutting sequences.

In fact, by the definition of the first branch $F_0$, since
$\nu_0$ is the identity, we can find out something more about the
trajectory $\tau'$ whose cutting sequence is the derived sequence
of $\tau$, as described in \ref{central}. In fact if $\tau$ is a
trajectory in direction $\theta \in \Sigma_0$ as in the
proposition, the new trajectory $\tau'=\Psi_\gamma \tau$ is in
direction $F(\theta)$.
\end{rk}

\subsubsection{Hexagon Farey expansion}

Starting from a $\theta \in [0,\pi]$ we can code its itinerary by
applying repeatedly the right branch of $F$. In order to do that,
we assign to the angle $\theta$ the sequence $s_0,s_1,\dots$ such
that $F^i(\theta) \in \Sigma_{s_i}$.

The previous remark, \ref{contimage}, shows us that such an
itinerary is always such that $s_0 \in \{0, \dots, 5\}$, while
$s_i \in \{1, \dots, 5\}$ for $i \geq 1$. We will call $S^*$ such
a subset of the set of sequences $\{0,\dots,5\}^\N$.

As the itinerary of the Gauss map for the square gives the
continued fraction expansion of the direction, in that case too
the itinerary determines uniquely the direction $\theta$ at the
beginning. In fact if we write
\[
\Sigma[s_0;s_1, \dots, s_k]= F^{-1}_{s_0}F^{-1}_{s_1} \dots
F^{-1}_{s_k}[0,\pi]
\]
for the set of all direction whose itinerary starts with the
segment $s_0, s_1, \dots s_k$, with the same proof as in
\cite{octagon} for a given infinite itinerary $\{s_k\} \in S^*$ we
can consider the intersection
\[
\bigcap_{k \in \N} \Sigma[s_0,s_1,\dots,s_k]
\]
and it turns out to be a unique number $\theta \in [0,\pi]$. we
can then write $\theta=[s_0;s_1,\dots]$ and call the sequence
$\{s_k\}$ the \emph{hexagon Farey expansion of $\theta$}.

\subsubsection{Direction recognition}

The problem posed and solved in this section is the following: is
it possible to recover the direction of a non-periodic trajectory
given its cutting sequence?

As we saw, we can associate to each direction an itinerary
under the Farey map and conversely to each infinite sequence a
unique direction it comes from. We will now relate such an
itinerary with the cutting sequence of the trajectory.

Again, we won't write down the proofs, since they are exactly the
same as those in the case of the octagon, which can be found in
\cite{octagon}.

We will see shortly how useful is the hypothesis of non
periodicity.

Let us consider a cutting sequence $w=c(\tau)$. We saw that a cutting
sequence always represents a path in at least one of the diagrams
$\mathscr D_{d_0}$. However, we want such $d_0$ to be well
defined and hence we suppose that the cutting sequence $w=w_0$ is
admissible in one unique diagram. We now consider the normal form
of $w$ as defined in \ref{normalword} and set $w_1=n(w)'$. Let us now
suppose again that $w_1$ is admissible in only one diagram
$\mathscr D_{d_1}$ and define in the same way $w_2$.

Applying the procedure recursively, we have defined the sequence of
words $\{w_k\}_{k \in \N}$ as
\[
\begin{cases}
w_0=w \\
w_{k+1}=n(w_k)'
\end{cases}
\]
and the sequence of admissible diagrams $\{d_k(w)\}_{k \in \N}$.

At each step we are assuming that the sequence is admissible in
one unique diagram and that seems to be a quite restrictive
hypothesis. Still, the condition can be proved to be quite simple
and is actually exactly the condition we had at the beginning of
the section: it can be proved that it is sufficient to ask that
the trajectory of cutting sequence $w$ is non-periodic.

This construction is what allows us to relate a cutting sequence
with the Farey expansion of the direction of our trajectory, still supposed until
now to be unknown. In fact, we have

\begin{theo}
If $w$ is a non periodic cutting sequence with sequence of
admissible diagrams $\{d_k(w)\}_{k \in \N}$, it is the coding of a
trajectory in direction
\[
\theta=[d_0(w);d_1(w),d_2(w),\dots].
\]
\end{theo}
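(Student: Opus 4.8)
The plan is to combine the Central Proposition (\ref{central}) together with the commutation relations of Remark \ref{commutation} and Remark \ref{invariance}, exactly as was done to deduce Proposition \ref{derivedcs} from Proposition \ref{central}, and then to track how the sequence of admissible diagrams corresponds to the itinerary of the direction under the Farey map. First I would recall the setup: given the non-periodic cutting sequence $w = w_0 = c(\tau)$, admissible in the unique diagram $\mathscr{D}_{d_0}$, we form $w_1 = n(w_0)'$, admissible in the unique $\mathscr{D}_{d_1}$, and so on. The key geometric input, which I would establish first, is that each step $w_k \mapsto w_{k+1} = n(w_k)'$ corresponds on the level of trajectories to applying $\Psi_\gamma \nu_{d_k}$: if $w_k = c(\tau_k)$ for a trajectory $\tau_k$ in direction $\theta_k$, then $\theta_k \in \Sigma_{d_k}$ (this is precisely why the admissible diagram is $\mathscr{D}_{d_k}$), and $w_{k+1} = n(w_k)' = (\pi_{d_k}\cdot c(\tau_k))' = \pi_{d_k}\cdot(c(\tau_k))' = \pi_{d_k}\cdot c(\Psi_\gamma\nu_{d_k}^{-1}\cdots)$... — more cleanly, using Remark \ref{commutation} one gets $n(w_k) = \pi_{d_k}\cdot c(\tau_k) = c(\nu_{d_k}\tau_k)$, so $\nu_{d_k}\tau_k$ has direction in $\Sigma_0$, and then by the Central Proposition $(n(w_k))' = c(\Psi_\gamma \nu_{d_k}\tau_k)$. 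Hence $w_{k+1} = c(\tau_{k+1})$ with $\tau_{k+1} = \Psi_\gamma\nu_{d_k}\tau_k$, whose direction, by the branch definition of the Farey map, is exactly $F_{d_k}(\theta_k)$, i.e. $\theta_{k+1} = F(\theta_k)$.

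Next I would assemble these into the itinerary statement. Since $\theta_0 = \theta$ is the direction of $\tau$ and $\theta_{k+1} = F(\theta_k)$, we get $F^k(\theta) = \theta_k \in \Sigma_{d_k}$ for every $k$, which says precisely that $(d_0, d_1, d_2, \dots)$ is the itinerary of $\theta$ under the Farey map, i.e. $d_k = s_k$ in the notation of the Hexagon Farey expansion section. By the uniqueness statement established there (with the same proof as in \cite{octagon}), the nested intersection $\bigcap_{k\in\N}\Sigma[d_0; d_1,\dots,d_k]$ is a single point, and that point is $\theta$. Therefore $\theta = [d_0(w); d_1(w), d_2(w), \dots]$, which is the assertion. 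I would also remark that the hypothesis of non-periodicity of $w$ enters exactly where the excerpt flags it: it is what guarantees that each $w_k$ is admissible in a \emph{unique} diagram, so that $d_k(w)$ is well defined at every stage and the recursion does not break down — a periodic trajectory can be simultaneously admissible in two diagrams and the itinerary would be ambiguous.

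The main obstacle — or rather the one genuinely load-bearing point that must be handled carefully — is verifying that $\theta_k \in \Sigma_{d_k}$, i.e. that the unique diagram in which $c(\tau_k)$ is admissible really is the one indexed by the sector containing the direction of $\tau_k$. One direction is Lemma \ref{csadmissible} (a cutting sequence of a direction in $\Sigma_j$ is admissible in $\mathscr{D}_j$); for the converse one needs that a non-periodic cutting sequence cannot be admissible in a diagram other than the one matching its direction, which is where non-periodicity is used and which, per the excerpt, is proved exactly as in \cite{octagon}. Granting that lemma, the rest is a formal unwinding of the commutation relations and the definition of the Farey branches. Since the excerpt explicitly says the proofs are identical to the octagon case in \cite{octagon}, I would present the argument at the level of detail above — stating the correspondence $w_k \leftrightarrow \tau_k$ with $\theta_{k+1} = F(\theta_k)$, invoking the uniqueness of the Farey expansion, and concluding — rather than reproving the uniqueness of the nested intersection or the admissibility lemma from scratch.
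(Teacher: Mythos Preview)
The paper does not give its own proof here --- it explicitly defers to \cite{octagon}, stating that ``we won't write down the proofs, since they are exactly the same as those in the case of the octagon.'' Your outline is correct and is precisely the argument carried out there: show inductively via the Central Proposition and Remarks \ref{commutation}--\ref{invariance} that $w_k = c(\tau_k)$ with $\tau_{k+1} = \Psi_\gamma\nu_{d_k}\tau_k$, so that $\theta_{k+1} = F(\theta_k)$ and hence $(d_k)_k$ is the Farey itinerary of $\theta$, and conclude by the uniqueness of the hexagon Farey expansion; the point you flag about non-periodicity ensuring a unique admissible diagram at each stage is exactly the lemma borrowed from \cite{octagon}.
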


It remains to give a simple outline of what happens in the
periodic case. The periodic case is based on the fact that
periodic cutting sequences come exactly from periodic trajectories
and they are related to the case of some directions for which we
have an ambiguity in writing down the Farey expansion, either
terminating with 1's or 5's.

\subsection{Characterization}

In this section we will introduce the full characterization of
cutting sequences in the set of all sequences $\alfab$.
Unfortunately, a straightforward characterization of the closure
of cutting sequences as the set of infinitely derivable sequences
(true in the case of the square) does not hold anymore. We will
cite the additional condition in two different ways: the former
following \cite{octagon} and introducing the generation rules and
the latter replacing them with the better know and more studied
substitutions, which will be in a smaller number than the
generation rules.

The generation is a combinatorial operation on words consisting of
interpolating letters in a word and it wants to be an inverse
operation to the derivation. We will denote this operator
$\mathfrak g_i^j$ and applied to a word $w$ admissible in diagram
$\mathscr D_i$ it will give us a word $W$ admissible in diagram
$\mathscr D_j$ and whose derivative is $W'=w$.

The first generation operator $\mathfrak g_i^0$ is obtained from
the diagrams:

\begin{align*}
\mathscr D_1 \qquad  &\xymatrix{ C \ar@/^1pc/[r] & A \ar@/^1pc/[l]
\ar@/^1pc/[r]^{\textbf{CB}}
 & B \ar@/^1pc/[l]^{\textbf{BC}} \ar@(ur,dr) }
&\mathscr D_2 \qquad  &\xymatrix{ C \ar@/^1pc/[r]^{\textbf{B}} & B
\ar@/^1pc/[l]^{\textbf{B}} \ar@/^1pc/[r]^{\textbf{BC}}
 & A \ar@/^1pc/[l]^{\textbf{CB}} \ar@(ur,dr)^{\textbf{CBBC}} } \\
\mathscr D_3 \qquad  &\xymatrix{ B \ar@/^1pc/[r]^{\textbf{B}} & C
\ar@/^1pc/[l]^{\textbf{B}} \ar@/^1pc/[r]^{\textbf{BBC}}
 & A \ar@/^1pc/[l]^{\textbf{CBB}} \ar@(ur,dr)^{\textbf{CBBC}} }
&\mathscr D_4 \qquad  &\xymatrix{ B \ar@/^1pc/[r]^{\textbf{C}} & A
\ar@/^1pc/[l]^{\textbf{C}} \ar@/^1pc/[r]^{\textbf{CBB}}
 & C \ar@/^1pc/[l]^{\textbf{BBC}} \ar@(ur,dr)^{\textbf{BB}} } \\
\mathscr D_5 \qquad  &\xymatrix{ A \ar@/^1pc/[r]^{\textbf{C}} & B
\ar@/^1pc/[l]^{\textbf{C}} \ar@/^1pc/[r]
 & C \ar@/^1pc/[l] \ar@(ur,dr)^{\textbf{BB}} } \\
\end{align*}

in the following way. Whenever we have a word admissible in
diagram $k$ we apply the operator $\mathfrak g_k^0$ by following
the path represented by the word in the corresponding diagram and
interpolating between letters the labels on each arrow.
Explicitly, if in the original word we have the transition
$L_1L_2$ and in the corresponding diagram the arrow that goes from
$L_1$ to $L_2$ has the label $w_1 w_2 w_3 w_4$ the new word will
have the subword $L_1 w_1 w_2 w_3 w_4 L_2$ instead.

It can be proved that for a $w$ admissible in diagram $\mathscr
D_k$, the new word $W=\mathfrak g_k^0 w$ is admissible in diagram
$\mathscr D_0$ and verify $W'=w$.

\begin{defin}
In a similar way we can define all the operators as
\[
\mathfrak g_j^i w=\pi_i^{-1} \cdot \mathfrak g_j^0 w.
\]
\end{defin}

Again, applied to a word admissible in diagram $\mathscr D_j$ it
gives a word admissible in diagram $\mathscr D_i$ whose derivative
is the original word.

Then we have the following characterization:

\begin{prop}
The sequence $w$ is the cutting sequence of a trajectory in direction
$\theta=[s_0;s_1,\dots]$ if and only if
\[
w \in \bigcap_{k\in \N} \mathfrak g_{s_1}^{s_0} \mathfrak
g_{s_2}^{s_1} \dots \mathfrak g_{s_{k-1}}^{s_{k-2}}\mathfrak
g_{s_k}^{s_{k-1}} u
\]
where $u$ is a word admissible in diagram $\mathscr D_{s_k}$.
\end{prop}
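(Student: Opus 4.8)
The plan is to show the two inclusions separately, reducing everything to the Central Proposition (\ref{central}) and its consequences already extracted in the excerpt, namely that for a cutting sequence $w=c(\tau)$ the derived sequence $n(w)'$ is again the cutting sequence of the trajectory $\nu_i^{-1}\Psi_\gamma\nu_i\tau$, whose direction is obtained from $\theta$ by the appropriate branch of the Farey map. I would first establish the forward direction: if $w=c(\tau)$ with $\theta=[s_0;s_1,\dots]$, then by the direction-recognition theorem the sequence of admissible diagrams satisfies $d_k(w)=s_k$ and the iterated derived sequences $w_k$ defined by $w_0=w$, $w_{k+1}=n(w_k)'$ are themselves cutting sequences, of trajectories in direction $F^k(\theta)\in\Sigma_{s_k}$. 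Since each $\mathfrak g_{j}^{i}$ is, by construction, an inverse of derivation taking words admissible in $\mathscr D_j$ to words admissible in $\mathscr D_i$ with $(\mathfrak g_j^i u)'=u$, one checks by an easy downward induction on $k$ that $w$ lies in $\mathfrak g_{s_1}^{s_0}\mathfrak g_{s_2}^{s_1}\cdots\mathfrak g_{s_k}^{s_{k-1}}(w_k)$, and $w_k$ is admissible in $\mathscr D_{s_k}$; hence $w$ belongs to the intersection with $u=w_k$.

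For the converse I would take $w$ in the displayed intersection and reconstruct the trajectory. Fix the itinerary $\{s_k\}\in S^*$ and let $\theta=[s_0;s_1,\dots]$ be the unique direction it determines (this is the content of the ``hexagon Farey expansion'' paragraph, with the same proof as in \cite{octagon}). The hypothesis says that for every $k$ there is a word $u_k$ admissible in $\mathscr D_{s_k}$ with $w=\mathfrak g_{s_1}^{s_0}\cdots\mathfrak g_{s_k}^{s_{k-1}}u_k$. Applying the derivation operator $k$ times and using $(\mathfrak g_j^i u)'=u$ together with the permutation-invariance of the sandwich property (Remark \ref{invariance}), one identifies $u_k$ with the $k$-th iterated derived sequence of $w$ in normal form, so $w$ is infinitely derivable with admissible-diagram itinerary exactly $\{s_k\}$. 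Then the theorem of the direction-recognition subsection applies verbatim and produces a trajectory $\tau$ in direction $\theta$ with $c(\tau)=w$.

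The step I expect to be the main obstacle is making the downward/upward induction between ``$w$ is in the intersection'' and ``$w$ is infinitely derivable with the prescribed itinerary'' genuinely airtight, because it requires knowing that the generation operators $\mathfrak g_j^i$ are not merely one-sided inverses of derivation but that on admissible words they are, in the relevant sense, two-sided: that every word admissible in $\mathscr D_i$ whose derived sequence is admissible in $\mathscr D_j$ is in the image of $\mathfrak g_j^i$. This is exactly the point where the characterization is subtler than in the Sturmian case (as the excerpt warns, ``a straightforward characterization of the closure of cutting sequences as the set of infinitely derivable sequences does not hold anymore''), so the bi-infinite nesting and the non-periodicity hypothesis used in the direction-recognition theorem must be invoked carefully; the periodic directions, where the Farey expansion is ambiguous between tails of $1$'s and tails of $5$'s, are handled by the separate remark at the end of that subsection and I would simply cite it. Apart from that, every ingredient is already in place, and the argument is, as for the octagon in \cite{octagon}, essentially a bookkeeping of the commuting diagram relating derivation, generation, the permutations $\pi_i$, and the branches of the Farey map.
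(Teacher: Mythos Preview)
The paper does not actually prove this proposition; it states it and, as with the rest of the characterization section, tacitly defers to \cite{octagon}. So there is no in-paper proof to compare against. That said, your outline has a genuine gap in the converse direction.

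Your reverse implication reduces membership in the intersection to ``$w$ is infinitely derivable with admissible-diagram itinerary $\{s_k\}$,'' and then invokes the direction-recognition theorem to produce a trajectory $\tau$ with $c(\tau)=w$. But that theorem, as stated in the paper, has as its \emph{hypothesis} that $w$ is already a (non-periodic) cutting sequence; its content is only that the direction is recovered from the itinerary. It does not, and cannot, manufacture a trajectory from an arbitrary infinitely derivable word --- the paper explicitly warns that infinite derivability alone does not characterize (the closure of) cutting sequences for the hexagon. The argument in \cite{octagon} for this direction is geometric: the nested images $\mathfrak g_{s_1}^{s_0}\cdots\mathfrak g_{s_k}^{s_{k-1}}u$ are shown to be cutting sequences of longer and longer trajectory segments with directions in the shrinking sectors $\Sigma[s_0;\dots;s_k]$, and a compactness/limit argument in the space of trajectories produces an actual $\tau$ in direction $\theta$ whose cutting sequence is $w$. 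Your bookkeeping is the right scaffolding, but the step ``produce $\tau$'' needs this construction, not the direction-recognition theorem.

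A smaller point: your forward direction also rests on the two-sided-inverse claim you flag. Knowing only $(\mathfrak g_j^i u)'=u$ does not yield $w=\mathfrak g_{s_1}^{s_0}(w_1)$; there can be several admissible preimages of $w_1$ under derivation, and that is precisely why the generation rules are an \emph{additional} condition beyond infinite derivability. For a genuine cutting sequence the identity $w=\mathfrak g_{s_1}^{s_0}(w_1)$ does hold, but it is established geometrically --- the interpolation labels in the generation diagrams were read off from the auxiliary-edge crossings in the proof of Proposition~\ref{central}, so they exactly reproduce the original coding --- not by formal inversion. Both directions therefore need the geometric meaning of $\mathfrak g_j^i$, not just its one-sided combinatorial property.
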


This same characterization can be restated in terms of
substitutions for both the hexagon and the octagonal case. It is
useful because they are more used and especially because they
allows us to give just five rules that can be applied in all cases
instead of five different diagrams for each of the six admissible
possibilities.

The idea is to notice that all diagrams have the same "shape" and
to give a label to each arrow in the shape as follows:
\[
\xymatrix{ \cdot \ar@/^1pc/[r]^{w_1} & \cdot
\ar@/^1pc/[l]^{\overline w_1} \ar@/^1pc/[r]^{w_2}
 & \cdot \ar@/^1pc/[l]^{\overline w_2} \ar@(ur,dr)^{w_3=\overline w_3}}
\]
where the conjugation means to read the label on the arrow from
the right to the left which is coherent with the labels of augmented diagrams.

Each arrow of a diagram corresponds to a transition in the word
represented by a path in such diagram and provided that from a
word admissible in diagram $\mathscr D_i$ we want a word
admissible in diagram $\mathscr D_j$ whose derivative is the
original word, we will define the substitution $\sigma_i$ on each
arrow $w_1$, $w_2$, $w_3$ recording how the interpolated word of
the diagram $\mathscr D_i$ gives a path in the diagram $\mathscr
D_j$. In other words, if an arrow $w_k$ links two letters $L_1$
and $L_2$ and carries the label $u_1 u_2$ we will record in
$\sigma_i(w_k)$ the "name" of the arrows corresponding in the
diagram $\mathscr D_j$ to the transitions $L_1 u_1$, $u_1 u_2$,
$u_2 L_2$. On the conjugated arrows they act as the same sequence
of arrows, but read from the right to the left and with each
element conjugated.

It can be seen that those substitutions are universal because
applying permutations of the letters does not interfere and hence
they can be obtained from the diagrams below.

A quick calculation show that they are

\begin{align*}
\sigma_1&
\begin{cases}
\sigma_1(w_1)=\overline w_1 \\
\sigma_1(w_2)=w_1 w_2 w_3 \\
\sigma_1(w_3)=w_3 \\
\sigma_1(\overline w_2)=\overline w_3 \overline w_2 \overline w_1 \\
\sigma_1(\overline w_1)=w_1 \\
\end{cases}
& & \sigma_2
\begin{cases}
\sigma_2(w_1)=w_2 w_3 \\
\sigma_2(w_2)=w_3 \overline w_2 \overline w_1 \\
\sigma_2(w_3)=w_1 w_2 w_3 \overline w_2 \overline w_1 \\
\sigma_2(\overline w_2)=w_1 w_2 w_3 \\
\sigma_2(\overline w_1)=w_3 \overline w_2 \\
\end{cases} \\
\sigma_3&
\begin{cases}
\sigma_3(w_1)=w_3 \overline w_2 \\
\sigma_3(w_2)=w_2 w_3 \overline w_2 \overline w_1 \\
\sigma_3(w_3)=w_1 w_1 w_3 \overline w_2 \overline w_1 \\
\sigma_3(\overline w_2)=w_1 w_2 w_3 \overline w_2 \overline w_1 \\
\sigma_3(\overline w_1)=w_2 w_3 \\
\end{cases}
& & \sigma_4
\begin{cases}
\sigma_4(w_1)=\overline w_2 \overline w_1 \\
\sigma_4(w_2)=w_1 w_2 w_3 \overline w_2 \\
\sigma_4(w_3)=w_2 w_3 \overline w_2 \\
\sigma_4(\overline w_2)=w_2 w_3 \overline w_2 \overline w_1 \\
\sigma_4(\overline w_1)=w_1 w_2 \\
\end{cases} \\
\sigma_5&
\begin{cases}
\sigma_5(w_1)=w_1 w_2 \\
\sigma_5(w_2)=\overline w_2 \\
\sigma_5(w_3)=w_2 w_3 \overline w_2 \\
\sigma_5(\overline w_2)=w_2 \\
\sigma_5(\overline w_1)=\overline w_2 \overline w_1 \\
\end{cases} & &
\end{align*}

The characterization becomes the following

\begin{prop}
The sequence $w$ is the cutting sequence of a trajectory in direction
$\theta=[s_0;s_1,\dots]$ if and only if there exists $w^{(k)}$ a
sequence of arrows in $\{w_1,w_2,w_3,\overline w_2, \overline
w_1\}$ such that
\[
w \in \bigcap_{k\in \N} \tau_{s_0} \sigma_{s_0} \sigma_{s_1} \dots
\sigma_{s_k} w^{(k)}
\]
where $\tau_{s_0}$ is the transformation of a sequence of arrows
back in letters by following the path they form in $\mathscr
D_{s_0}$.
\end{prop}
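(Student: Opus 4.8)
The plan is to deduce this substitution-based characterization from the generation-operator characterization (the preceding Proposition), which in turn already encodes the Central Proposition \ref{central} via the equality $W' = w$ for $W = \mathfrak g_k^0 w$. The key observation is that the substitutions $\sigma_i$ are nothing but the generation operators $\mathfrak g_i^0$ rewritten in the ``arrow alphabet'' $\{w_1, w_2, w_3, \overline w_1, \overline w_2\}$ instead of the letter alphabet $\{A,B,C\}$. So the argument is a bookkeeping translation, with the main content being that this translation is faithful (bijective on bi-infinite paths) and commutes with the operations involved.

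First I would set up the correspondence between the two encodings. A bi-infinite path in a diagram $\mathscr D_i$ is the same data as a bi-infinite word in the letter alphabet admissible in $\mathscr D_i$, but it is also the same data as the bi-infinite sequence of \emph{arrows} it traverses, i.e.\ an element of $\{w_1,w_2,w_3,\overline w_1,\overline w_2\}^{\Z}$ subject to the obvious ``head-to-tail'' consistency condition (the target vertex of one arrow equals the source of the next). The map $\tau_{s_0}$ in the statement is precisely the inverse of this: given a consistent arrow sequence, read off the vertices visited to recover the letter word, using the shape of $\mathscr D_{s_0}$. I would note that $\tau_i$ is a bijection from consistent arrow sequences to words admissible in $\mathscr D_i$, so switching freely between the two descriptions is legitimate.

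Next I would verify the core identity: for $w$ admissible in $\mathscr D_j$, writing $w$ as an arrow sequence, the arrow sequence obtained by applying $\sigma_j$ letterwise and then reading it back via $\tau_i$ (where $\mathscr D_i = \mathscr D_0$ in the base case, and the general case via the relabelling $\pi_i^{-1}$) equals $\mathfrak g_j^i w$. This is exactly the content of the definition of $\sigma_i$ given in the text: the arrow $w_k$ from $L_1$ to $L_2$ carries an interpolated label $u_1 u_2 \cdots$, and $\sigma_i(w_k)$ records the names of the arrows in the target diagram traversed by the path $L_1 u_1 u_2 \cdots L_2$; concatenating these over the whole word reproduces $\mathfrak g_j^0 w$ read as a path. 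The ``conjugation'' (reading arrows right-to-left with each element barred) handles the orientation-reversed arrows, and I would check this on the symmetric self-loop arrow $w_3 = \overline w_3$ separately. The claim that the $\sigma_i$ are \emph{universal} — independent of which permutation $\pi_k$ one has applied — follows because generation satisfies $\mathfrak g_j^i = \pi_i^{-1}\cdot \mathfrak g_j^0$ and permutations act on letters while leaving the arrow-shape (hence arrow names) fixed; I would cite Remark \ref{invariance} for the compatibility of the sandwich/derivation structure with relabelling.

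With the dictionary in place, the proposition follows by rewriting the previous characterization term by term. A word $w$ lies in $\bigcap_k \mathfrak g_{s_1}^{s_0}\cdots \mathfrak g_{s_k}^{s_{k-1}} u$ with $u$ admissible in $\mathscr D_{s_k}$; translating each $\mathfrak g$ into the corresponding $\sigma$ acting on arrow sequences, and pulling the final ``read-back'' map $\tau_{s_0}$ out to the front (all intermediate read-backs cancel against the next encoding), gives exactly $w \in \bigcap_k \tau_{s_0}\sigma_{s_0}\sigma_{s_1}\cdots\sigma_{s_k} w^{(k)}$ with $w^{(k)}$ the arrow-sequence encoding of a word admissible in $\mathscr D_{s_k}$ — noting the index shift, since $\sigma_{s_0}$ on arrows plays the role of $\mathfrak g_{s_1}^{s_0}$ after the initial $\tau$. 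The equivalence of the two intersection conditions then gives the result. The main obstacle I anticipate is purely clerical rather than conceptual: checking that the index conventions line up (the $\sigma_{s_0}\cdots\sigma_{s_k}$ versus $\mathfrak g_{s_1}^{s_0}\cdots\mathfrak g_{s_k}^{s_{k-1}}$ shift) and that the conjugation convention on reversed arrows is applied consistently, especially at the self-loop where barring is trivial. I would also remark that the substitutions are ``in a smaller number'' — five rather than six diagrams times six permutations — precisely because of the universality just established, which is the practical payoff of the reformulation.
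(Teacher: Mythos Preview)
The paper gives no formal proof of this proposition: it is offered as a direct restatement of the preceding generation-operator characterization, with the only justification being the paragraphs immediately before, which explain that all diagrams $\mathscr D_i$ share the same shape and that the substitutions $\sigma_j$ record, in the arrow alphabet, exactly the interpolation performed by the generation operators. Your proposal fleshes out precisely this sketch---the bijection between admissible letter-words and consistent arrow-sequences, the identification of $\sigma_j$ with $\mathfrak g_j^\bullet$ at the arrow level, and the use of permutation-invariance of arrow names (via $\mathfrak g_j^i=\pi_i^{-1}\cdot\mathfrak g_j^0$ and Remark~\ref{invariance}) to absorb the superscripts---so it matches the paper's intended argument.

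One correction on the bookkeeping you flag at the end. Your claim that ``$\sigma_{s_0}$ on arrows plays the role of $\mathfrak g_{s_1}^{s_0}$'' is not right: by the paper's own construction, $\sigma_j$ encodes the interpolation labels attached to diagram $\mathscr D_j$, i.e.\ it corresponds to $\mathfrak g_j^\bullet$ (source index $j$, target index irrelevant in arrow language), not to $\mathfrak g_\bullet^{\,j}$. Translating $\mathfrak g_{s_1}^{s_0}\cdots\mathfrak g_{s_k}^{s_{k-1}}$ step by step therefore yields $\tau_{s_0}\,\sigma_{s_1}\cdots\sigma_{s_k}$, with no $\sigma_{s_0}$ term. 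The extra $\sigma_{s_0}$ in the displayed statement is most plausibly a slip in the paper---note that only $\sigma_1,\dots,\sigma_5$ are ever defined, whereas $s_0$ is allowed to be $0$---rather than a genuine shift your dictionary needs to account for.
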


\subsection{Teichm\"uller disk} \label{teich}

\subsubsection{Marked translation surfaces and its actions}

The derivation on cutting sequences is more than a mere
combinatorial operator on it. As we will explain in this section,
we can somehow identify the deformations of a given translation
surface with the tangent bundle of the hyperbolic space (the
Teichm\"uller disk), on which we will define a cutting sequence
for a trajectory with respect to a tiling induced by a fundamental
domain for the action of the Veech group on it. Then, we will
relate the derivation previously defined and the new kind of
cutting sequences.

First of all we need hence to explain what space are we going to
work on. Given $S$ translation surface, we can consider the space
of \emph{marked translation surfaces} as the space of translation
surfaces correlated to $S$ as follow:

\begin{defin}
If $S$ is a translation surface and $f \colon S \to S'$ is an
affine diffeomorphism from $S$ to the translation surface $S'$ such that
the area of $S$ is equal to the area of $S'$ we say that $S'$ is
marked by $S$ and identify the triples $f,S,S'$ simply with $[f]$
with the convention that the function determines its domain and
codomain.
\end{defin}

\begin{defin}
We will also say that $[f]=f \colon S \to S'$ and $[g]=g \colon S
\to S''$ are affinely equivalent if there exists a translation
equivalence $[h]=h \colon S' \to S''$ such that $g=h \circ f$ and
we will write $[f]_A$ for the affine equivalence class of the
triple $[f]$.

Moreover, $[f]$ and $[g]$ are equivalent up to isometry if there
exists an isometry $[h]=h \colon S' \to S''$ such that $g=h \circ
f$ is an isometry and we will write $[f]_I$ for the isometry
equivalence class of the triple $[f]$.
\end{defin}

We will denote $\mathscr M_A(S)$ the set of translation surfaces
marked by $S$ up to affine equivalence and $\mathscr M_I(S)$ the
set of translation surfaces marked by $S$ up to isometry.

A simpler description of them can be made considering the
following:

\begin{prop}\label{ident}
The set $\mathscr M_A(S)$ can be identified with $SL_\pm(2,\R)$
and $\mathscr M_I(S)$ is isomorphic to $\H \simeq \D$.
\end{prop}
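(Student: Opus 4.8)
The plan is to exhibit explicit maps in both directions and check they are inverse bijections, respecting the relevant equivalence relations. For the first identification, fix $S$ as base surface and let $[f]=f\colon S\to S'$ be a marked translation surface of the same area. Since the kernel of the Veech homomorphism $V$ is trivial (Lemma on $\aff(S_E)$), and more generally since affine diffeomorphisms are determined up to translation equivalence by their derivative, I would send $[f]_A$ to $D f\in GL(2,\R)$. The area condition forces $|\det Df|=1$, so the image lies in $SL_\pm(2,\R)$. Conversely, given $\eta\in SL_\pm(2,\R)$, the canonical deformation $\Phi_\eta\colon S\to \eta\cdot S$ is an affine diffeomorphism with derivative $\eta$, area-preserving, hence defines a class $[\Phi_\eta]_A$. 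The two constructions are mutually inverse: $D\Phi_\eta=\eta$ gives one direction, and for the other I must check that any $f$ with $Df=\eta$ is affinely equivalent to $\Phi_\eta$, i.e. that $f\circ\Phi_\eta^{-1}\colon \eta\cdot S\to S'$ is a translation equivalence — which holds precisely because its derivative is the identity, and an affine map with identity derivative is (up to cutting the polygons into smaller pieces, as recalled in the discussion of translation equivalences) a piecewise translation.

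For the second identification, I would pass to the quotient. The set $\mathscr M_I(S)$ is $\mathscr M_A(S)$ further quotiented by post-composition with isometries, i.e. by the condition that $h\circ f$ be an isometry with derivative in $O(2)$. Under the identification $\mathscr M_A(S)\simeq SL_\pm(2,\R)$, this amounts to quotienting $SL_\pm(2,\R)$ on the left by $O(2)\cap SL_\pm(2,\R)=O(2)$ (the full orthogonal group, since all its elements have determinant $\pm1$). So I would show $\mathscr M_I(S)\simeq O(2)\backslash SL_\pm(2,\R)$ and then identify this double-looking quotient with $\H$. Restricting to the orientation-preserving part, $SO(2)\backslash SL(2,\R)\simeq \H$ is the classical fact that $SL(2,\R)$ acts transitively on the upper half-plane by Möbius transformations with stabilizer $SO(2)$ at $i$; adding the orientation-reversing component of $O(2)$ (a single reflection) collapses the two components of $SL_\pm(2,\R)$ onto the same copy of $\H$, so the quotient is still $\H$, and $\H\simeq\D$ is the Cayley transform. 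I would phrase this via the map sending $\eta\in SL_\pm(2,\R)$ to $\eta\cdot i\in\H$ (reading $\eta$ as a Möbius map, using $|\det|=1$), checking surjectivity, and checking that $\eta_1,\eta_2$ have the same image iff they differ by left multiplication by an element of $O(2)$.

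The routine steps are the Möbius-action computations and the Cayley transform; the step I expect to be the genuine content is verifying that the derivative map is well-defined on \emph{equivalence classes} and injective there — that is, the claim that two area-preserving affine diffeomorphisms out of $S$ with the same derivative are necessarily related by a translation equivalence on the target. This is exactly where the structure of translation surfaces (as opposed to general affine surfaces) is used: an affine self-map of a translation surface with derivative $\id$ need not be the identity, but it is a translation equivalence, and one must invoke the cut-and-paste description of such maps carefully so that the passage to $\mathscr M_A$ and $\mathscr M_I$ is clean. I would treat this as the main obstacle and handle it first, then the two quotient identifications follow formally.
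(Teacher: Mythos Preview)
Your approach is correct and is essentially what one would do; the paper itself does not prove this proposition but treats it as a standard fact, only spelling out the two maps $\eta\mapsto[\Phi_\eta]_A$ and $[f]_A\mapsto Df$ in the paragraph that follows (when describing the left $SL_\pm(2,\R)$-action). So there is no detailed proof to compare against beyond that sketch, which matches yours.

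Two small points worth tightening. First, the triviality of $\ker V$ for $S_E$ is a red herring here and invoking it risks confusion: what you actually need (and correctly state immediately afterwards) is that \emph{any} affine diffeomorphism between translation surfaces with derivative the identity is a translation equivalence --- and this is simply the paper's definition of translation equivalence, valid for every $S$, not just those with trivial $\ker V$. The proposition is stated for general $S$, so drop the reference to that lemma. Second, when you write ``sending $\eta\in SL_\pm(2,\R)$ to $\eta\cdot i\in\H$'': if $\det\eta=-1$ the ordinary M\"obius formula $z\mapsto\frac{az+b}{cz+d}$ sends $i$ to the \emph{lower} half-plane, so one must either conjugate $z$ first or, as the paper itself does a few paragraphs later, identify the two components of $SL_\pm(2,\R)$ via a fixed reflection before passing to $\H$. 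This is cosmetic, and your decomposition $O(2)\backslash SL_\pm(2,\R)\simeq SO(2)\backslash SL(2,\R)\simeq\H$ is already the clean way to handle it.
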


Here we are considering $\H$ as the upper half plane $\H=\{z \in
\C \mid \Im z>0\}$ and $\D=\{z \in \C \mid |z| <1\}$, identified
to each other by $\phi \colon \H \to \D$ defined as
$\phi(z)=\frac{z-i}{z+i}$.

We have two important actions on $\mathscr M_A(S)$ seen as the
group of matrices. The first one we want to talk about is the left
action of $SL_\pm(2,\R)$ and in order to describe it, it is useful
to give the explicit way of associating to a matrix a marked
translation surfaces and the converse map, that is actually
something we already talked about.

Indeed, we explained in \ref{Veech} that if we have a matrix $\eta
\in SL_\pm(2,\R) \subset GL(2,\R)$ and a translation surface $S$, then
we have a canonical map $\Phi_\eta \colon S \to \eta \cdot S$ such
that $D\Phi_\eta=\eta$ and exactly that triple $[\Phi]$ is our
marked translation surface in $\mathscr M_A(S)$. Conversely, to
each map $f \colon S \to S'$ we associated the matrix $Df$.

Therefore the action of a new matrix $\nu \in SL_\pm(2,\R)$ on a
class of translation surfaces $[f]$ is the composition of the
canonical map $\Psi_\nu \colon S' \to S''$ with $f$, obtaining
$\Psi_\nu \circ f \colon S \to S''$. Using the identification in
Proposition \ref{ident} and hence representing the set of marked
translation surfaces as a space of matrices, the action is
naturally the mere multiplication between matrices, applied on the
left.

We have a second action on $\mathscr M_A(S)$ which is the action
of $\aff(S)$. For an element $\Psi \colon S \to S$ in $\aff(S)$ it
corresponds to the composition $f \Psi \colon S \to S'$. Again,
using the identification in \ref{ident}, that action is the
multiplication on the right by the matrix $D\Psi$ representing the
affine automorphism $\Psi$.

As one can easily see considering the actions as multiplications
between matrices, the two actions naturally commute.

Let us now consider the action on $\mathscr M_A(S)$ of the
particular subgroup $g_t$ of $SL(2,\R)$ defined by

\[
g_t=\begin{pmatrix}
e^{\frac{t}{2}} & 0 \\
0 & e^{-\frac{t}{2}}
\end{pmatrix}.
\]

That particular action is called the Teichm\"uller flow and it can
be proved that if we project $\mathscr M_A(S)$ in $\mathscr
M_I(S)$ and use again the identification \ref{ident} it projects
to the hyperbolic geodesic flow in the best way we can expect. In
fact $g_t$-orbits in $\mathscr M_A(S)$ projects to unit speed
parametrized geodesics in $\H$ and we will call them
\emph{Teichm\"uller geodesics}.

We can explicitly give such an action writing down that the orbit
of a marked translation surface $\Phi_\nu \colon S \to \nu \cdot
S$ is the geodesic formed by the marked translation surfaces
$[\Phi_{g_t \nu}]$ as $t \in \R$.

Moreover, let us now consider the map from $\mathscr M_A(S) \cong
SL_\pm(2,\R)$ to $T_1\H$ defined as follows: to a triple
$\nu=[f]_A=\Phi_\nu \colon S \to \nu \circ S$ we associate the
pair made of the point of $\H$ given by the isometry class of the
element $[f]_I=p_f$ and of the unit tangent vector in such point to
the geodesic obtained by projecting the orbit of $[f]_A$ under the
$g_t$-action as explained before, i.e. the element $([f]_I=p_f,
D\Phi_{g_t \nu} \mid _{t_0})$ where $t_0$ is the time of the flow
such that $g_{t_0}=p_f$.

Such a map is a surjective 4 to 1 map. For simplicity, we will
firstly identify the two copies of the upper and lower half plane
$\H^+$ and $\H^-$ and then make the quotient with the Kernel of
the new map $\{\pm \id \}$ and hence consider the marked
translation surfaces as $PSL(2,R)$ obtaining a 1 to 1 map in $T^1
\H$ and hence an identification of the two spaces.

We now return to our case of the surface obtained from the hexagon
$S_E$ and consider the marked translation surfaces affinely
equivalent to $S_E$, identified with $T^1 \D$ and by the
correspondence with $PSL(2,\R)$. The centre of the disk will be
identified with the identity matrix and hence with the triple $\id
\colon S_E \to S_E$. As we saw before, we have the action of the
Veech group on it and it turns out that we can explicitly give a
fundamental domain for such an action.

\begin{figure}[h]
\centering
\includegraphics[width=0.6\textwidth]{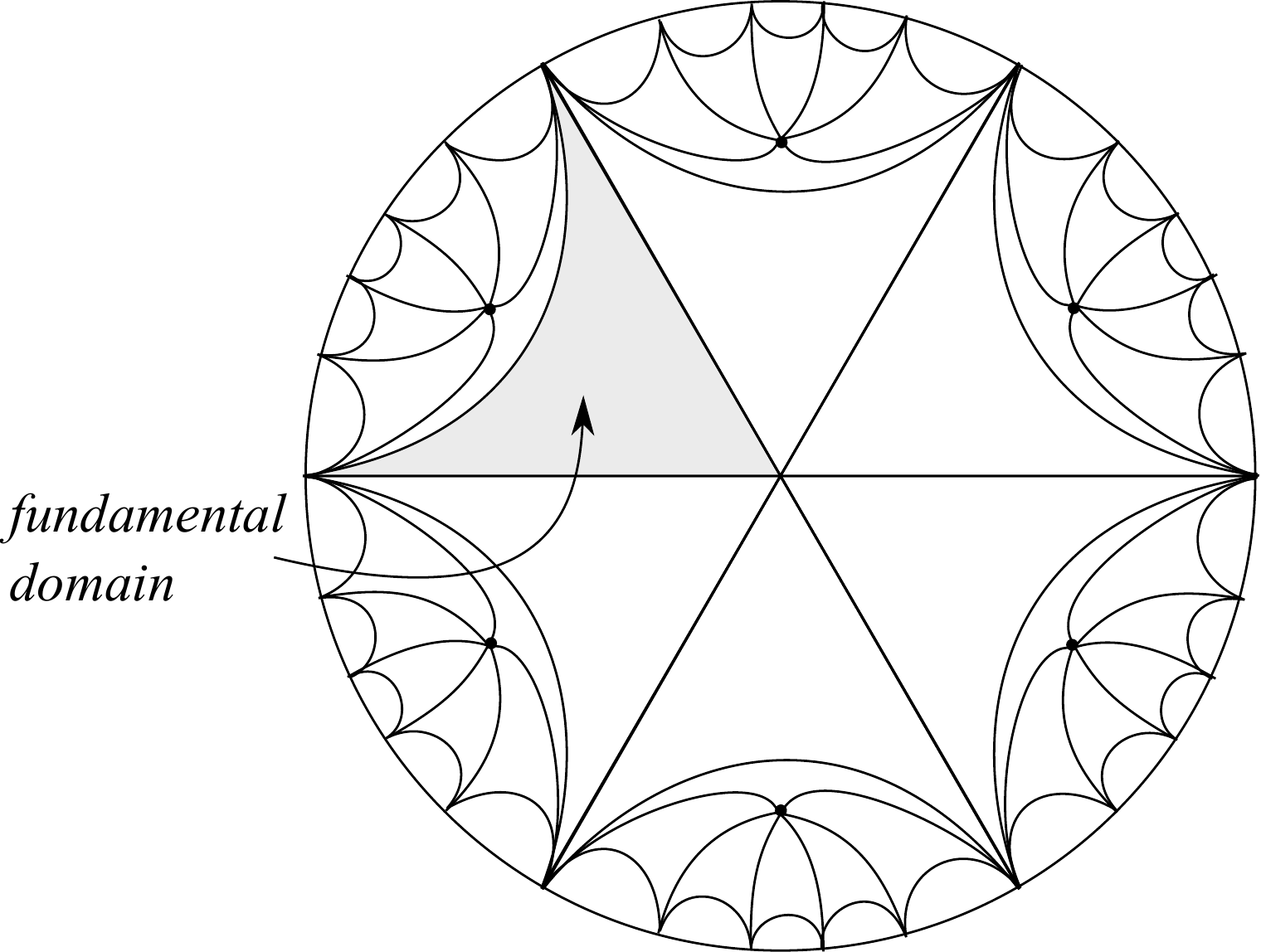}
\caption{Fundamental domain tessellation} \label{fundtiling}
\end{figure}

Let us consider the hyperbolic triangle formed by the horizontal
line between the centre of the disk and the extreme left point of
the boundary, the line between the boundary and the centre forming
angle $\frac{\pi}{3}$ with the previous one and the third geodesic
joining the two boundary points as in Figure \ref{fundtiling}. We
will call it $\mathscr F$ and we affirm that it is a fundamental
domain for the action of the Veech group. Indeed, the three
generators of such group $\alpha, \beta, \gamma$ are hyperbolic
reflections with respect to the three sides of $\mathscr F$ and
hence the Veech group is the triangular group $\Delta
(3,\infty,\infty)$.

In fact if we apply on the left (being a left action) $\alpha$ to
an element of $\H$ seen as a pair and then send it in $\D$ with
the map describe before, we have that point reflected in the disk
with respect to the horizontal side. In the same way, applying
$\beta$ means reflecting with respect to the geodesic passing for
the centre of the disk and forming angle $\frac{\pi}{3}$ with the
horizontal left oriented line and $\gamma$ is the reflection with
respect to the third side.

The images under the Veech group of the fundamental domain give a
tiling of the Teichm\"uller disk.

\subsubsection{Teichm\"uller geodesics and hexagonal tiling}

We will now introduce a special type of geodesics in the
Teichm\"uller disk and a second tiling on it and explain how we
can code the flow of such a geodesic with respect to such a
tiling. In the next paragraph, we will see how that is related
with the coding of trajectories in the surface $S_E$ and
particularly with the derivation.

Let us fix a direction $\theta$, and think about it as the
direction of a trajectory $\tau$ in $S_E$. We can hence consider
the subgroup conjugate to the geodesic flow introduced before

\[
g_t^\theta= \rho^{-1}_{\frac{\pi}{2}-\theta} \circ g_t \circ
\rho_{\frac{\pi}{2}-\theta}.
\]

Geometrically, as $t$ increase, since $g_t$ expands the horizontal
direction and contracts the vertical one, and the conjugation with
the counterclockwise rotation of angle $\frac{\pi}{2}-\theta$
sends the direction $\theta$ in a vertical position and sends it
back, such a subgroup contracts the direction $\theta$ and expands
the orthogonal direction.

Let us now call \emph{Teichm\"uller geodesic ray}
\[
\tilde r_\theta=\{g_t^\theta \cdot S_E\}_{t \geq 0}
\]
which is a geodesic ray in $T_1\D$ and denote $r_\theta$ its
projection on $\D$. Naturally, $r_\theta$ is a geodesic ray in
$\D$, starting in the centre of the disk and converging to a point
of the boundary. The mere calculation of the matrix
\[
g_t^\theta=
\begin{pmatrix}
e^t \sin^2 \theta +e^{-t}\cos^2 \theta & -e^t \sin \theta
\cos \theta +e^{-t}\sin \theta \cos \theta \\
-e^{t}\sin \theta \cos \theta e^{-t} \sin \theta \cos \theta & e^t
\cos^2 \theta +e^{-t} \sin ^2 \theta
\end{pmatrix}
\]
and its action on $i$ (which means imposing the geodesic to pass
for $i$), shows that the limit as $t \to \infty$ is the boundary
point $-\tan \theta \in \partial \H$ which is sent by the
identification of $\H$ and $\D$ in the boundary point
$e^{i(\pi-2\theta)}$ since it is

\begin{align*}
-(\cos^2 \theta - \sin^2 \theta)+2i\cos \theta \sin \theta &=-\cos
2\theta +i \sin 2\theta= \\
&=cos(\pi-2\theta)+i \sin(\pi-2\theta)= \\
&=e^{i(\pi-2\theta)}.
\end{align*}

This makes it easier to understand how the angle $\theta$
parametrizes the boundary and equivalently the geodesic rays
coming out from the centre of the disk: the ray $r_0$ is the one
corresponding to the horizontal line oriented on the left and the
generic ray $r_\theta$ is the one which form a clockwise angle of
$2\theta$ with the ray $r_0$. This means that as $\theta$ run
along the interval that we always considered up to apply a
$\pi$-rotation, it parametrize all the boundary of the disk.

If we now return to consider the image of the fundamental domain
$\mathscr F$ under the Veech group, we can naturally act with the
six elements $\nu_i$ explained in the first section. We call $E_0$
the side connecting the two boundary points and we can define
$E_1, \dots, E_5$ as $E_i=E_0 \nu_i$ meaning the right action of
$\nu_i$ on each point composing the side. We hence have an
hyperbolic hexagon $\mathscr E$ limited by the sides $E_0, \dots
E_5$ and by acting on it with the Veech group we obtain a new
tessellation of the disk $\D$ the \emph{ideal hexagon
tessellation}, obtained by the previous one deleting the tile
sides joining an image of the centre of the disk, i.e. consider as
one unique tile all six triangles sharing one common vertex as in
Figure \ref{hextiling}.

\begin{figure}[h]
\centering
\includegraphics[width=0.6\textwidth]{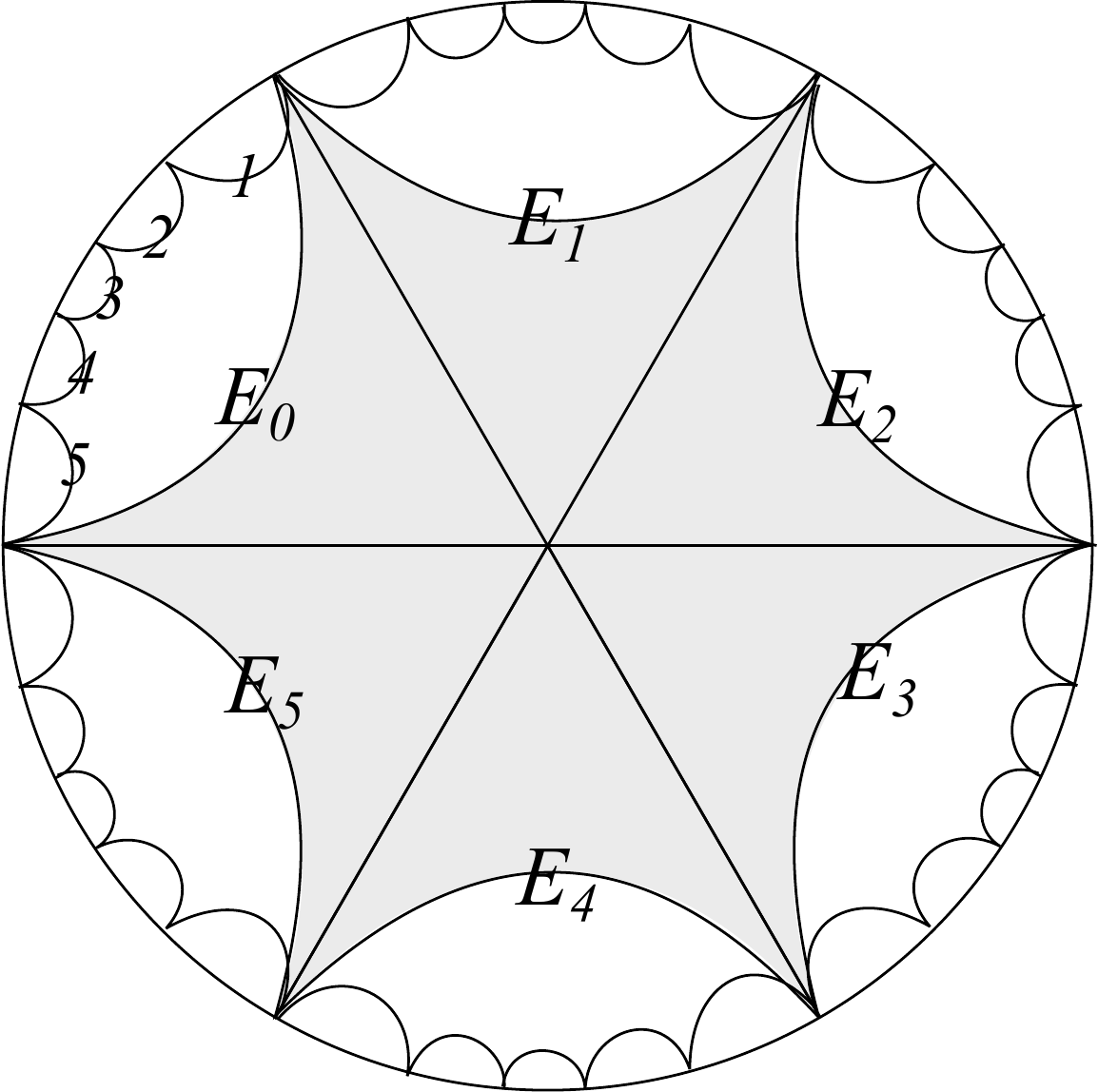}
\caption{Ideal hexagons tessellation} \label{hextiling}
\end{figure}

\begin{rk}\label{angolo}
For the parametrization of the boundary with the angle as
described before, we have that a Teichm\"uller geodesic ray
$r_\theta$ crosses the side $E_i=E_0 \nu_i$ if and only if $\theta
\in \Sigma_i=\nu_i^{-1}\Sigma_0$.
\end{rk}

Such an ideal hexagon tessellation allows us to construct a tree
related to it. It will be useful to explain the propositions in
the following section.

The set of vertices of such a tree is the set of centres of the
ideal hexagons, i.e. the images under the action of the Veech
group of the centre of the disk. The edges will be geodesic
segments connecting two centres if and only if the two hexagons
share a common side. Naturally, we can consider such a tree not
embedded in the hyperbolic disk and have segments joining the
vertexes. We will have hence a regular tree $\mathscr T$ with six
edges coming out of each vertex.

To each Teichm\"uller geodesic ray we can associate a path in the
tree $\mathscr T$ that we will call \emph{combinatorial geodesic
$P_\theta$ approximating $r_\theta$}. If $r_\theta $ does not
terminate in a vertex of the tessellation (and it can be proved
that it corresponds to the case of periodic trajectories that we
didn't consider since the beginning), it intersects an infinite
number of sides of ideal hexagons. We associate to it a path
starting from the centre of the disk and crossing all the couple
of vertexes which are the two centres of the two hexagons which
share the side crossed by $r_\theta$.

\subsubsection{Teichm\"uller cutting sequences}

By giving a label to each segment of the tessellation with ideal
hexagon, we can assign to each Teichm\"uller geodesic a
\emph{Teichm\"uller cutting sequence} by coding the intersections
of the geodesic with the labels assigned to each side. It turns
out that there is an intimate connection between the derivation
that we used in the first part of the work on cutting sequences on
trajectories in the surface $S_E$ and the Teichm\"uller cutting
sequences.

This is a similar procedure to the one described by Series in
\cite{squarehyp}.

In order to label the sides of the tiling, we introduce the
subgroup of the Veech group $V_{\mathscr E}$. It is the subgroup
generated by the hyperbolic reflections with respect to the sides
of $\mathscr E$, that can be obtained by conjugating the reflection
with respect to the side $E_0$ (which is our matrix $\gamma$) by
the matrices $\nu_i$ introduced at the beginning of the work.
$V_{\mathscr E}$ in hence the subgroup generated by
$\gamma_i=\nu_i^{-1} \gamma \nu_i$ for $i=1, \dots 5$.

The hexagon $\mathscr E$ is a strict fundamental domain for such a group and
this allows us to label each side of the tiling in the following
way: given a side, there is an element of $V_{\mathscr E}$ that
sends it back in a side $E_i$ of $\mathscr E$ and we label such a
side of the tiling with the number $i \in \{0,\dots,5\}$.

We can also use it to give a labelling on the tree $\mathscr T$ by
labelling each edge with the number of the side of the tiling
transversal to it.

\begin{rk} \label{idealhex}
For each side shared by two ideal hexagons, if it is labeled with
$i$, it will exists $n \in \N$ and a sequence
$\{s_k\}_{k=0,\dots,n} \in \{0,\dots,5\}$ such that one ideal
hexagon is $\mathscr E \gamma_{s_n} \gamma_{s_{n-1}} \dots
\gamma_{s_1} \gamma_{s_0}$ and the other one is $\mathscr E
\gamma_i \gamma_{s_n} \gamma_{s_{n-1}} \dots \gamma_{s_1}
\gamma_{s_0}$.
\end{rk}

Given a geodesic in the Teichm\"uller disk, we define the
\emph{Teichm\"uller cutting sequence} associated to it as the
sequence of labels of sides of the ideal hexagon tessellation
crossed by the geodesic. In particular, we can do that for a
Teichm\"uller geodesic ray $r_\theta$ and associate to it
$c(r_\theta) \in \{0,\dots,5\}^{\N}$. It represents also the
labels of the edges forming the path $p_\theta$ in $\mathscr T$
correspondent to the ray $r_\theta$.

\begin{rk}
It is evident that since a cutting sequence never returns back, in
a Teichm\"uller cutting sequence the same label never occurs twice
in a row. Moreover, it can be seen that it is the only restriction
that one needs on an element of $\{0,\dots,5\}^\N$ for being a
Teichm\"uller cutting sequence.
\end{rk}

In exactly the same way as in \cite{octagonteich} we can prove the
relation between the Teichm\"uller cutting sequences of a ray
$r_\theta$ and the cutting sequence of a trajectory in direction
$\theta$,as in the following proposition, which gives a
geometrical interpretation of  the derivation on a cutting
sequence.

Let us consider $w$ cutting sequence of a trajectory $\tau$ in
direction $\theta$, its $k$-derived sequence $w^{(k)}$ and the
corresponding Teichm\"uller geodesic ray $r_\theta$ with
Teichm\"uller cutting sequence $c(r_\theta)$.

We define for each $k \geq 1$ the element of the Veech group
$\gamma^{(k)}$ as the composition of $\gamma_{s_{k-1}} \gamma_{s_{k-2}} \dots \gamma_{s_0}$ where $s_0, \dots s_{k-1}$ are
the first entries of the Teichm\"uller cutting sequence
$c(r_\theta)$. Such an element sends the $k$-th ideal hexagon
crossed by $r_\theta$ back in $\mathscr E$ by Remark
\ref{idealhex}. As we did before for the elements of the Veech
group, we have a correspondent affine automorphism
$\Psi_{\gamma^{(k)}} \colon S_E \to S_E$ whose derivative is
$\gamma^{(k)}$ as the composition of the canonical map
$\Phi_{\gamma^{(k)}} \colon S_E \to \gamma^{(k)} \cdot S_E$ and
the cut and paste map $\Upsilon^{(k)} \colon \gamma^{(k)} \cdot
S_E \to S_E$. On the original hexagon this automorphism consists
in applying an affine deformation and then cutting and pasting it
back in the original hexagon. Such deformation stretches the
direction $\theta$ in the hexagon.

\begin{prop}\label{fact3}
The $k$-th derived sequence $w^{(k)}$ of a cutting sequence $w$ of
a trajectory $\tau$ in direction $\theta$ in $S_E$ is the cutting
sequence of the same trajectory with respect to the sides of
$\Psi_{\gamma^{(k)}}E$ with the labelling induced by
$\Psi_{\gamma^{(k)}}$.
\end{prop}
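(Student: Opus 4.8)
The plan is to prove Proposition \ref{fact3} by induction on $k$, with the inductive step being a single application of the Central Proposition in its all-sector form. Before starting, I would reduce the statement to one about trajectories on the standard hexagon: the relabelling carried out in the fifth step of the proof of Proposition \ref{central} (realising $B'\leftrightarrow B$, $C'\leftrightarrow C$ by $\gamma$), performed along the whole composition $\Psi_{\gamma^{(k)}}$, identifies the cutting sequence of $\tau$ read on the sheared hexagon $\Psi_{\gamma^{(k)}}E$ (with the induced labelling) with the cutting sequence, on the original labelled hexagon $E$, of the deformed trajectory $\tau_k:=\Psi_{\gamma^{(k)}}\tau$. So it suffices to show $c(\tau_k)=w^{(k)}$; for $k=0$ this is trivial, since $\gamma^{(0)}=\id$ and $w^{(0)}=w=c(\tau)$.

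For the inductive step, assume $c(\tau_k)=w^{(k)}$. Then $w^{(k)}$ is a cutting sequence, so by Lemma \ref{csadmissible} it is admissible in some diagram, and (for non-periodic $\theta$, which is implicit since the $s_k$ must be defined) in a unique one, $\mathscr D_{d_k}$; hence the direction of $\tau_k$ lies in $\Sigma_{d_k}$, and by Remarks \ref{angolo}--\ref{idealhex} and the direction-recognition theorem this index $d_k$ coincides with the $k$-th label $s_k$ of the Teichm\"uller cutting sequence $c(r_\theta)$ used to build $\gamma^{(k)}$. Now apply the Central Proposition to $\tau_k$: normalising its direction into $\Sigma_0$ by $\nu_{s_k}$ and combining Proposition \ref{central} with Remark \ref{commutation} and the relabelling invariance of Remark \ref{invariance}, one gets that $(w^{(k)})'=w^{(k+1)}$ is the cutting sequence of $\Psi_{\gamma_{s_k}}\tau_k$, where $\Psi_{\gamma_{s_k}}=\nu_{s_k}^{-1}\Psi_\gamma\nu_{s_k}$ is the affine automorphism with derivative $\gamma_{s_k}=\nu_{s_k}^{-1}\gamma\nu_{s_k}$. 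Since the kernel of the Veech homomorphism $V$ is trivial, an affine automorphism is determined by its derivative, so $\Psi$ is multiplicative on the Veech group; therefore $\Psi_{\gamma_{s_k}}\circ\Psi_{\gamma^{(k)}}=\Psi_{\gamma_{s_k}\gamma^{(k)}}=\Psi_{\gamma^{(k+1)}}$, whence $\Psi_{\gamma_{s_k}}\tau_k=\tau_{k+1}$ and $c(\tau_{k+1})=w^{(k+1)}$. This closes the induction, and the reformulation above then gives the statement of Proposition \ref{fact3}.

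The step that needs the most care — and the main obstacle — is the index bookkeeping. One must keep straight the three parallel indexings that occur (the diagram indices $d_k(w)$ of the successive derived sequences, the labels $s_k$ of the Teichm\"uller cutting sequence of $r_\theta$, and the digits of the hexagon Farey expansion of $\theta$), verify that they agree, and track the permutations $\pi_{d_k}$ introduced by the normal form when one passes between the plain iterated derivatives and the normalised ones, so that the composite of the maps furnished by the successive applications of the Central Proposition is genuinely the automorphism $\Psi_{\gamma^{(k)}}$ defined from the ideal-hexagon tessellation. This is cleanest when $\theta\in\Sigma_0$, the general case being reduced to it by the normal form exactly as in the discussion following Proposition \ref{central}. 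Once this matching is in place there is no further content: Proposition \ref{fact3} is simply the $k$-fold iterate of the single geometric fact packaged in Proposition \ref{central}, now read along the Teichm\"uller geodesic ray via Remark \ref{idealhex}.
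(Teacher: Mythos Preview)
Your proposal is correct and follows essentially the same approach as the paper: reformulate the statement as $c(\tau^{(k)})=w^{(k)}$ for a sequence of trajectories $\tau^{(k)}$ on the standard hexagon, then prove this by induction on $k$, with the inductive step being one application of Proposition~\ref{central} after normalising via $\nu_{s_k}$ and invoking Remarks~\ref{commutation}, \ref{invariance}, and~\ref{angolo}. The paper's proof is a bit terser --- it appeals to Remark~\ref{angolo} directly to pin down the sector of $\tau^{(k)}$ as $\Sigma_{s_k}$, whereas you spend more effort reconciling the diagram indices $d_k$, the Teichm\"uller labels $s_k$, and the Farey digits --- but the substance is identical. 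One small slip to watch: reading $\tau$ on $\Psi_{\gamma^{(k)}}E$ corresponds to reading $\Psi_{\gamma^{(k)}}^{-1}\tau$ (not $\Psi_{\gamma^{(k)}}\tau$) on $E$; the paper's own exposition is loose on this same point, and since each $\gamma_{s_j}$ is an involution the recursion $\tau^{(k+1)}=\Psi_{\gamma_{s_k}}\tau^{(k)}$ is unaffected, but you should state the reformulation with the inverse.
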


\begin{proof}
Saying that $w^{(k)}$ is the cutting sequence of $\tau$ with
respect to $\Psi_{\gamma^{(k)}}E$ is equivalent to say that
$w^{(k)}$ is the cutting sequence of $\Psi_{\gamma^{(k)}}^{-1}
\tau$ with respect to $E$.

We prove it by induction. For the case $k=0$ it is just
$w=c(\tau)$.

For $k>0$ let's set

\[
\begin{cases}
\tau_0=\tau \\
\tau^{(k)}=\Psi_{\gamma^{(k)}}^{-1} \tau=\Psi_{\gamma_{s_{k-1}}}
\dots \Psi_{\gamma_{s_0}} \tau.
\end{cases}
\]

Our aim becomes to show that $w^{(k)}$ is the cutting sequence of
$\tau^{(k)}$ for each $k>0$.

We now assume as inductive hypothesis that $w^{(k)}$ is the
cutting sequence of $\tau^{(k)}$.

The trajectory $\tau^{(k)}$ has direction in $\Sigma_{s_k}$ for
Remark \ref{angolo}. This means that $\nu_{s_k} \tau^{(k)}$
has direction in $\Sigma_0$ and hence its cutting sequence is
$\pi_{s_k} \cdot w^{(k)}$ for \ref{commutation}. Let us now derive
once more in order to obtain $w^{(k+1)}$. We have
\[
(\pi_{s_k} \cdot w^{(k)})'=c(\Psi_\gamma \nu_{s_k}\tau^{(k)})
\]
for Proposition \ref{central}. Now we act with
$\nu_{s_k}^{-1}$ on the trajectory $\Psi_\gamma
\nu_{s_k}\tau^{(k)}$ so that it becomes exactly $\tau^{(k+1)}$ and
we have
\begin{align*}
c(\tau^{(k+1)})&=c(\Psi_{\gamma_{s_k}} \tau)=c(\nu_{s_k}^{-1}
\Psi_\gamma \nu_{s_k} \tau^{(k)})= \\
&=\pi_{s_k}^{-1} \cdot (\pi_{s_k}
\cdot w^{(k)})'=\pi_{s_k}^{-1} \cdot \pi_{s_k} \cdot
(w^{(k)})'=w^{(k+1)}
\end{align*}
using again \ref{commutation} and then \ref{invariance} and hence
the thesis.
\end{proof}

\section{Hexagon vs square}

In this section we will try to investigate two aspects of
differences and similarities between hexagon and square.

In the first part we point out that the derivation in the Series
method for the square is slightly different from the one used for the
hexagon here and for the octagon in \cite{octagon} and we show how to
adapt the second one to the case of the square too.

In the second part we will show where is the hexagon located in
the space of flat tori and we will construct a dictionary between
the hexagon and its representative as a parallelogram.

\subsection{Derivations in the square}\label{second}

At a first reading \cite{square} and \cite{octagon} seem to have
three different definitions of the operation of derivations

\begin{enumerate}
\item In the case of the square a cutting sequence
consists in blocks of length $n_0$ or $n_0+1$ of the same letter
separated from one occurrence of the other one. Series's
derivation in \cite{square} consists of erasing $n_0$ letters from
each block before interchanging the roles of the two letters.
\item In \cite{octagon}, the derivation used in the case of the
square consists in erasing one letter from each block until
possible before interchanging the roles of the two letters.
\item The method used here for the hexagon (as well as in
\cite{octagon} and \cite{pentagon} for regular octagons and double
regular pentagons) is to keep the sandwiched letters and drop the
others.
\end{enumerate}

\subsubsection{Series derivation}

The difference between 1. and 2. can be easily explained. In fact,
the Series derivation is nothing but an acceleration of the one
described by Smillie and Ulcigrai. This difference is related to
the fact that the $n$-th branch of the Gauss map used by Series is
just $G_n=F_1 \circ F_0^{n-1}$ on the interval
$[\frac{1}{n+1},\frac{1}{n}]$, as we verified by induction.

\subsubsection{Where sandwich derivation fails}

On the other hand, the difference between the Series derivation
and the sandwich derivation remains a real difference. Consider a
square $Q$ with opposite sides glued obtaining the surface $S_Q$
and try to follow the same procedure as for the hexagon. Since we
have one more symmetry we can immediately reduce to the case of a
trajectory $\tau$ in direction $\theta \in [0,\frac{\pi}{2}]$ and
by applying $\nu_1$, reflection with respect to the bisectrix of
the first quadrant and the permutation $\pi_1=(AB)$ we can always
send a direction in $[0,\frac{\pi}{4}]$. The cylinder
decomposition of the square in a unique cylinder given by the
square itself gives us the Veech element used by Series:
\[
\sigma=
\begin{pmatrix}
1 & 1 \\
0 & 1
\end{pmatrix}.
\]

\begin{rk}
We notice that applying the shear $\sigma$ or its
orientation-reversing counterpart $\gamma=\begin{pmatrix}
-1 & 1 \\
0 & 1
\end{pmatrix}$ in this case does not change anything because the
reflection with respect to the vertical axis does not change
labels.
\end{rk}

Unfortunately, if we try to characterize cutting sequences as
before we need to show that cutting sequences are infinitely
derivable, but if we try to reproduce the proof of Proposition
\ref{central} we do not get to the same conclusion.

In the first step we construct the augmented sequences $\tilde
c(\tau) \in \{A,B,c\}^\Z$ by recording on the diagram $\tilde
{\mathscr D_0}$ the intersection with the diagonal $c$:

\[
\tilde {\mathscr D_0} \qquad \qquad  \xymatrix{ A \ar@/^1pc/[r] &
B \ar@/^1pc/[l]  \ar@(ur,dr)^c }
\]

In the second step we interchange the role of $B$ and $c$
obtaining $\hat c(\tau) \in \{A,c\}^\Z$ and the diagram

\[
\tilde {\mathscr D_0'} \qquad \qquad  \xymatrix{A \ar@(ul,dl)_B
\ar@/^1pc/[r]^B & c \ar@/^1pc/[l]^B  \ar@(ur,dr)^B }
\]

In the third step we apply the cut and paste map $\Upsilon_Q$ that
act sending the polygon deformed by the Veech shear back in the
original polygon, but we find out that the since $A'=A$ and $B'=c$
the cutting sequence $\overline c(\tau)$ of the trajectory with
respect to the sides $A', B'$ of the new square $Q'=\sigma Q$ is
exactly the previous one $\hat c(\tau)$. As we see in Figure
\ref{square}, this is because the new side coincide with the
diagonal instead of being inscribed in a parallelogram formed by
previous sides and diagonals, which determined uniquely which of
the new sides are crossed.

\begin{figure}[ht]
\centering
\includegraphics[width=0.5\textwidth]{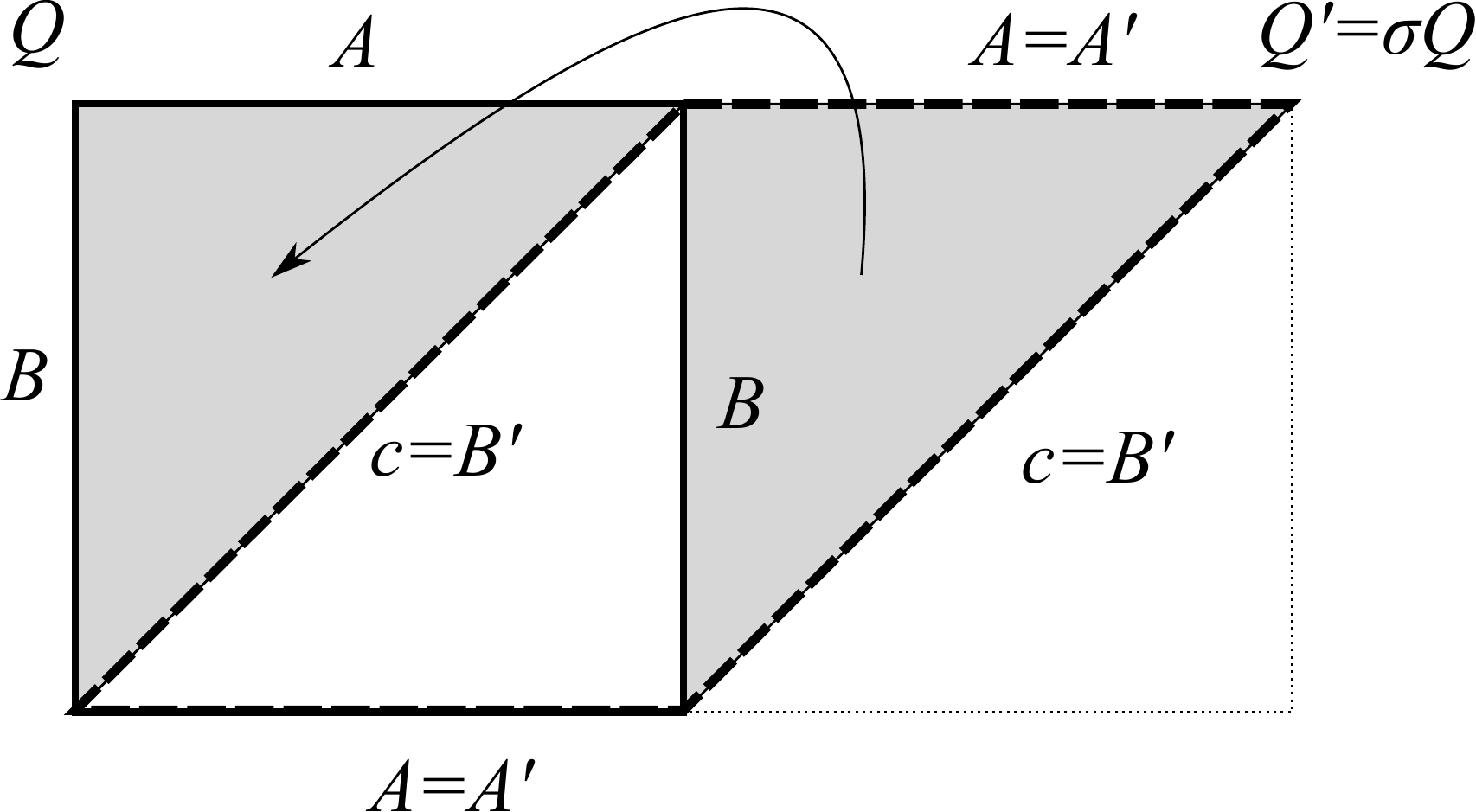}
\caption{Action of the Veech element and of the cut and paste map on the square} \label{square}
\end{figure}

\subsubsection{How sandwich derivation can work}

At this point we tried to understand if there is a way to adapt
the procedure used for the hexagon to make it work in the case of
the square too. It turns out that it is possible to make it work
just using a slightly different matrix.

In fact in the cases of $2n$-gons for $n \geq 3$ we always used as
Veech shears the matrices

\[
\begin{pmatrix}
1 & 2 \cot \left(\frac{\pi}{2n}\right) \\
0& 1
\end{pmatrix}
\]

which in this case is not the shear $\sigma$ given by the cylinder
decomposition but the composition

\[
\sigma'=\sigma^2=
\begin{pmatrix}
1 & 2\\
0& 1
\end{pmatrix}.
\]

Using this matrix instead of the previous one is the same than
considering the square rotated by $\frac{\pi}{4}$, obtaining a
diamond-shape and looking for an horizontal cylinder
decomposition. The decomposition showed in Figure \ref{diamond}
gives us a cylinder of modulus exactly 2.

\begin{figure}[h]
\centering
\includegraphics[width=0.5\textwidth]{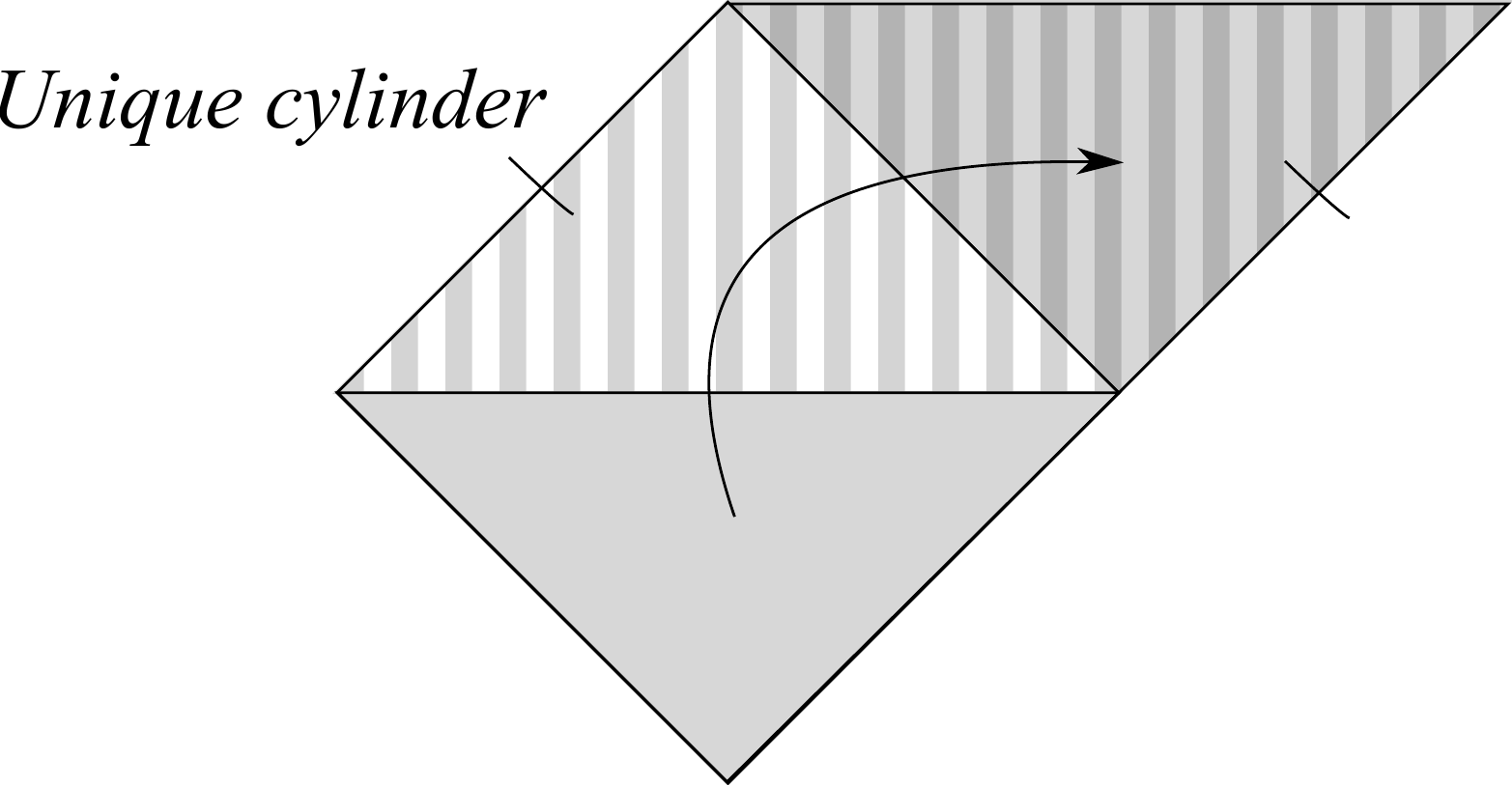}
\caption{Horizontal cylinder decomposition for the rotated square} \label{diamond}
\end{figure}

Using this matrix we can continue to reproduce the proof of Proposition \ref{central} and we have first and second step as
before.

For the third step, after applying the cut and paste $\Upsilon_Q'$
showed in Figure \ref{squareshear} the new side $B'$ is in this
case the diagonal of a parallelogram with sides $A$ and the
diagonal $c$ as we wanted.

\begin{figure}[ht]
\centering
\includegraphics[width=0.7\textwidth]{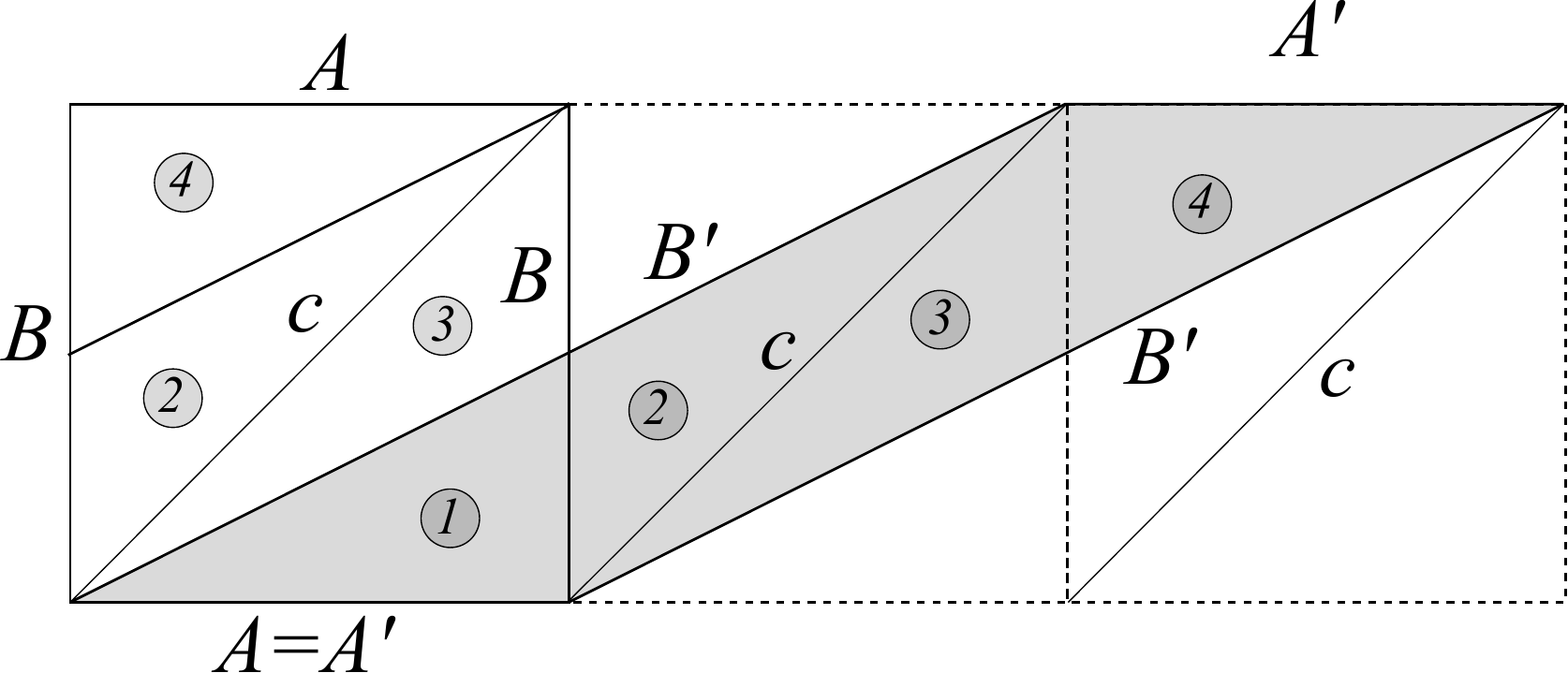}
\caption{Veech shear and cut and paste with the new matrix} \label{squareshear}
\end{figure}

This means that we can construct the diagram recording the
crossing of the sides of the new square as

\[
\mathscr D_0' \qquad \qquad  \xymatrix{A \ar@(ul,dl)_{B'}
\ar@/^1pc/[r] & c \ar@/^1pc/[l]  \ar@(ur,dr)^{B'} }
\]
\smallskip

and by looking at diagrams $\mathscr D_0 '$ and $\tilde {\mathscr
D_0'}$ as in the original proof we notice again that the
sandwiched letters are the one kept.

We can conclude that sandwich derivation gives us a different way
of characterizing the cutting sequences in a square. We already knew
that cutting sequences in the square are infinitely derivable in
the Series sense and we have now proved that cutting sequences in
the square are infinitely derivable also in the sense of sandwich
derivation.

Exactly as before, we can construct a Farey map different from the
previous one.

In the Teichm\"uller disk of the deformations of the square, using
the new matrix one can repeat exactly the same proofs as in the
case of the hexagon. The correspondent matrix
$\gamma'=\begin{pmatrix} -1 & 2
\\ 0 & 1 \end{pmatrix}$ is the hyperbolic reflection with respect
to the geodesic passing through 0 and 1 and hence we have the two
tessellations by hyperbolic triangles and by hyperbolic squares
exactly as before, by considering the two matrices

\begin{align*}
\alpha=\begin{pmatrix} 1 & 0 \\ 0 & -1 \end{pmatrix}, & &
\beta=\begin{pmatrix} 0 & 1 \\ 1 & 0 \end{pmatrix}
\end{align*}

which are, as in the case of the hexagon, the reflection with
respect to the horizontal axis and the matrix $\nu_1$.

Proposition \ref{fact3} can be also stated and proved in the
same way for the case of the square.

\subsection{Hexagon as a flat torus} \label{third}

\subsubsection{Space of flat tori}

The space of flat tori is the space of lattices in the plane. We
identify a lattice with its two-dimensional basis $\Z v_1+ \Z
v_2$. Conventionally, up to change $v_1$ and $v_2$ we can consider
$v_1$ to have smaller length. Up to rotating and scaling, we can
suppose $v_1$ to be coincident with the first vector of the
classic basis in the plane $e_1$ and the basis to be positively
oriented, so that $v_2$ is in the upper half plane that we
identify with $\H$. But we can restrict the domain even more,
because acting with an element of $SL(2,\Z)$ we obtain the same
lattice because we just change the length of a integer and hence
we can quotient the upper half plane by this group. This means the
following:

\begin{prop}
The space of flat tori, i.e. the space of lattices is
\[
\H /SL(2,\Z),
\]
where $SL(2,\Z)$ is considered acting on the right by M\"obius transformations.
\end{prop}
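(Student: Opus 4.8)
The plan is to produce a surjection from $\H$ onto the set of flat tori and then to recognise its fibres as the orbits of the right $SL(2,\Z)$-action by M\"obius transformations. Recall that a flat torus is $\R^2/\Lambda$ for a lattice $\Lambda = \Z v_1 + \Z v_2 \subset \R^2 \cong \C$, and that we regard two lattices as defining the same flat torus when they differ by a rotation composed with a scaling; so the space of flat tori is the set of such lattices modulo rotations, scalings, and the choice of $\Z$-basis.

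First I would normalise a representative. Given $\Lambda$, pick a $\Z$-basis $(v_1, v_2)$; swapping the two vectors if necessary we may take $|v_1| \le |v_2|$, and after a rotation composed with a scaling we may take $v_1 = e_1$. Replacing $v_2$ by $-v_2$ if needed we may assume that $(v_1, v_2)$ is positively oriented, which, since $v_1 = e_1$, means precisely that $v_2 = \tau$ for some $\tau \in \H$. Hence every flat torus is represented as $\Z + \Z\tau$ with $\tau \in \H$, i.e.\ the map $\tau \mapsto \Z + \Z\tau$ from $\H$ to the set of flat tori is onto.

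Next I would identify the fibres. Two points $\tau, \tau' \in \H$ give the same flat torus exactly when $(1, \tau)$ and $(1, \tau')$ are $\Z$-bases of lattices that agree after a rotation and a scaling, hence exactly when these two bases are related by a matrix in $GL(2,\Z)$; insisting that both normalised bases stay positively oriented forces this matrix into $SL(2,\Z)$, and carrying out the change of basis shows that $\begin{pmatrix} a & b \\ c & d \end{pmatrix} \in SL(2,\Z)$ sends $\tau$ to $\frac{a\tau + b}{c\tau + d}$ --- the M\"obius action --- which, with the convention fixed above that matrices act on bases on the right, is a right action. That this is well defined on $\H$ follows from $\Im\left( \frac{a\tau + b}{c\tau + d} \right) = \frac{\Im \tau}{|c\tau + d|^2} > 0$, which simultaneously shows $c\tau + d \neq 0$ and that $\H$ is preserved. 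Combining the surjectivity of the previous paragraph with this description of the fibres, the map descends to a bijection between $\H/SL(2,\Z)$ and the space of flat tori, which is the claim.

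The only subtle point --- and the main, rather mild, obstacle --- is the bookkeeping in the normalisation step: one must verify that after rotating and scaling to get $v_1 = e_1$ the remaining ambiguities (which of $v_1, v_2$ is kept, the sign of $v_2$, and on the boundary cases $|v_1| = |v_2|$ or $|\tau| = 1$ the resulting non-uniqueness) are exactly the ones already absorbed into the $SL(2,\Z)$-action, so that no further identifications are introduced. The rest is the routine computation of the change-of-basis formula and a check of the orientation convention.
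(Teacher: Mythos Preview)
Your argument is correct and follows exactly the approach the paper takes: the paper does not give a separate proof but sketches precisely this normalisation (swap so $|v_1|\le|v_2|$, rotate and scale to $v_1=e_1$, orient so $v_2\in\H$) in the paragraph immediately preceding the proposition, and then quotients by the residual change-of-basis $SL(2,\Z)$-action. You have simply fleshed out that sketch with the explicit M\"obius formula and the fibre identification, so nothing further is needed.
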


\begin{figure}[h]
\centering
\includegraphics[width=0.5\textwidth]{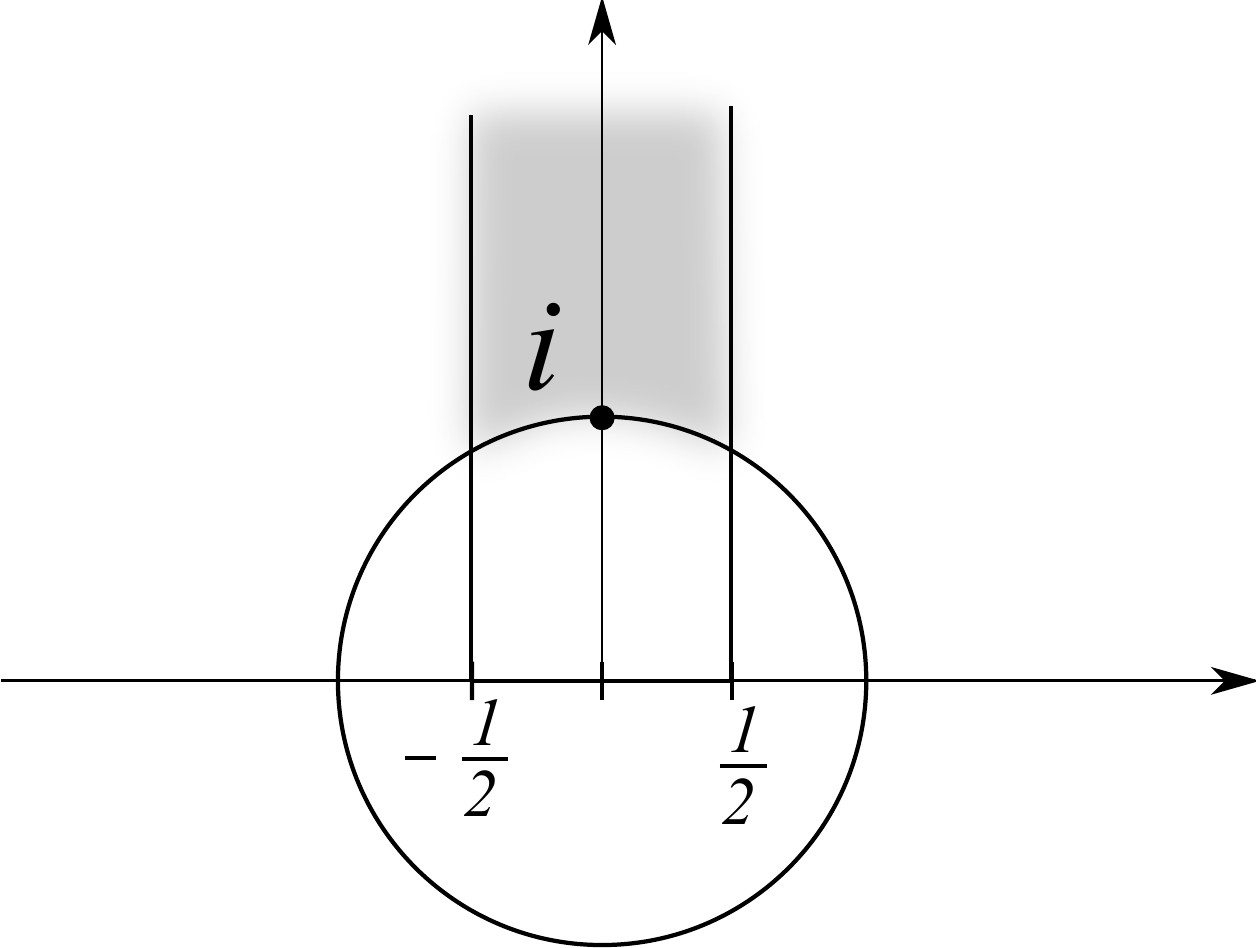}
\caption{The space of lattices} \label{flattori}
\end{figure}

It is well known that a fundamental domain for the $SL(2,\Z)$
action is the infinite region limited by the vertical lines through
$\frac{1}{2}$ and $-\frac{1}{2}$ and the upper semicircle of the
unit circle.

It is also known that the hexagon is represented by the
intersection point of the semicircle with the imaginary axis.

Naturally, this implies that we cannot find a cut and paste from
the hexagon to the square, even if they both represent flat tori.
This means also that with a cut and paste map we can send the
hexagon in a parallelogram with the bottom left angle of
$\frac{\pi}{3}$. In fact the modulus of such a parallelogram seen
as a cylinder is the same of the modulus of the unique cylinder of
the horizontal cylinder decomposition of the hexagon and it is
clear that the modulus of the parallelogram determines its
position in the space of flat tori.

\subsubsection{Dictionary}

Let us now add to the figure of an hexagon the diagonals $e$ and
$f$ starting from the bottom left angle and such that the first
one is vertical and the second one is in direction $\frac{\pi}{6}$
as in Figure \ref{hexparall}. We can cut the hexagon along these
diagonals and paste the two smaller pieces on the sides labelled
$B$ and $A$ of the hexagon, so that the two sides labelled $C$ are
glued together.

\begin{figure}[h]
\centering
\includegraphics[width=0.7\textwidth]{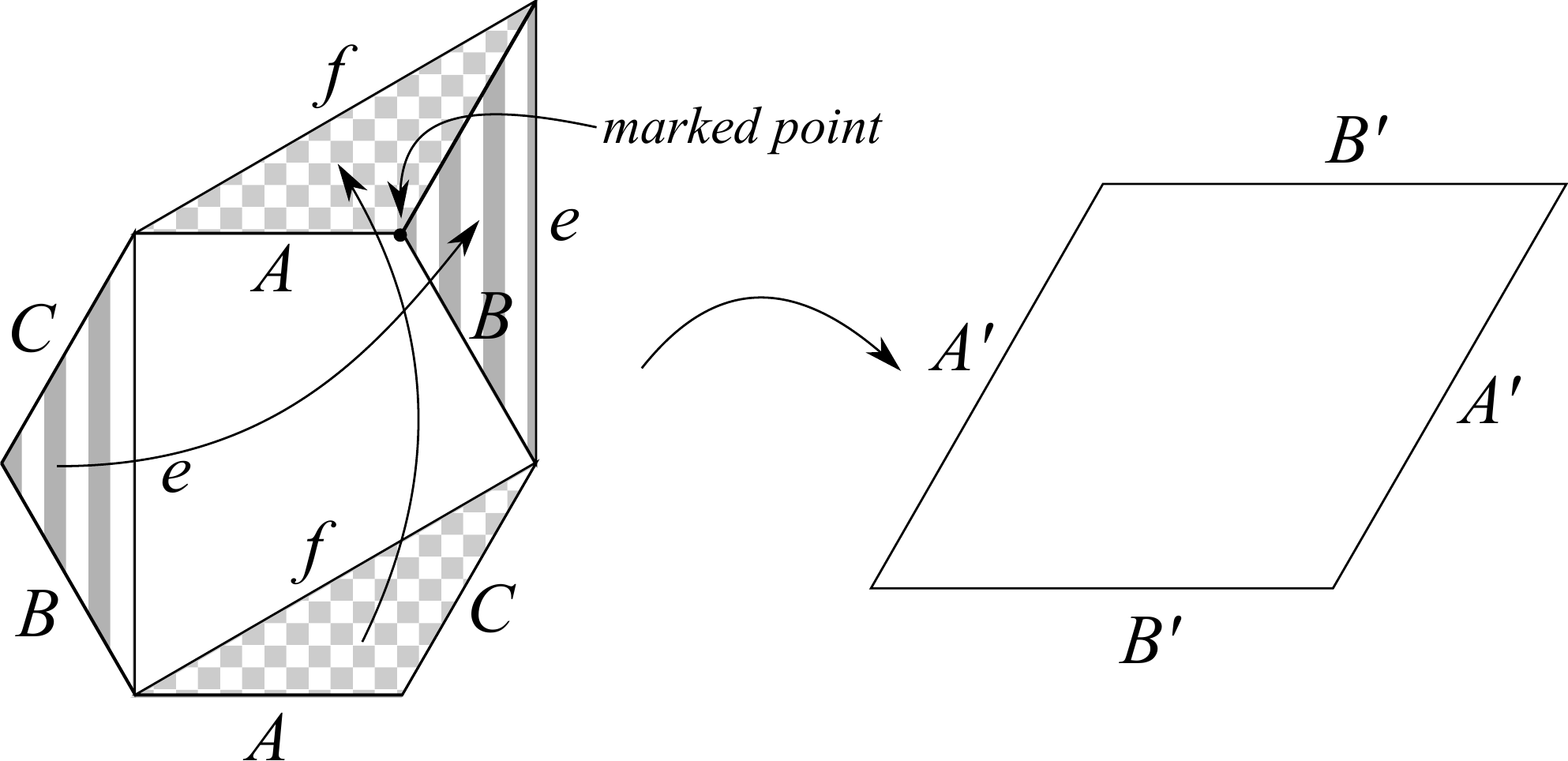}
\caption{Cut and paste from the hexagon to the parallelogram} \label{hexparall}
\end{figure}

The cut and paste map we just described is showed in Figure
\ref{hexparall}. Up to rotating the figure we have obtained the
requested parallelogram, equivalent to the hexagon and
representing it in the fundamental domain of $SL(2,\Z)$, which is
the space of flat tori, as we explained before.

Let us consider a trajectory in a direction belonging to
$\Sigma_0$ in the hexagon. It naturally is also a trajectory in
such a parallelogram $P$. Suppose we have the cutting sequence of
such a trajectory with respect to the sides of $E$. In order to
find out the cutting sequence with respect to the sides of $P$ we
first record the passage for the diagonals $e$ and $f$ in the
transition diagram $\mathscr D_0$.

The diagram becomes hence:

\[
\mathscr D_0 \qquad \qquad  \xymatrix{ A \ar@/^1pc/[r] & C
\ar@/^1pc/[l]^e \ar@/^1pc/[r]^e
 & B \ar@/^1pc/[l]^{ef} \ar@(ur,dr)^e }
\]

This allows us to construct an \emph{augmented sequence} recording
both the passages for the sides of the hexagon and of the
parallelogram. To obtain the cutting sequence with respect to $P$
we just need to eliminate the record of the crossing of $A$, $B$
and $C$ by dropping those letters from the cutting sequence and
then substitute $e$ with $A'$ and $f$ with $B'$.

The new sequence in $\{A',B'\}^\Z$ is the cutting sequence of the
same trajectory with respect to the sides of the parallelogram
$P$.

\begin{rk}
We can consider just the case of a trajectory in direction in
$\Sigma_0$ because if the trajectory is in a direction in
$\Sigma_i$ we can apply $\nu_i$ and repeat the procedure. The
unique difference is that we have to consider different diagonals
$e$ and $f$ such that after applying $\nu_i$ they are sent in the
diagonals considered in the case of trajectories in $\Sigma_0$.
\end{rk}

\subsubsection{Relations}

Unfortunately, no matter which derivation procedure do we use, the
derivation and the dictionary do not commute. A simple
counter-example is the trajectory in horizontal direction passing
for the midpoint of the sides $B$ and $C$. Its cutting sequence is
the periodic word obtained repeating the sub-word $BC$. Its
sandwich-derived sequence is the same word and by applying the
dictionary the same trajectory has cutting sequence with respect
to $P$ given by the periodic word obtained repeating $A'B'B'$
whose derivative (both the Series one and the sandwich one) is not
the same sequence.

Another explanation for this is because although the hexagon and
the parallelogram are the same flat tori, by applying the cut and
paste map we are loosing the information of the second marked
point that we have in $S_E$, since after the cut and paste it
becomes a point in the interior of the parallelogram, which gives
a torus with just one marked point corresponding to the four
vertices.

Moreover, if we consider the tessellation of the Teichm\"uller
disk in the case of the hexagon and the one of the square (either
the construction given by Series in \cite{squarehyp} or the one
presented in the previous section) the two seem not to be in any
way comparable. In fact, one sees the disk divided in six sectors
and the second in four sectors, crossing the boundary in different
points.

\section{Cutting sequences for a Bouw-M\"oller surface}\label{fourth}

The last case we will try to analyse in this work is the one of a
Bouw-M\"oller surface.

They were first discovered and described in an algebraic way by
Irene Bouw and Martin M\"oller in \cite{BM} as the surfaces having
Veech group isomorphic to the triangular group
$\Delta(m,n,\infty)$. Hence it is still a Veech surface as the
previous cases of squares, regular $2n$-gons and regular double
odd-gons, which has also triangular groups as Veech groups.

Later, Pat Hooper in \cite{Hooper} gave a flat description of
those surfaces as surfaces obtained by gluing together a finite
number of semi-regular polygons.

Some new results about cutting sequences in Bouw-M\"oller surfaces
are obtained by Diana Davis, who is working on it, but her
approach is deeply different from ours, since she does not use at
all the second representation of Bouw-M\"oller surfaces described
by Hooper, which has a key role in our results. In fact, it
appears in our main result presented in this work, which is
Theorem \ref{main} and state that the derived sequence of a
cutting sequence of a trajectory in the Bouw-M\"oller surface
$\mathscr M(3,4)$ is still a cutting sequence.

\subsection{Two representations of the surface}

As described in \cite{Hooper} to each Bouw-M\"oller surface we can
associate a grid graph and viceversa. Since we have two different
ways of obtaining a decomposition in semi-regular polygons from a
graph, this means that each graph has two
different representations in semi-regular polygons, considering $\mathscr G(m,n)$ or $\mathscr G(n,m)$. Since we always
obtain two-dimensional graphs, each Bouw-M\"oller surface is
identified by two parameters $m$ and $n$ which will be the number
of rows plus one and the number of columns plus one. The corresponding surface
surface will be denoted $\mathscr M(m,n)$. Here we will call $\mathscr M(m,n)$ and $\mathscr M(n,m)$ the two representations of $\mathscr M(m,n)$, because associated to the same graph and affinely equivalent.

Let us start from the representation of the surface $S$ as a
regular octagon with two squares glued to it as in Figure
\ref{BMoctagon}, where all sides have unit length.

\begin{figure}[h]
\centering
\includegraphics[width=1\textwidth]{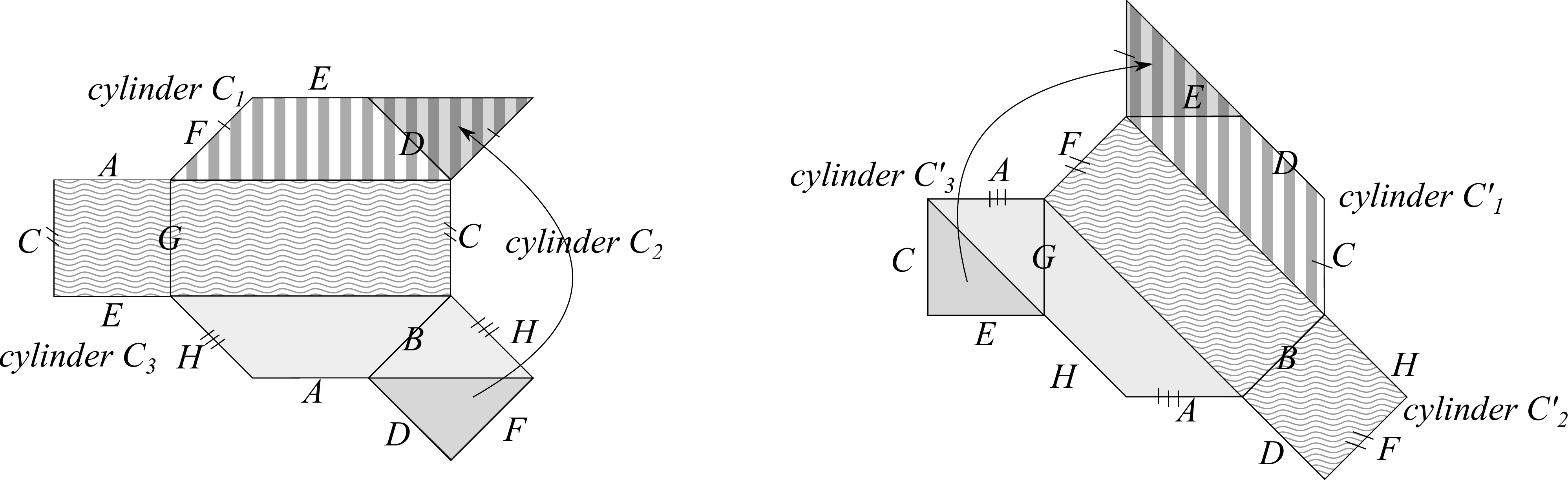}
\caption{First representation of $\mathscr M(3,4)$ and cylinder
decompositions.}
\label{BMoctagon}
\end{figure}

To construct the related grid graph we consider the cylinder
decompositions in Figure \ref{BMoctagon}, the horizontal one and
the one in direction $\frac{3 \pi}{4}$. The set of vertices will
be a bipartite one,  $V=V_b \cup V_w$ and we will denote $V_b$ and
$V_w$ in figures by black dots and white dots. Each vertex
represents a cylinder and vertices in $V_b$ are in 1 to 1
correspondence with horizontal cylinders, while black dots are in
1 to 1 correspondence with cylinders in direction $\frac{3
\pi}{4}$.

An edge between two vertices $v_i$ and $v'_j$ represents the basic
parallelogram which is the intersection of the two corresponding
cylinders $C_i \cap C'_j$.

On each vertex we establish an order on the edges coming out of
that vertex by adding circular arrows turning clockwise on the odd
columns of the graph and counter-clockwise on the even ones and
register which basic parallelograms are glued together.

\begin{figure}[ht]
\centering
\includegraphics[width=0.2\textwidth]{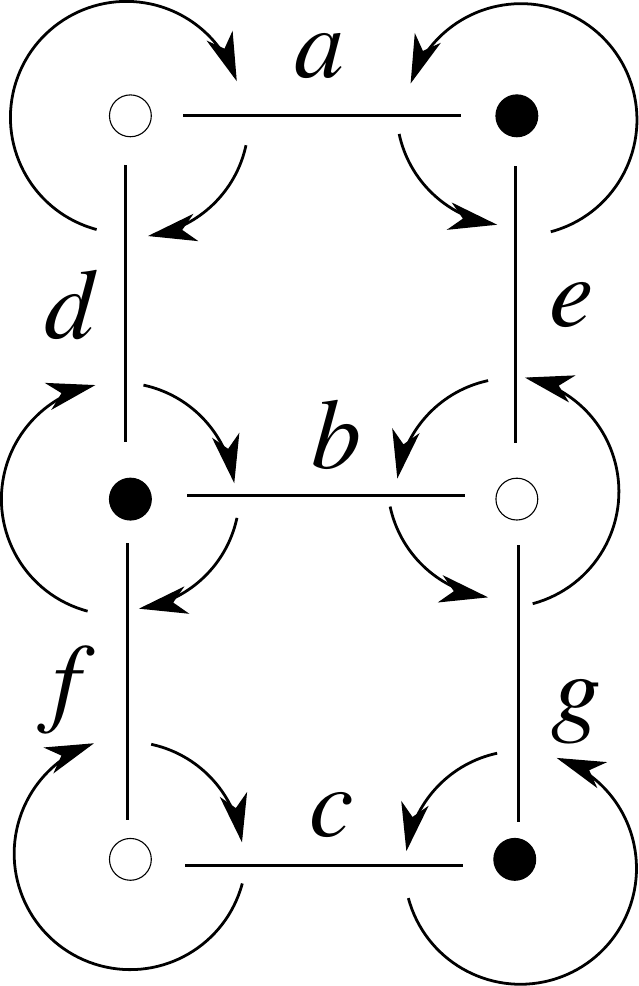}
\caption{Grid graph of $\mathscr M(3,4)$.}
\label{graph}
\end{figure}

In this way we can construct the grid graph that turns out to be
as in Figure \ref{graph}.

As the number or rows and columns are two and three this explains
why we call such a surface $\mathscr M(3,4)$.

Conversely, we can find out the two representations by "decomposing" the diagram in smaller pieces, each of
which corresponds to one polygon (regular or semi-regular) of the
representation.

By decomposing it in vertical, in fact, we obtain the three pieces:

\begin{align*}
\xymatrix{ \circ \ar@{-}[d]^d \\
\bullet \ar@{-}[d]^f \\
\circ} & &  \xymatrix{ \circ \ar@{-}[d]_d \ar@{-}[r]^a & \bullet
\ar@{-}[d]^e \\
\bullet \ar@{-}[d]_f \ar@{-}[r]^b & \circ \ar@{-}[d]^g \\
\circ \ar@{-}[r]_c &\bullet } & & \xymatrix{ \bullet \ar@{-}[d]^e \\
\circ \ar@{-}[d]^g \\
\bullet}
\end{align*}

Now each edge, denoted by a letter, corresponds to a basic
parallelogram (of side length still to be determined) in our
decomposition but in general we do not know the directions of the
cylinder decompositions used and hence the slope of the
parallelograms. Up to shear the representation obtained, we hence
now consider them as basic rectangles. The ones repeated twice
correspond each time to half a rectangle (i.e. a triangle) in
each piece of the decomposition.

The arrows between edges that we have on the graph show us how the
basic rectangles are glued. Since vertices in $V_w$ represent
horizontal cylinders, an arrow rotating around a vertex in $V_w$
between two edges means that the right side of the basic rectangle
corresponding to one edge is glued to the left side of the basic
rectangle corresponding to the other edge. Conversely, since
vertices in $V_b$ represent vertical cylinders, arrows around
vertices in $V_b$ correspond to basic rectangles glued through top
and bottom sides.

For example let us compare the second piece of the graph and the
second polygon in Figure \ref{octort} which shows the first
representation. In the graph, there is an arrow connecting edges
$a$ and $d$ around a white vertex and hence $a$ and $d$ are glued
on the right and left side respectively. The arrow between edges
$d$ and $b$ is around a black vertex and hence they are glued
through the top and bottom sides. Moreover, the basic rectangle
corresponding to $b$ is glued to $d$ and $f$ by the top and bottom
sides since the arrows connecting the edge $b$ and the edges $d$
and $f$ are around a black vertex. And it is glued to $e$ and $g$
by the right and left sides because the arrows connecting them are
around a white vertex.

This allows us to give the first decomposition $R_1^\perp$ (see
Figure \ref{octort}) that turns out to be exactly as before, two
quadrilaterals and a octagon, but sheared so that the two cylinder
decomposition described before are orthogonal. To have back the
regular decomposition we have to apply the shear sending the
orthogonal lines back in direction $\frac{3 \pi}{4}$. We will call
such sheared representation $R_1$ and the one with orthogonal
cylinder decompositions $R_1^\perp$.

\begin{figure}[h]
\centering
\includegraphics[width=0.9\textwidth]{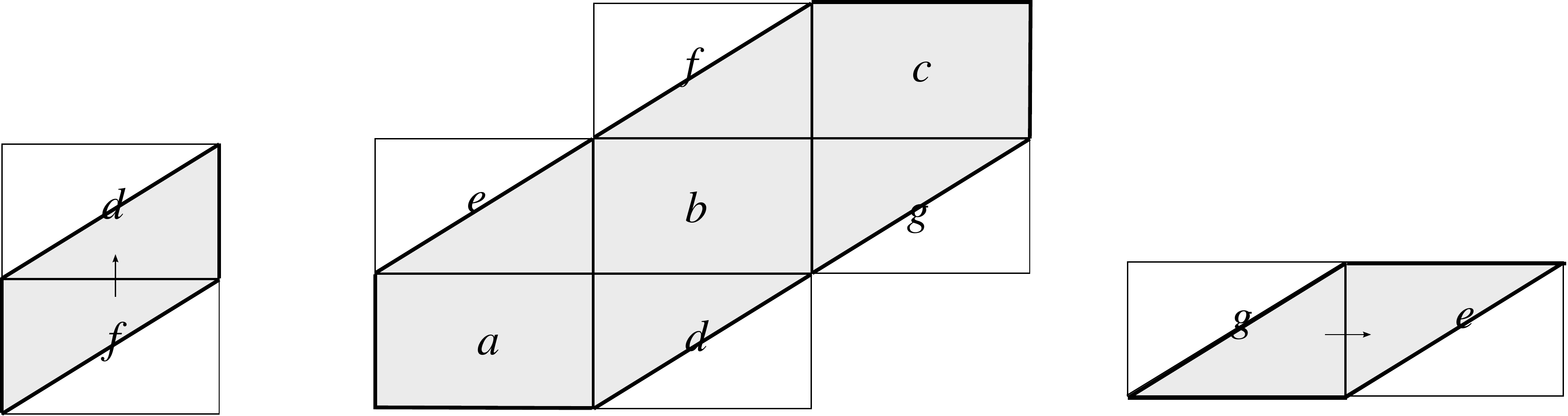}
\caption{The representation $R_1^\perp$.} \label{octort}
\end{figure}

By decomposing the grid graph in horizontal, we obtain the second
representation. We get the decomposition

\begin{align*}
\xymatrix{ \circ \ar@{-}[r]^a & \bullet} & & \xymatrix{ \circ
\ar@{-}[d]_d \ar@{-}[r]^a & \bullet \ar@{-}[d]^e \\
\bullet \ar@{-}[r]_b & \circ} & & \xymatrix{\bullet
\ar@{-}[d]_f \ar@{-}[r]^b & \circ \ar@{-}[d]^g \\
\circ \ar@{-}[r]_c & \bullet} & & \xymatrix{ \circ \ar@{-}[r]^c &
\bullet}
\end{align*}

and repeating the same procedure described above we have the
decomposition in Figure \ref{hexort} that we will denote
$R_2^\perp$. It can be sheared back in two triangles and two
semi-regular hexagons. We will call the second representation
$R_2$ and is represented in Figure \ref{BMhexagon} together with
the two cylinder decompositions which becomes orthogonal in the
representation $R_2^\perp$.

\begin{figure}[h]
\centering
\includegraphics[width=0.9\textwidth]{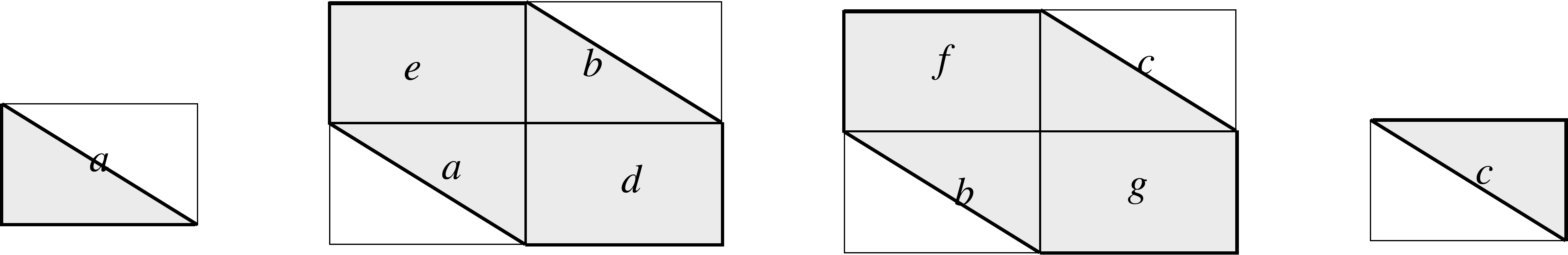}
\caption{The representation of $R_2^\perp$.} \label{hexort}
\end{figure}

In \cite{Hooper}, Hooper describes how to determine the lengths of
the sides of the basic rectangles and hence the sides of the
polygons from the graph. The first decomposition is made of
regular polygons, a kind of particular case of semi-regular
polygons where one of the sizes of sides is zero. So all sides has
same length that we fix to be 1. In the case of the hexagon, we
find out that the two triangles are equilateral and with sides of
the same length of the short sides of the hexagons and
semi-regular hexagons has sides of length 1 and $\frac{\sqrt
2}{2}$. Naturally, this is not necessarily the representation
corresponding to the same surface of $R_1$, but keeping the ration
of the two sides we can proportionally change the sizes of the
sides to have the same surface. To find the new sizes we impose
that the total area of the polygons appearing in $R_2$ must be the
same of the total area of the polygons in $R_1$ provided it has
all sides of length 1. A simple calculus shows that the area of
the polygons in the first representation is
\[
\mathcal A_1=2(2+ \sqrt 2)
\]
and denoting by $a$ and $a'=\frac{\sqrt 2}{2}a$ the two side length
in semi-regular hexagons the area of the second representation is
\[
\mathcal A_2=\sqrt 3(1+\sqrt 2)a.
\]

We hence impose that $\mathcal A_1=\mathcal A_2$ and obtain that
the length of the sides in the semi-regular hexagon must be
\[
a=\sqrt{\frac{2\sqrt 6}{3}} \qquad \text{and} \qquad
a'=\frac{\sqrt 2}{2}a= \frac{\sqrt 2}{2}\sqrt{\frac{2\sqrt 6}{3}}.
\]

\begin{figure}[h]
\centering
\includegraphics[width=1\textwidth]{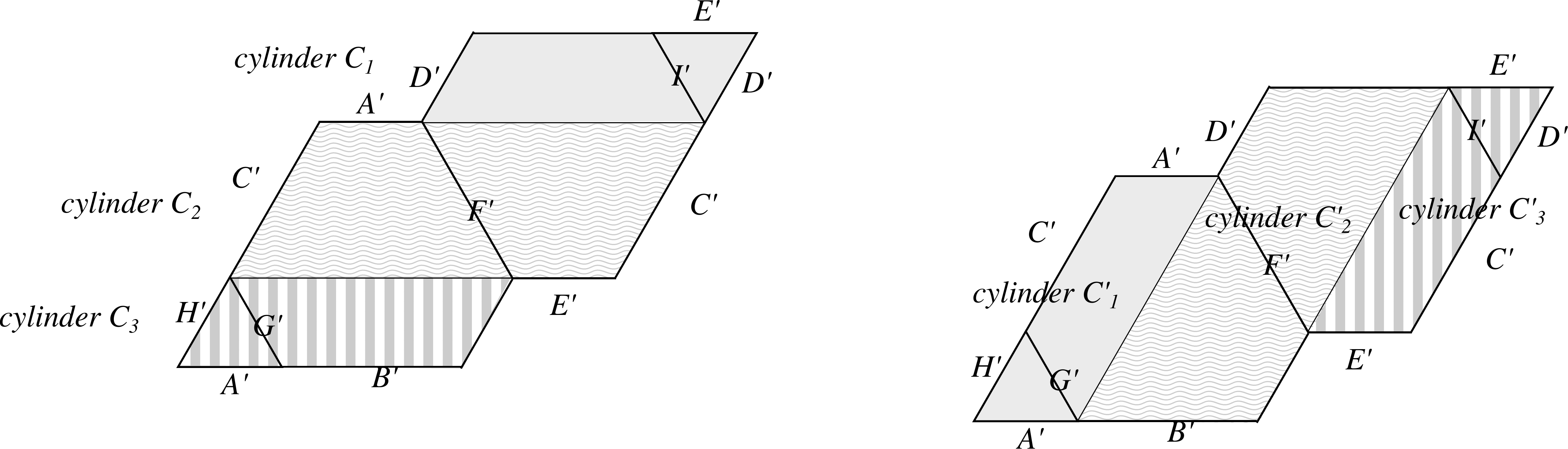}
\caption{Second representation of $\mathscr M(3,4)$.}
\label{BMhexagon}
\end{figure}

\subsection{Transition diagrams}

Let us now consider a linear trajectory in the surface $R_1$. Again, we can suppose that the
trajectory does not hit any vertex and code such trajectory by
recording the sides crossed by it considering the labelling given
in Figure \ref{BMoctagon}, obtaining a bi-infinite word in a new
alphabet $\mathscr A =\{A,B,C,D,E,F,G,H\}$.

Again, we try to study the cutting sequence of the trajectory.
Naturally, being each side of the squares glued to one in the
octagon, it does not matters where we collocate the two squares
with respect to the octagon as long as we preserve the glueing.

Since in this case the rotation of angle $\pi$ does not preserve
the labels, we have to consider trajectories in all directions in
$[0,2\pi]$. Sectors of total angle $\frac{\pi}{8}$ are the most
natural for having the transitions in the cutting sequences
determined and hence we divide the directions in 16 sectors of
size $\frac{\pi}{8}$. For reasons that will become clear later on,
we will call $\Sigma_0$ the sector $[\frac{7 \pi}{8},\pi]$ and
enumerate the others clockwise so that $\Sigma_i=[\pi -\frac{(i+1)
\pi}{8},\pi -\frac{i \pi}{8}]$ for $i=0,\dots,15$. We have
transformations of $R_1$ sending each $\Sigma_i$ back in
$\Sigma_0$ and preserving the figure and they are given by:
\begin{align*}
\nu_0&=\id, & \nu_1&=\rho_{-\frac{\pi}{8}} \circ r_1 \circ
\rho_{\frac{\pi}{8}}, & \nu_2&=\rho_{\frac{\pi}{4}}, &
\nu_3&=\rho_{-\frac{3 \pi}{8}} \circ
r_1 \circ\rho_{\frac{3 \pi}{8}}, \\
\nu_4&=\rho_{\frac{\pi}{2}}, & \nu_5&=\rho_{\frac{\pi}{8}} \circ
r_2 \circ \rho_{-\frac{\pi}{8}}, & \nu_6&=\rho_{\frac{3 \pi}{4}},
& \nu_7&=r_2, \\
\nu_8&=\nu_7 \circ r_1, & \nu_9&=\nu_6 \circ r_1, &
\nu_{10}&=\nu_5 \circ r_1, & \nu_{11}&=\nu_4 \circ r_1, \\
\nu_{12}&= \nu_3 \circ r_1, & \nu_{13}&= \nu_2 \circ r_1, &
\nu_{14}&= \nu_1 \circ r_1, & \nu_{15}&= r_1.
\end{align*}

We recall that $\rho_\theta$ is the counter-clockwise rotation of
angle $\theta$ and $r_1$ and $r_2$ are respectively the reflection
with respect to the horizontal and vertical axis.

We computed the corresponding permutations and hence proved the
following:

\begin{lemma}
The permutations on the labels of $R_1$ corresponding by the
action of the matrices above are
\begin{align*}
\pi_0&=\id, & \pi_1&=(AD)(BC)(EH)(FG), \\
\pi_2&= (ABCDEFGH), & \pi_3&=(AC)(DH)(EG), \\
\pi_4&=(ACEG)(BDFH), & \pi_5&=(AB)(CH)(DG)(EF), \\
\pi_6&=(ADGBEHCF), & \pi_7&= (BH)(CG)(DF), \\
\pi_8&=(AE)(BF)(CG)(DH), & \pi_9&=(AH)(BG)(CF)(DE), \\
\pi_{10}&=(AFCHEBGD), & \pi_{11}&=(AG)(BF)(CE), \\
\pi_{12}&=(AGEC)(BHFD), & \pi_{13}&=(AF)(BE)(CD)(GH), \\
\pi_{14}&=(AHGFEDCB), & \pi_{15}&=(AE)(BD)(FH).
\end{align*}
\end{lemma}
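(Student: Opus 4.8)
The statement is a finite computation, and the plan is to organise it around the group structure of the sixteen symmetries rather than to treat each $\nu_i$ separately.

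First, observe that each $\nu_i$ is, by construction, an isometry of the plane carrying the polygonal decomposition $R_1$ of Figure \ref{BMoctagon} to a congruent one; following it by the translation equivalence (cut and paste) that restores the standard picture of $R_1$ produces an isometric affine automorphism $\Psi_{\nu_i}$ of $\mathscr M(3,4)$, exactly as each element of $D_6$ gave an affine automorphism of $S_E$ in Section \ref{first}. Such an automorphism sends glued pairs of sides to glued pairs of sides, hence permutes the alphabet $\{A,\dots,H\}$ by some $\pi_i$; and since the cut-and-paste step is label preserving, the assignment $\nu_i\mapsto\pi_i$ respects composition, so it is a group homomorphism from the dihedral group $D_8=\langle \rho_{\pi/4},r_1\rangle$ of symmetries of the octagon, which contains all sixteen $\nu_i$, to the symmetric group on $\{A,\dots,H\}$. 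It is therefore enough to compute $\pi_i$ on a generating set and propagate.

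I would take the generators $\nu_2=\rho_{\pi/4}$ and $\nu_{15}=r_1$. Reading off the labels in Figure \ref{BMoctagon}: the rotation by $\pi/4$ advances each glued pair of sides to the next one in the cyclic order, so $\pi_2=(ABCDEFGH)$; and the horizontal reflection fixes the two octagon sides met by the horizontal axis, namely $C$ and $G$, and interchanges the mirror pairs $A\leftrightarrow E$, $B\leftrightarrow D$, $F\leftrightarrow H$, so $\pi_{15}=(AE)(BD)(FH)$. These two computations are the only ones that really use the picture. Every other $\nu_i$ is a word in $\nu_2$ and $\nu_{15}$: the even-indexed ones are the powers $\nu_{2k}=\nu_2^k$, and each odd-indexed $\nu_i$ is a reflection of the octagon, hence of the form $\nu_2^{k(i)}\nu_{15}$ for the exponent determined by its axis (e.g. $\nu_1=\nu_2^{-1}\nu_{15}$ and $\nu_7=r_2=\nu_2^{4}\nu_{15}$, while the relations $\nu_{8+k}=\nu_{7-k}\nu_{15}$ of the statement dispose of the rest). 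Applying the homomorphism, each $\pi_i$ comes out as a single permutation product; one checks, for instance, $\pi_2^{-1}\pi_{15}=(AD)(BC)(EH)(FG)=\pi_1$ and $\pi_2^{4}\pi_{15}=(BH)(CG)(DF)=\pi_7$, in agreement with the table. Since all sixteen entries are generated from two of them, the list is self-checking.

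The only delicate point — and the place I would be most careful — is the claim that $\nu_i\mapsto\pi_i$ is well defined and multiplicative on the full group: as a planar isometry, $\nu_i$ does not literally fix the octagon-with-two-squares figure (a rotation by $\pi/4$ displaces the squares), so one must invoke Hooper's flat description in \cite{Hooper} to know that $D_8$ really does sit inside $\aff(\mathscr M(3,4))$, i.e. that the two squares are placed symmetrically enough inside the octagon for the cut and paste returning them to standard position to be compatible with the identifications. Granting this, and granting the correct reading of the labels of Figure \ref{BMoctagon}, the lemma reduces to the bookkeeping above.
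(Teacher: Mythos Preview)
Your approach is correct and considerably more organised than what the paper does. The paper's ``proof'' is the single sentence \emph{``We computed the corresponding permutations and hence proved the following''} preceding the statement --- a direct case-by-case check with no details recorded. You instead exploit the fact that $\nu_i\mapsto\pi_i$ is a homomorphism from the dihedral group of the octagon into the symmetric group on $\{A,\dots,H\}$, compute it on the two generators $\nu_2=\rho_{\pi/4}$ and $\nu_{15}=r_1$, and obtain the remaining fourteen entries by multiplication. This buys you an internal consistency check for free (any error in a single entry would propagate and be detected), at the cost of having to argue once that the homomorphism is well defined.

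On that last point, your caution is well placed but slightly overcautious: the paper already asserts, just before defining the $\nu_i$, that these are ``transformations of $R_1$ \dots preserving the figure'', so the existence of the corresponding affine automorphisms of $\mathscr M(3,4)$ --- and hence of the label permutations --- is taken as part of the setup rather than something you need to extract from \cite{Hooper}. With that granted, your reduction to generators is both valid and the cleaner way to present the computation.
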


We now investigate which transitions may occur in a cutting
sequence by writing down the transition diagram corresponding to a
trajectory in $\Sigma_0$. The transition diagrams for trajectories
in other directions would follow directly from this one by
applying the corresponding permutations. The first transition
diagram is:

\[
\mathscr D_0 \qquad \qquad  \xymatrix{ \quad H \quad \ar@/^1pc/[r]
& \quad B \quad \ar@/^1pc/[l] \ar@/^1pc/[d]
& \quad F \quad \ar@/^1pc/[l] \ar@/^1pc/[r] & \quad D \quad \ar@/^1pc/[l] \ar@/^1pc/[d] \\
\quad A \quad \ar@/^1pc/[u] & \quad G \quad \ar@/^1pc/[l]
\ar@/^1pc/[r] &  \quad C \quad \ar@/^1pc/[l] \ar@/^1pc/[u] & \quad
E \quad \ar@/^1pc/[l]}
\]

\bigskip
We note that a cutting sequence in direction in $\Sigma_i$
is a bi-infinite path in the corresponding transition diagram
$\mathscr D_i$.

\begin{rk}
Given a trajectory $\tau$ in direction in $\Sigma_i$ its normal
form is the trajectory in direction in $\Sigma_0$
\[
n(\tau)=\nu_k \tau.
\]

Moreover, cutting sequences verify
\[
c(n(\tau))=\pi_k \cdot c(\tau).
\]
\end{rk}

\subsection{Derivation}

In this section we will prove a statement equivalent to Proposition \ref{central} for the hexagon. We need hence to define
some kind of combinatorial operation on words (the derivation) so
that the derived sequence of a cutting sequence will be again a
cutting sequence of an other trajectory. Here we
explain the role of the second representation: after deriving, we
will effectively have the cutting sequence of another trajectory, but coded with respect to the sides of the
second representation. This means that the derivation will be
nothing else but the rule of how to pass from a cutting sequence
in $R_1$ and the cutting sequence in $R_2$ after applying the
matrix that sends $R_1$ to $R_2$. To conclude the procedure and get a cutting sequence of a new trajectory in the same surface, we have to do the same procedure on $R_2$.

As before, we have:

\begin{defin}
A word in the alphabet $w \in \alfab$ is admissible if it
represents a bi-infinite path in one of the diagrams $\mathscr
D_i$.
\end{defin}

and by construction

\begin{lemma}
If $w \in \alfab$ is a cutting sequence, it is admissible in one
of the diagrams $\mathscr D_i$.
\end{lemma}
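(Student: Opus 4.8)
The plan is to argue exactly as for Lemma \ref{csadmissible} in the hexagon case: the diagrams $\mathscr D_i$ were constructed precisely to record every transition that can occur in the cutting sequence of a trajectory whose direction lies in $\Sigma_i$, so the statement amounts to checking that this construction is both \emph{complete} (every geometrically possible transition appears as an arrow) and \emph{sound} (no spurious arrows). First I would reduce to the base sector $\Sigma_0$. Given a cutting sequence $w=c(\tau)$, the direction $\theta$ of $\tau$ lies in some $\Sigma_i$ with $i\in\{0,\dots,15\}$, since these sixteen sectors of size $\frac{\pi}{8}$ cover $[0,2\pi]$. Replacing $\tau$ by its normal form $n(\tau)=\nu_i\tau$, whose direction lies in $\Sigma_0$, and using the relation $c(n(\tau))=\pi_i\cdot c(\tau)$ together with the fact that $\nu_i$ carries the labelled figure $R_1$ to itself, it is enough to show that $c(\tau)$ traces a bi-infinite path in $\mathscr D_0$ whenever $\theta\in\Sigma_0=[\tfrac{7\pi}{8},\pi]$; applying the inverse permutation $\pi_i^{-1}$ then transports this path to a path in $\mathscr D_i$, since relabelling commutes with the transition structure (cf.\ Remark \ref{commutation}).

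The core of the argument is a finite geometric case check on the representation $R_1$ (the regular octagon with the two squares attached, labelled as in Figure \ref{BMoctagon}). For each side $L$ of the polygons and each direction $\theta\in\Sigma_0$, I would determine the set of sides through which a segment entering the interior through $L$ in direction $\theta$ can next leave, and verify that this set is precisely the set of out-neighbours of $L$ in $\mathscr D_0$. Because the cutting sequence only records which sides are crossed, and because the surface is obtained by gluing the octagon's sides to the squares' sides along a fixed identification, the exact placement of the two squares relative to the octagon is irrelevant here: the transition data depends only on the gluings, which are fixed. Concatenating these local checks along a trajectory that never meets a vertex shows that every consecutive pair of letters appearing in $c(\tau)$ is an edge of $\mathscr D_0$.

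Finally, since $\tau$ is bi-infinite and avoids the vertices, it crosses infinitely many sides as time runs to $\pm\infty$, so $c(\tau)$ is a genuine bi-infinite word in $\alfab$; together with the previous paragraph this makes it a bi-infinite path in $\mathscr D_0$, hence $w$ is admissible in $\mathscr D_0$, and in general in $\mathscr D_i$ for the sector $\Sigma_i$ containing the direction of $\tau$. I expect the only real work — and the only place where an error could creep in — to be the exhaustive transition check: one must be careful to capture \emph{all} transitions, not merely the obvious ones, including those that pass through or near the two attached squares, and to confirm that the eight-vertex diagram $\mathscr D_0$ drawn above indeed lists exactly these arrows. This is the same step, in spirit, as the verification sketched after Figure \ref{D0} for the hexagon, only with a larger polygon and more sides to inspect.
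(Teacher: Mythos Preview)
Your proposal is correct and matches the paper's approach: the paper states this lemma immediately after the definition of admissibility with the two words ``by construction'', and your argument is precisely the natural unpacking of that phrase --- reduce to $\Sigma_0$ via the $\nu_i$ and $\pi_i$, then appeal to the fact that $\mathscr D_0$ was built by recording every geometrically possible transition for directions in $\Sigma_0$. Your only addition is the explicit mention of the exhaustive transition check, which the paper leaves implicit.
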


Let us first define the derivation for sequences admissible in
diagram $\mathscr D_0$.

\begin{defin}\label{derivation}
Let $w$ be a word in $\alfab$ admissible in diagram $\mathscr
D_0$. Its derived sequence is the sequence in the new alphabet
$\mathscr A'=\{A',B',C',D',E',F',G',H',I'\}$ obtained from $w$ by
interpolating it with the letters on the arrows of the following
diagram, changing $A,D,E,H$ with the correspondent primed and
dropping letters $B,C,F,G$.

\begin{align*}
\mathscr D_0 \qquad \qquad  \xymatrix{ \quad H \quad \ar@/^1pc/[r]
& \quad B \quad \ar@/^1pc/[l]^{G'} \ar@/^1pc/[d]
& \quad F \quad \ar@/^1pc/[l]_{B'} \ar@/^1pc/[r] & \quad D \quad \ar@/^1pc/[l]^{I'} \ar@/^1pc/[d] \\
\quad A \quad \ar@/^1pc/[u] & \quad G \quad \ar@/^1pc/[l]
\ar@/^1pc/[r]_{C'} &  \quad C \quad \ar@/^1pc/[l]^{F'}
\ar@/^1pc/[u] & \quad E \quad \ar@/^1pc/[l]}
\end{align*}
\end{defin}

In a similar way we can define derivation on words admissible in
the other diagrams.

\subsection{Veech element}

The main result that we have proved is the following:

\begin{theo}\label{main}
Let us consider a trajectory $\tau$ in $R_1$ in direction $\theta
\in \Sigma_0$ with cutting sequence $c(\tau)$. The derived
sequence of the cutting sequence $c(\tau)$ is still a cutting
sequence.
\end{theo}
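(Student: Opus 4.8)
The plan is to reproduce, almost word for word, the four–step proof of the Central Proposition~\ref{central}, the one structural difference being that the cut–and–paste map at the end does not return to a re-sheared copy of $R_1$ but to the second representation $R_2$; this is exactly the point anticipated in Definition~\ref{derivation} and in the discussion preceding it, where the derivation is described as the rule relating the cutting sequence in $R_1$ to the one in $R_2$. \emph{Step $0$, the affine element.} First I would make explicit the matrix $\gamma$ realizing the passage $R_1 \to R_2$. Starting from $R_1^\perp$ (Figure~\ref{octort}), whose two cylinder decompositions are orthogonal, one applies the Dehn–twist shear of one of the two decompositions — the analogue of the matrices $\begin{pmatrix} 1 & 2\cot(\pi/2n) \\ 0 & 1 \end{pmatrix}$ used for the $2n$–gons and of $\sigma$ in Section~\ref{first} — and then the cut–and–paste map $\Upsilon$ that reassembles the sheared pieces into $R_2^\perp$ (Figure~\ref{hexort}); shearing back to the $\tfrac{3\pi}{4}$–direction gives $R_2$. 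Composing with the reflection $r_2$ (to simplify the relabelling at the very end, exactly as we replaced $\sigma$ by $\gamma$ for the hexagon) produces $\gamma$ and an affine equivalence $\Psi_\gamma$ between the two representations of $\mathscr M(3,4)$; one checks it is a genuine element coming from the Veech group by exhibiting the cylinder moduli and noting that the associated diffeomorphism fixes the fake singularities, exactly as in~\cite{octagon}.

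\emph{Steps $1$ and $2$, augmenting and swapping roles.} In the polygons of $R_1$ I would mark the auxiliary diagonals that are the pre-images, under the shear above, of the sides of $R_2$; for a direction $\theta \in \Sigma_0 = [\tfrac{7\pi}{8},\pi]$ these are well placed, and recording the crossings of a trajectory with them yields an \emph{augmented cutting sequence} and an \emph{augmented transition diagram} $\tilde{\mathscr D}_0$ refining $\mathscr D_0$ by labels on the arrows — each transition of $\mathscr D_0$ either crosses a prescribed diagonal or not, and this is determined by the transition. Next, as in the second step for the hexagon, I would interchange the roles of the original sides and the auxiliary sides, with the four letters $A,D,E,H$ playing the double role that $A$ played for the hexagon (being themselves in auxiliary directions), obtaining a coding $\hat c(\tau)$ of $\tau$ with respect to the auxiliary edges and a diagram $\tilde{\mathscr D}_0'$, from which $\hat c(\tau)$ is obtained from $\tilde c(\tau)$ by erasing $B,C,F,G$.

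\emph{Steps $3$ and $4$, the cut–and–paste and the identification.} Applying $\Upsilon$ to pass to $R_2$, the key geometric remark is that, for $\theta \in \Sigma_0$, every side of $R_2$ is the positive–slope diagonal of a parallelogram whose sides are auxiliary edges; hence the cutting sequence $\overline c(\tau)$ of $\tau$ with respect to the sides of $R_2$ is read off from $\hat c(\tau)$ by augmenting with those diagonals and erasing the useless letters, and one draws the corresponding diagram $\mathscr D_0'$. Comparing $\tilde{\mathscr D}_0$ (on which $c(\tau)$ is read by keeping the nodes $A,D,E,H$ and the arrow labels) with $\mathscr D_0'$ (on which $\overline c(\tau)$ is read the same way), one sees that, up to the relabelling $A\leftrightarrow A',\dots,H\leftrightarrow H'$, the diagram $\mathscr D_0'$ is obtained from $\tilde{\mathscr D}_0$ by deleting from the arrow labels precisely the letters $B,C,F,G$; that is, $\overline c(\tau)=c(\tau)'$ in the sense of Definition~\ref{derivation}. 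Since $R_2$ is a bona fide polygonal representation of $\mathscr M(3,4)$, affinely equivalent to $R_1$ via $\Psi_\gamma$, the sequence $\overline c(\tau)$ is the cutting sequence of an honest trajectory on the surface, so $c(\tau)'$ is a cutting sequence; carrying out the mirror procedure on $R_2$ returns everything to $R_1$ and identifies the twice–derived sequence with the cutting sequence of a trajectory in $R_1$ again, the relevant round–trip matrix being an explicit element of the Veech group $\Delta(3,4,\infty)$. The analogues of Remarks~\ref{commutation} and~\ref{invariance} for $R_1$ (already recorded above) would then let one relax $\theta \in \Sigma_0$ to an arbitrary sector by conjugating with $\nu_i$.

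\emph{Main obstacle.} As for the hexagon, the delicate part is Steps~$3$ and~$4$: one must choose the auxiliary diagonals so that \emph{every} side of $R_2$ is inscribed in a parallelogram of auxiliary edges and none of them degenerates onto a single diagonal — precisely the failure that occurs for the square with the ``too small'' shear $\begin{pmatrix}1&1\\0&1\end{pmatrix}$ in Section~\ref{second}, which is why the $2\cot(\pi/2n)$–type shear is the right one — and then one has to verify, on the considerably larger polygons (an octagon with two squares against two semi-regular hexagons with two triangles) and their many sides, the purely combinatorial claim that the two diagrams agree after erasing exactly $B,C,F,G$. This is a finite but lengthy diagram chase, and the care lies in keeping the labelling consistent across the shear and the cut–and–paste; the rest of the argument is formal once Step~$0$ is in place.
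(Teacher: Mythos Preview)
Your overall strategy is right --- the derived sequence is indeed the cutting sequence of a trajectory in $R_2$ --- but Step~$0$ misidentifies the affine element, and this propagates. The passage $R_1\to R_2$ is \emph{not} realised by a Dehn-twist shear of the $2\cot(\pi/2n)$ type: the matrix the paper uses is a composition $\sigma=\sigma_3\sigma_2\sigma_1$ where $\sigma_1=\begin{pmatrix}1&1\\0&1\end{pmatrix}$ makes the two cylinder directions of $R_1$ orthogonal, $\sigma_2$ is a genuine \emph{diagonal} rescaling (the polygons in $R_2$ have side lengths $\sqrt{2\sqrt6/3}$ and $\tfrac{\sqrt2}{2}\sqrt{2\sqrt6/3}$, not~$1$), and $\sigma_3=\begin{pmatrix}1&\sqrt3/3\\0&1\end{pmatrix}$ shears the vertical to the direction $\pi/3$ of the $R_2$ decomposition --- \emph{not} back to $3\pi/4$, which would return you to $R_1$. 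A parabolic element appears only for the full round trip $R_1\to R_2\to R_1$, namely $\begin{pmatrix}1&2+\sqrt2\\0&1\end{pmatrix}$ (computed in the Future Works section); the half-step is not parabolic, so your description ``Dehn-twist shear then cut-and-paste then shear back to $3\pi/4$'' cannot produce $R_2$.

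Once the correct $\sigma$ is in place, the paper's argument is also shorter than your Steps~$1$--$4$ because of a structural observation you do not use: the orthogonalised presentations $R_1^\perp$ and $R_2^\perp$ are assembled from the \emph{same} collection of basic rectangles (the edges of the common grid graph) and differ only by the placement of two half-rectangles; they are directly translation-equivalent via a cut-and-paste $\Upsilon_S$. Hence the sides of $\Upsilon_S R_1=\tilde R_1$ \emph{are}, up to applying $\sigma$, the sides of $R_2$: there is no intermediate ``diagonal of a parallelogram of auxiliary edges'' step, and the degeneration worry you flag as the Main obstacle simply does not arise. The proof collapses to recording on $\mathscr D_0$ the crossings of the new sides introduced by $\Upsilon_S$ (the primed labels $B',C',F',G',I'$ of Definition~\ref{derivation}), keeping the shared sides $A,D,E,H$, and dropping $B,C,F,G$, which disappear under $\Upsilon_S$; this is exactly the derivation, and $\sigma$ carries $\tilde R_1$ onto $R_2$. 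Your four-step template from the hexagon is thus replaced here by what the paper itself calls the analogue of the hexagon's \emph{third} step alone.
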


We will now prove the theorem so that it describes how to pass
from cutting sequences in one representation to cutting sequences
in the other. We will hence find the deformations we need to apply
to pass from one representation to the other.

\begin{figure}[h]
\centering
\includegraphics[width=0.9\textwidth]{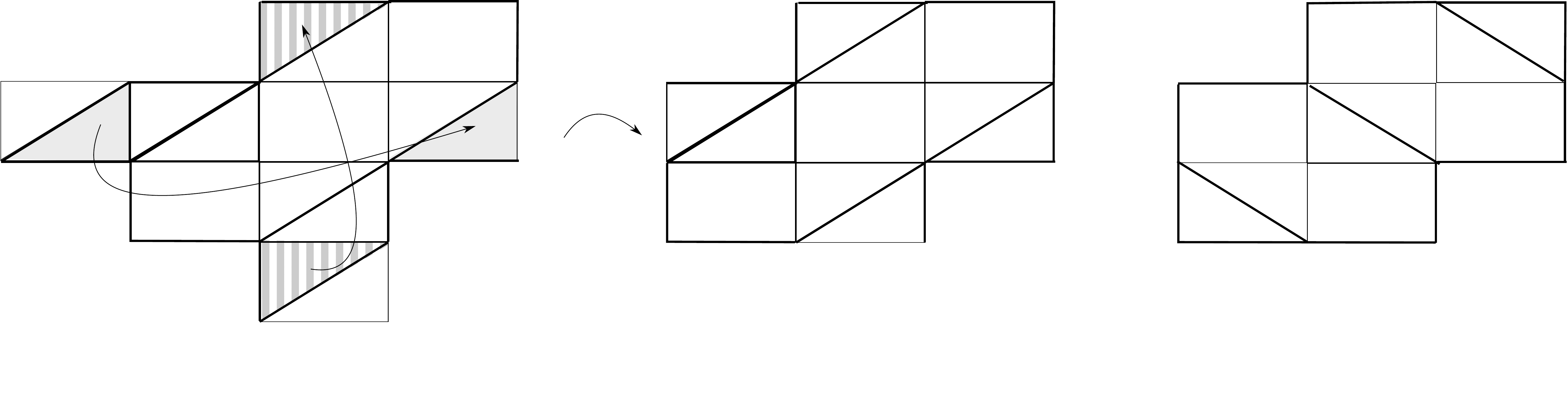}
\caption{The representations $R_1^\perp$ and $R_2^\perp$.} \label{ort}
\end{figure}

\begin{proof}

\emph{First step.} The key observation for finding out how to go
from one representation to the other is made by comparing the
polygonal decompositions $R_1^\perp$ and $R_2^\perp$, which were
obtained from the graph as explained. As one can easily see
looking at Figure \ref{ort}, $R_1^\perp$ and $R_2^\perp$ are the
same up to cutting and pasting two half rectangles, translation
equivalence that we will call $\tilde \Upsilon_S$. We call its
image $\tilde R_1^\perp$.

\begin{figure}[h]
\centering
\includegraphics[width=0.5\textwidth]{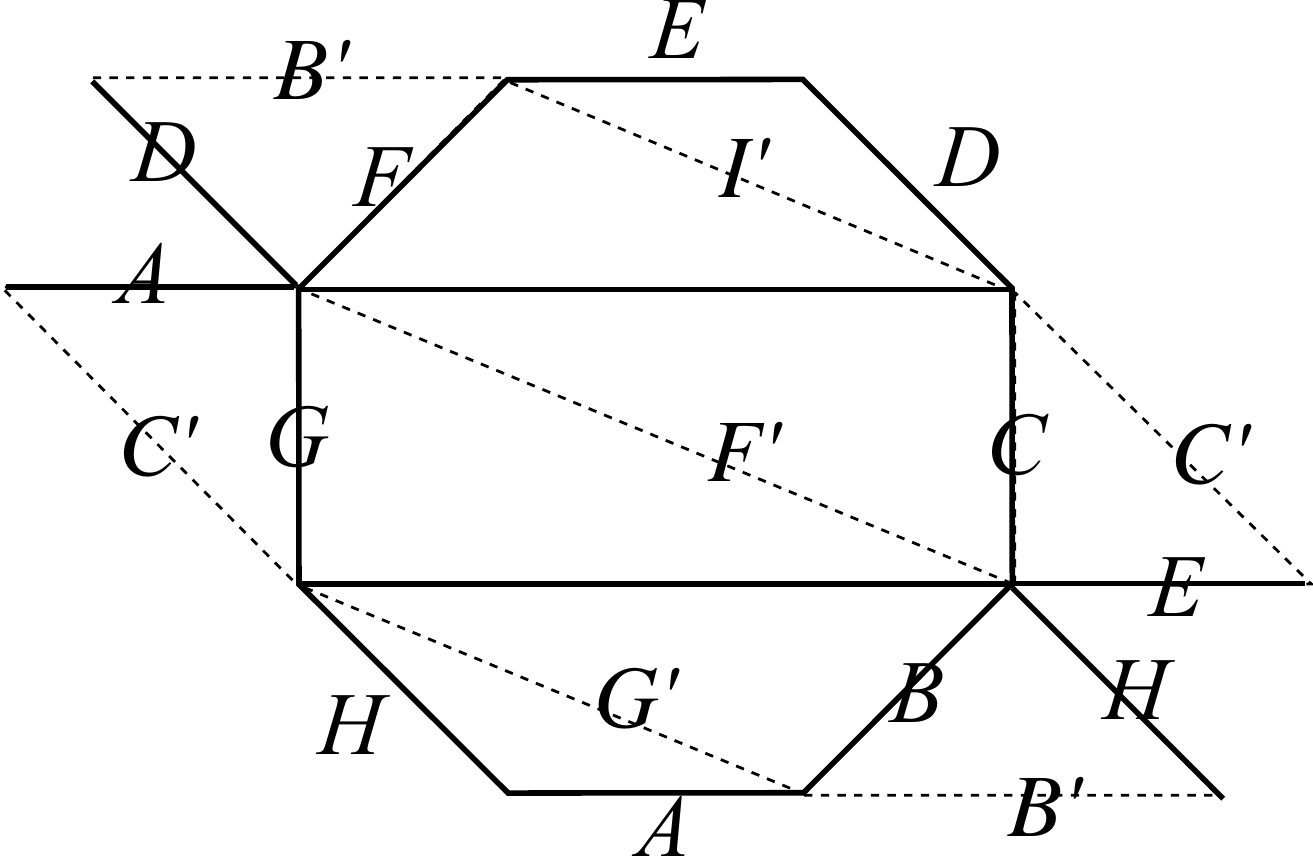}
\caption{$\Upsilon_S R_1=\tilde R_1$.} \label{oct}
\end{figure}

Naturally, we can apply the induced cut and paste to the
representation $R_1$ and we will call this map $\Upsilon_S$,
obtaining a representation $\tilde R_1$. We remark that this map
has in fact the same role as $\Upsilon_{S_E}$ in the previous
section. Moreover, this figure allows us to see how $R_2$ can be
obtained by shearing $R_1$ after applying $\Upsilon_S$.

\emph{Second step.}  On cutting sequences, $\Upsilon$ act by
adding some necessary sides that we will call auxiliary sides from
now on (see Figure \ref{oct}) and dropping some others. We can
hence record in the diagram of admissible transitions the crossing
of auxiliary sides. It is clear that some sides are shared from
the two configurations, precisely $A,D,E,H$, some must be added
and are the one on the augmented diagram (\ref{derivation}) and
some more must be eliminated, because they are not part of the
sides of the new configuration, precisely $B,C,F,G$. This step is
equivalent to the third step of the proof of the similar
proposition for the hexagon, when we applied the cut and paste to
the new hexagon and wrote the cutting sequence $\overline c(\tau)$
with respect to the new sides. It turned out to be, in fact,
exactly the derived sequence of the original cutting sequence.

\emph{Third step.} In the previous case the next step was to send
the sheared hexagon back in the previous one, by applying a Veech
element. In this case, we send the
configuration $\tilde R_1$ to $R_2$ instead.

To send $\tilde R_1$ to $R_2$, we apply three different
transformations and consider their composition $\sigma=\sigma_3
\circ \sigma_2 \circ \sigma_1$.

\begin{itemize}
\item The first one, $\sigma_1$ will bring $\tilde R_1$ in $R_2^\perp$,
easier to compare with $R_2^\perp$ and hence with $R_2$.

Since the two decompositions were in directions $0$ and $\frac{3
\pi}{4}$ we choose a shear to make them orthogonal, given by the
matrix
\[
\sigma_1=\begin{pmatrix} 1 & 1 \\ 0 & 1 \end{pmatrix}.
\]

\item The second one in to adjust lengths and obtain from $R_1^\perp$
the configuration $R_2^\perp$. We want it to act as in Figure
\ref{length} and hence we use the matrix:
\[
\sigma_2=\begin{pmatrix} \frac{\sqrt 2}{2}\sqrt{\frac{2 \sqrt
6}{3}} & 0 \\ 0 & \frac{\sqrt 3}{2}\sqrt{\frac{2 \sqrt 6}{3}}
\end{pmatrix}.
\]

\begin{figure}[ht]
\centering
\includegraphics[width=0.7\textwidth]{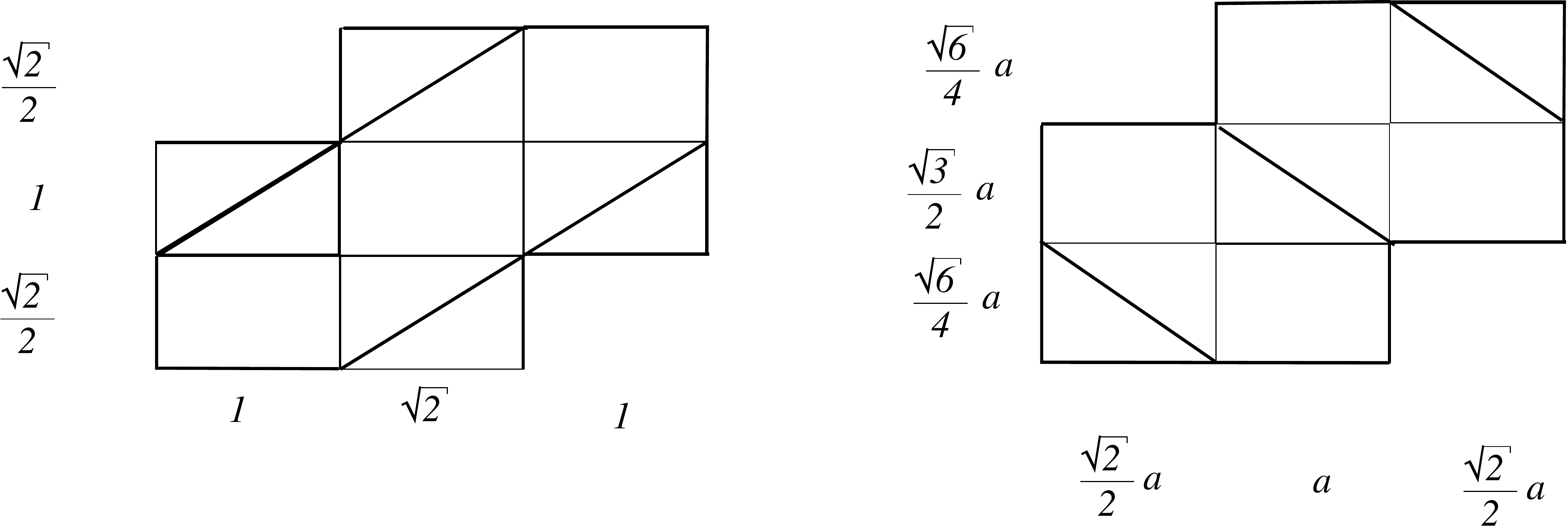}
\caption{The lengths comparison between $R_1^\perp$ and
$R_2^\perp$.} \label{length}
\end{figure}

\item The third one will send $R_2^\perp$ back in the original configuration
$R_2$. It hence sends vertical directions in the directions of the
cylinder decompositions in $R_2$ corresponding to the ones in
$R_1$. Those decompositions are the ones in direction $0$ and the
one in direction $\frac{\pi}{3}$ as shown in Figure
\ref{BMhexagon}. The shear that sends vertical lines in lines in
direction $\frac{\pi}{3}$ is:
\[
\sigma_3=\begin{pmatrix} 1 & \frac{\sqrt 3}{3} \\ 0 & 1
\end{pmatrix}.
\]

\end{itemize}

Finally, we consider the composition of the three of them
\[
\xymatrix{ \tilde R_1 \ar@/_1pc/[rrr]_\gamma \ar[r]^{\sigma_1} &
R_1^\perp \ar[r]^{\sigma_2} & R_2^\perp \ar[r]^{\sigma_3} & R_2}
\]

which is given by the matrix

\[
\sigma= \sigma_3 \circ \sigma_2 \circ \sigma_1= \begin{pmatrix}
\frac{\sqrt 2}{2}\sqrt{\frac{2 \sqrt 6}{3}} &
\frac{1}{2}\sqrt{\frac{2 \sqrt 6}{3}}(1+\sqrt 2) \\
0 & \frac{\sqrt 3}{2}\sqrt{\frac{2 \sqrt 6}{3}}
\end{pmatrix}.
\]

Geometrically, the action in showed in Figure \ref{final}.

\begin{figure}[h]
\centering
\includegraphics[width=0.7\textwidth]{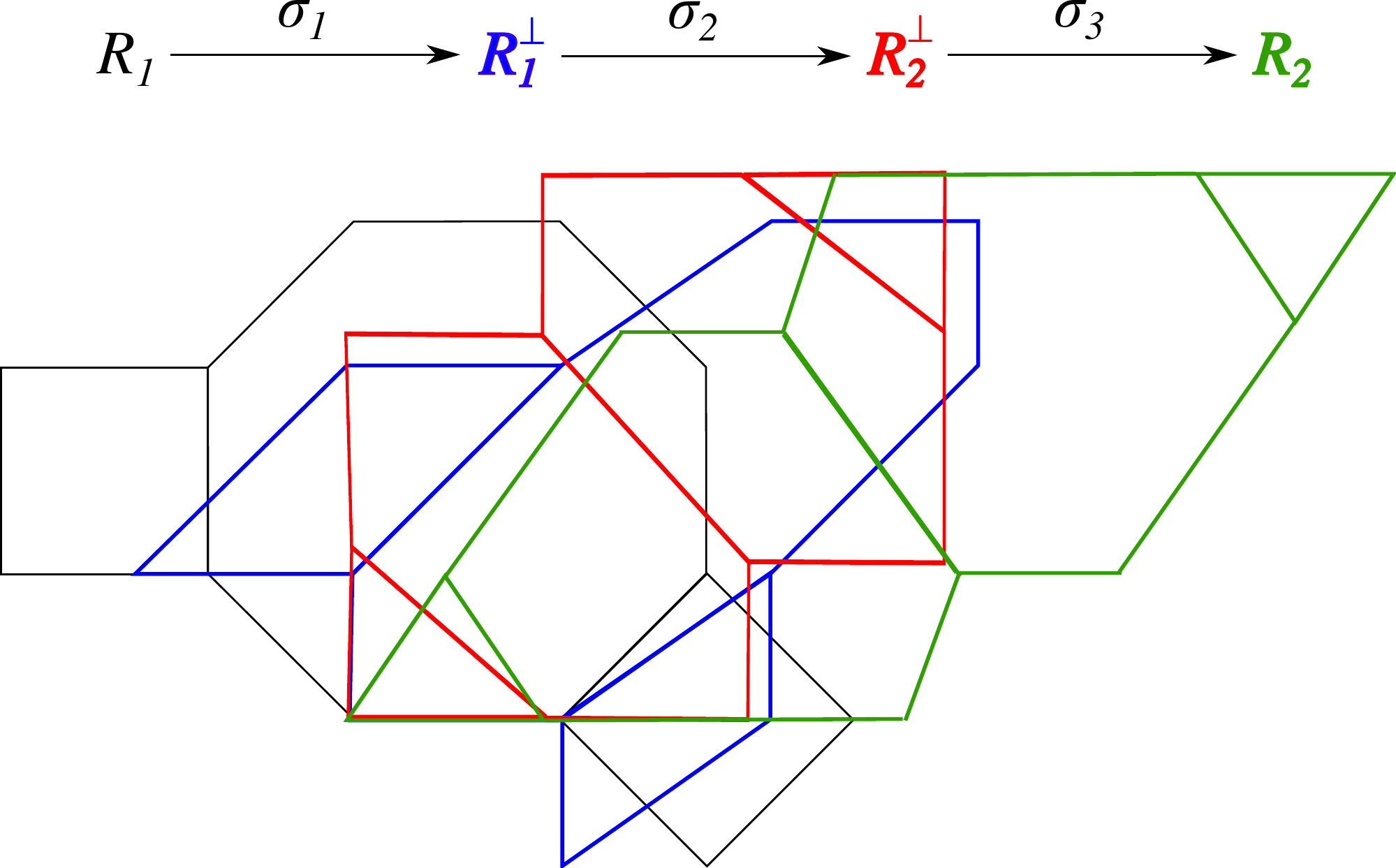}
\caption{The action of $\sigma$.}
\label{final}
\end{figure}

\emph{Fourth step} Since $R_1$ and $R_2$ represent the same
surface this must be in the Veech group of it. As explained for
the hexagon, we can consider the affine equivalence $\Psi_\sigma
\colon S \to S$ obtained by the composition of $\Upsilon \colon S \to S$
and the deformation $\Phi_\sigma \colon S \to S$. $\Psi_\sigma$ is
the affine equivalence induced by the Veech element.

\end{proof}

We actually proved something more precise than Theorem \ref{main}
that is the following proposition, similar to the one in Section \ref{central}:

\begin{prop}
Given a trajectory $\tau$ in $R_1$ in direction $\theta \in \Sigma_0$ with
cutting sequence $c(\tau)$, let us consider the trajectory $\tau'=
\Psi_\sigma \tau$ in $R_2$. We have that the derived sequence of the
original trajectory is the cutting sequence of the new trajectory.
This means
\[
c(\tau')=c(\tau)'.
\]
\end{prop}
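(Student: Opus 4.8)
The plan is to follow, step by step, the proof of the Central Proposition \ref{central} for the hexagon, now read through the two representations $R_1$ and $R_2$ of $\mathscr M(3,4)$ while keeping track of cutting sequences explicitly at each stage; the four steps of the proof of Theorem \ref{main} already carry out the geometry, so what remains is the symbolic bookkeeping, and the resulting statement is then strictly more precise than Theorem \ref{main}, which it implies at once. Throughout, $\tau$ has direction $\theta\in\Sigma_0$, $c(\tau)$ is its cutting sequence with respect to the sides of $R_1$ labelled as in Figure \ref{BMoctagon}, and the goal is to exhibit the derived sequence $c(\tau)'$, in the alphabet $\mathscr A'$ of Definition \ref{derivation}, as the cutting sequence with respect to the sides of $R_2$ of the image trajectory $\tau'=\Psi_\sigma\tau$.

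First I would record the crossings of the \emph{auxiliary sides} --- the edges that appear as sides of $\tilde R_1$ (equivalently of $R_2^\perp$, and hence of $R_2$) but not of $R_1$, namely the ones carrying primed labels in the augmented diagram of Definition \ref{derivation}. Exactly as in the first step for the hexagon, for a direction in $\Sigma_0$ each transition of $\mathscr D_0$ either crosses a unique such auxiliary side or crosses none, in a well-determined way, so the augmented sequence is a function of $c(\tau)$: this is precisely what the labelled diagram in Definition \ref{derivation} encodes. Next, as in the second step of both earlier proofs, I would pass to the coding with respect to the sides of $\tilde R_1$: the sides $A,D,E,H$ are common to $R_1$ and $\tilde R_1$ and are kept, the sides $B,C,F,G$ are not sides of $\tilde R_1$ and are dropped, and the auxiliary sides are kept. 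By construction of the cut-and-paste $\Upsilon_S$ (first step of the proof of Theorem \ref{main}), the resulting word is exactly the cutting sequence of $\Upsilon_S\tau$ with respect to the sides of $\tilde R_1$.

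Then I would apply the matrix $\sigma=\sigma_3\circ\sigma_2\circ\sigma_1$, which sends $\tilde R_1$ to $R_2$ (third step of the proof of Theorem \ref{main}). Since a $GL(2,\R)$ deformation carries trajectories to trajectories and sides to sides, $\sigma$ identifies that word, after relabelling each side of $\tilde R_1$ by its $\sigma$-image, with the cutting sequence of $\sigma\Upsilon_S\tau$ with respect to the sides of $R_2$. The content of this step is the finite check --- the analogue of the fourth step for the hexagon, where one compares $\tilde{\mathscr D_0}$ with $\mathscr D_0'$ --- that under the image sides the primed auxiliary labels become the sides $A',\dots,I'$ of $R_2$ and that the rule of Definition \ref{derivation} (drop $B,C,F,G$, prime $A,D,E,H$) exactly reproduces the transition diagram of $R_2$ in the sector to which $\sigma$ maps $\Sigma_0$. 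Granting this, the relabelled word is literally $c(\tau)'$.

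Finally, since $R_1$ and $R_2$ represent the same Veech surface $S$, the matrix $\sigma$ lies in the Veech group, there is a cut-and-paste (translation equivalence) from $R_2$ back onto $R_1$, and $\Psi_\sigma$ is the induced affine automorphism with derivative $\sigma$ (fourth step of the proof of Theorem \ref{main}); $\tau'=\Psi_\sigma\tau$ is the trajectory thereby obtained, read in $R_2$. Because a translation equivalence leaves cutting sequences unchanged, the cutting sequence of $\tau'$ with respect to the sides of $R_2$ (with the labelling transported by $\Psi_\sigma$) equals the cutting sequence of $\sigma\Upsilon_S\tau$ with respect to $R_2$, which by the previous paragraph is $c(\tau)'$; hence $c(\tau')=c(\tau)'$. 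The main obstacle is the combinatorial verification in the third step: computing, from the precise geometry of $R_1$ and the positions of the auxiliary edges, which auxiliary side each transition of $\mathscr D_0$ crosses for $\theta\in\Sigma_0$, and matching the resulting diagram against that of $R_2$. One must also be careful --- as with the fact that the first branch of the Farey map is the identity, cf.\ Remark \ref{contimage} --- that $\sigma$ sends the directions of $\Sigma_0$ into exactly the sector of $R_2$ whose transition diagram is being used, so that the correct pair of diagrams is being compared.
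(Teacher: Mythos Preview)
Your proposal is correct and follows essentially the same route as the paper: the paper's own proof of this proposition is simply the one-line remark that it ``follows immediately from the last step of the proof of the theorem,'' and what you have written is precisely an unpacking of the four steps of that proof of Theorem \ref{main} with the symbolic bookkeeping made explicit. One small wording issue: in your final paragraph you speak of ``a cut-and-paste (translation equivalence) from $R_2$ back onto $R_1$,'' but no such return map is used here---$\tau'$ is read in $R_2$, and the only translation equivalence in play is $\Upsilon_S\colon R_1\to\tilde R_1$; the identification $\sigma\Upsilon_S\tau=\Psi_\sigma\tau$ already gives $c(\tau')=c(\tau)'$ without going back to $R_1$.
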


The proof follows immediately from the last step of the proof of
the theorem.

\subsection{Future works}

To obtain a complete characterization or at least a complete
necessary condition, we need to do something more. The next step
will be in fact to do the same thing for the configuration $R_2$,
finding out how to shear, scale and shear it again to obtain a
sheared $R_1$ that can be sent back in the original configuration
so that the procedure can start again and we can affirm that a
cutting sequence is infinitely derivable, defining the derivation
as alternating the one for $R_1$ and the one we will give for
$R_2$.

The idea we are following is to consider the symmetric cylinder
decomposition in direction $\frac{2 \pi}{3}$ in $R_2$ together
with the horizontal one and apply the same procedure. The first thing is to shear $R_2$ so that the two cylinder decompositions become orthogonal. Then we are
planning to modify lengths again to return to the configuration
where the cylinder decompositions symmetric to the ones we
considered until now in $R_1$ (which means, the horizontal one and the one
in direction $\frac{\pi}{4}$) are orthogonal. Finally, we want to
shear it again to a sheared representation $R_1$.

Applying the correspondent matrices of the action we just
explained gives a shear:
\begin{multline*}
\gamma= \begin{pmatrix} 1 & 1 \\ 0 & 1 \end{pmatrix}
\begin{pmatrix} \sqrt 2 \sqrt{\frac{2 \sqrt 6}{3}} & 0 \\ 0 &
\frac{2 \sqrt 3}{3} \sqrt{\frac{2 \sqrt 6}{3}} \end{pmatrix}
\begin{pmatrix} 1 & \frac{\sqrt 3}{3} \\ 0 & 1 \end{pmatrix} \cdot \\
\cdot \begin{pmatrix} 1 & \frac{\sqrt 3}{3} \\ 0 & 1 \end{pmatrix}
\begin{pmatrix} \frac{\sqrt 2}{2} \sqrt{\frac{2 \sqrt 6}{3}} & 0 \\ 0 &
\frac{\sqrt 3}{2} \sqrt{\frac{2 \sqrt 6}{3}} \end{pmatrix}
\begin{pmatrix} 1 & 1 \\ 0 & 1 \end{pmatrix}
=\begin{pmatrix} 1 & 2+\sqrt 2 \\ 0 & 1 \end{pmatrix}
\end{multline*}
which has as argument exactly the modulus of all the three
cylinders in the horizontal decomposition. This is in fact the
shear that Diana Davis is using in her works about Bouw-M\"oller
surfaces. However, her method does not consider the two
representations of Bouw-M\"oller surfaces and try to pass directly
again to the same representation. ~In this way, she cannot cover all sectors of directions of trajectories. Even if our method seems to
result a bit longer because we have a intermediate step, each step
turns out to be much easier passing for the second
representations, more similar to the one used in \cite{octagon}
and more useful to understand what is happening in the
Teichm\"uller disk of deformations.

The next step will be to find the position of the sides of the
sheared $R_2$ and record them in the diagram of admissible
transitions.

The reason for choosing the numeration of sectors as we did is
that in this way, as we had in the previous cases, the natural
range $\Sigma_0$ of directions for $R_1$ is send in the new
configuration in a bigger interval of directions: it is sent in
$[\frac{2 \pi}{3}, \pi]$ which as range double of the natural
angle for this configuration, which is $\frac{\pi}{6}$, to have
the transition diagram. We will hence need some way of normalizing
the trajectory and send them back in the new $\Sigma'_0$. In fact,
for $R_2$ we will divide the range of possible directions
$[0,2\pi]$ in 12 intervals $\Sigma'_i=[\pi -\frac{(i+1)
\pi}{6},\pi -\frac{i \pi}{6}]$ for $i=0, \dots, 11$ and
trajectories in $\Sigma_0$ will be sent in trajectories either in
$\Sigma_0$ or $\Sigma_1$.

Other future plans are to write down the two equivalent of the
Farey maps and describe how to apply the two alternating between
them as we alternate configurations.

Moreover, it would be meaningful to try to find out what is
happening in the Teichm\"uller disk while applying the matrices
described before (naturally, all of them are in $SL(2,\R)$ because
they need to preserve areas). We would expect to have a tree
equivalent to the one for the other case, but bipartite, with
vertices with $m$ edges and vertices with $n$ edges corresponding
to the different configurations.

Finally, we will try to understand how to generalize this method
to a general Bouw-M\"oller surface.

\clearpage
\addcontentsline{toc}{section}{\refname}
\bibliographystyle{alpha}
\bibliography{biblio}
\nocite{*}



\end{otherlanguage}
\end{document}